\documentclass[11pt,reqno]{amsart}
\usepackage{amsmath,amsthm,amsfonts,amssymb,thmtools,mathrsfs,dsfont,faktor,tikz,float,subcaption,quiver,enumerate,mathtools,booktabs,lipsum,xparse,graphicx,epstopdf,comment,indentfirst,bm,yhmath,thm-restate}
\usepackage[dvipsnames]{xcolor}
\usepackage[utf8]{inputenc}

\usepackage[
  backend=biber,
  sorting=nyt,
  url=false,  
  isbn=false, 
  doi=true, maxnames=99    
]{biblatex}

\usepackage[boxsize=13pt,centertableaux]{ytableau}

\usepackage{hyperref}
\usepackage[nameinlink]{cleveref}
\hypersetup{colorlinks=true,linkcolor=Magenta,citecolor=Magenta}

\usepackage[paperheight=11in,
   paperwidth=8.5in,
   top=10mm,
   bottom=20mm,
   left=26mm,
   right=26mm]{geometry}

\usepackage{mathpazo} 
\usepackage[skip=2pt plus2pt minus1pt, indent]{parskip} 
\AtBeginDocument{
  \setlength{\abovedisplayskip}{6pt plus 2pt minus 2pt}
  \setlength{\belowdisplayskip}{6pt plus 2pt minus 2pt}
  \setlength{\abovedisplayshortskip}{0pt plus 2pt}
  \setlength{\belowdisplayshortskip}{3pt plus 2pt minus 2pt}} 

\graphicspath{{graphics/}}  
\usetikzlibrary{arrows,trees,positioning}

\usepackage{listings}
\definecolor{codegreen}{rgb}{0,0.6,0}
\definecolor{codegray}{rgb}{0.5,0.5,0.5}
\definecolor{codepurple}{rgb}{0.58,0,0.82}
\definecolor{backcolour}{rgb}{0.97,0.97,0.97}
\lstdefinestyle{mystyle}{
    backgroundcolor=\color{backcolour},   
    commentstyle=\color{codegray},
    keywordstyle=\color{blue},
    numberstyle=\tiny\color{codegray},
    basicstyle=\ttfamily,
    breakatwhitespace=true,         
    breaklines=true,                 
    numbers=left,                    
    numbersep=5pt,                  
}
\usepackage{amsmath, amssymb, amsthm}
\usepackage{geometry}
\usepackage{microtype}

\newtheorem{thm}{Theorem}[section]
\newtheorem{lem}[thm]{Lemma}

\theoremstyle{definition}
\newtheorem{defn}[thm]{Definition}

\newtheorem{conj}[thm]{Conjecture}
\theoremstyle{plain} 
\newtheorem{rem}[thm]{Remark}

\numberwithin{equation}{section}

\newcommand{\ZZ}{\mathbb{Z}}
\newcommand{\QQ}{\mathbb{Q}}

\newcommand{\CC}{\mathbb{C}}

\newcommand{\PP}{\mathbb{P}}
\DeclareMathOperator{\lcm}{lcm}

\DeclareMathOperator{\gal}{Gal}
\DeclareMathOperator*{\res}{res}
\DeclareMathOperator{\rank}{rank}

\title{On the existence of hyperelliptic curves over $\mathbb{Q}(T)$ with certain Jacobian ranks}

\author{Lucas Chen}
\address{Department of Mathematics, University of Chicago}
\email{lucasch@uchicago.edu}

\author{Joshua Im}
\address{Department of Mathematics, Texas A\&M University}
\email{jri@tamu.edu}

\author{Steven J. Miller}
\address{Department of Mathematics, Williams College}
\email{sjm1@williams.edu}

\author{Devayani Pradhan}
\address{Department of Mathematics, University of Michigan}
\email{pradhand@umich.edu}

\date{}

\begin{document}

\begin{abstract}
    Let $y^2 = f(x,T)$ be a hyperelliptic curve of genus $g\geq 1$, defined over $\QQ(T)$. We prove the existence of infinitely many imaginary hyperelliptic curves with a fixed genus $g$ having a certain rank for $5\leq r\leq 4g+2$, and a similar result for real hyperelliptic curves with a fixed genus $g$ having a certain rank for $6\leq r\leq 4g+4$. We begin by constructing such curves and prove the rank using two methods. First, we apply the generalized Nagao's conjecture, which relates the first moment and the rank of the Jacobian variety $J_\mathcal{X}(\mathbb{Q}(T))$, and that the conjecture holds for our curves, making the result unconditional. Furthermore, we explicitly construct rational points in the Mordell-Weil group and use Shioda-Tate to prove that the rank is equal to $r$.
\end{abstract}
\maketitle

\tableofcontents 
\newpage

\section{Introduction}

Suppose we have an elliptic curve, $E$, over $\QQ$ defined by a Weierstrass equation
\begin{equation}
    y^2 \ = \ x^3+Ax+B
\end{equation}
with $A,B\in \ZZ$ and non-zero discriminant. The Mordell-Weil Theorem states that the rational solutions, $E(\QQ)$, form a finitely generated abelian group, called the Mordell-Weil group. Then writing
\begin{equation}
    E(\mathbb{Q}) \ \cong \ E(\mathbb{Q})_{\text{tors}} \oplus \mathbb{Z}^r,
\end{equation}
we define $r$ to be the algebraic rank of the elliptic curve. 

The structure of the torsion subgroup is completely classified. Mazur \cite{mazur1977modular} proved the torsion theorem, which states that there are exactly fifteen possible group structures of the torsion subgroup. However, the rank of the curve is not fully understood and is a source of much interest to mathematicians. A central question is whether the rank can be arbitrarily large. Breakthrough work by Bhargava and Shankar \cite{bhargava2015average} proved unconditionally that the average rank of all elliptic curves over $\mathbb{Q}$ (ordered by height) is bounded from above by $\frac{7}{6}$, and that a positive proportion of curves have rank $0$.

Finding elliptic curves with high rank has been studied extensively. The current record is due to Noam Elkies, who in 2006 discovered an elliptic curve with rank at least $28$; recent computational work by Klagsbrun, Sherman, and Weigandt \cite{klagsbrun2016elkies} established that, subject to the Generalized Riemann Hypothesis (GRH), the Mordell--Weil rank of this specific Elkies curve is exactly 28. At the same time, a prominent probabilistic model developed by Park, Poonen, Voight, and Wood \cite{park2019heuristic} models the ranks and Shafarevich--Tate groups of elliptic curves simultaneously, conjecturing that there are only finitely many elliptic curves over $\mathbb{Q}$ of rank greater than $21$.

The Birch and Swinnerton-Dyer (BSD) conjecture suggests a connection between the algebraic rank $r$ and the analytic properties of the Hasse--Weil $L$-function $L(E/\mathbb{Q}, s)$. Specifically, it asserts that the order of vanishing of $L(E/\mathbb{Q}, s)$ at the central point $s = 1$ (the analytic rank) is exactly equal to the algebraic rank. There exist many other conjectures to determine the rank of an elliptic curve. For a one-parameter family of elliptic curves, Nagao's conjecture relates the rank to a limit over the first moments of the Frobenius trace. Rosen and Silverman \cite{rosensilverman} proved this conjecture under the assumption of Tate's conjecture, which is true in rational elliptic surfaces.

In this paper, we examine what happens in higher genus families, that is, for hyperelliptic curves. Suppose we have $\mathcal{X}: y^2 = f(x,T),$ a hyperelliptic curve of genus $g \ge 1$ defined over $\mathbb{Q}(T)$. To define the rank here, we consider the Jacobian variety, $J_\mathcal{X}$, which is an abelian variety of dimension $g$. By the Lang--N\'eron theorem (the function-field analogue of the Mordell--Weil theorem), the group of rational points on the Jacobian, $J_{\mathcal{X}}(\mathbb{Q}(T))$, is finitely generated. We define the Jacobian rank (the rank of a hyperelliptic curve) analogously as the number of free generators of this group. Here, Nagao's conjecture has been generalized by Hindry and Pacheco \cite{hindrypacheco} where they pass to a smooth, irreducible projective fibered surface over $\mathbb{Q}$. Even for hyperelliptic curves, the generalized Nagao conjecture has been proven in certain cases.

While the generalized Nagao conjecture allows us to achieve conditional results, we can do better by explicitly constructing free generators in the Mordell-Weil group. In particular, we explicitly construct linearly independent rational divisors and show they generate by bounding the rank from above using the Shioda-Tate formula. This turns into a problem in intersection theory. The standard techniques to construct rational points and bound rank have been established by \cite{CoxZucker1979}.

Hyperelliptic curves of specific ranks were constructed using the generalized Nagao's conjecture in \cite{19smallnagao}. 
\begin{rem}
    We note that there is an error in \cite{19smallnagao} because their surface has singular fibers and their adapted conjecture is incorrect. Thus, the curve which is intended to have rank $2g+1$ actually has rank $2g$. 
\end{rem}
We follow their methods here, but we account for these corrections. We also use the methods from \cite{CoxZucker1979} and the Shioda-Tate formula to verify the ranks of the curves that we construct.

\subsection{Results}

Our main results guarantee the existence of infinitely many hyperelliptic curves with a fixed $g$ and a fixed Jacobian rank $r$. That is, for numerous $(g,r)$, there exist infinitely many hyperelliptic curves $\mathcal{X}$ of genus $g$ such that the Jacobian variety $J_\mathcal{X}(\QQ(T))$ has rank $\rank J_\mathcal{X}(\QQ(T))=r$. As we follow the methods in \cite{19smallnagao}, our results are natural generalizations of their results.

For imaginary hyperelliptic curves with genus $g$, we categorize the Jacobian ranks by size:
\begin{itemize}
    \item Low ranks: $0\leq r\leq 2g+1$ 
    \item Mid ranks: $2g+1\leq r\leq 4g+2$
    \item High ranks: $r\geq 4g+2$
\end{itemize}

For real hyperelliptic curves, $2g+1$ and $4g+2$ are replaced by $2g+2$ and $4g+4$, respectively. We show the infinitude of real and imaginary hyperelliptic curves for most of the low and mid-level ranks. For the low-level ranks, we summarize our results in \ref{lowrankthm} and \ref{lowrankthmreal}.

\begin{restatable}[Low Ranks I]{thm}{lowrankthm}\label{lowrankthm}
    For any $5\leq r\leq 2g-1$, there exist infinitely many imaginary hyperelliptic curves $\mathcal{X}$ defined over $\mathbb{Q}(T)$ with genus $g\geq 3$ such that the Jacobian variety $J_{\mathcal{X}}(\mathbb{Q}(T))$ has rank $r$.
\end{restatable}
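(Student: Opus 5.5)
We will construct an explicit family, compute the rank from below with explicit points, and match it from above using Shioda–Tate; the generalized Nagao conjecture serves as a cross-check. For fixed $g\ge 3$ and $5\le r\le 2g-1$, take an imaginary (odd-degree) model
\[
\mathcal{X}:\ y^2 \;=\; f(x,T) \;=\; x^{2g+1} + T\,\prod_{i=1}^{r}(x-a_i) \;+\; T^2 h(x),
\]
or more simply $y^2 = x^{2g+1}+ T^2 A(x) + T B(x) + C(x)^2$ with $\deg$ in $T$ at most $2$. Here $a_1,\dots,a_r\in\ZZ$ are distinct, and $h$ (resp.\ $A,B,C$) are chosen so that for each $i$ the polynomial $f(a_i,T)$ is a perfect square $c_i(T)^2$ in $\QQ[T]$. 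This yields $r$ rational points $P_i=(a_i,c_i(T))$ on $\mathcal{X}(\QQ(T))$, and hence $r$ divisors $P_i-\infty$ in $J_{\mathcal{X}}(\QQ(T))$. Since $r\le 2g-1<2g+1$, there is enough freedom in the coefficients to impose these square conditions while keeping $f$ separable in $x$ over $\QQ(T)$. We also want the surface to have only mild singular fibers, so that the correction noted in the Remark does not arise. Infinitely many curves come from varying the integer parameters $a_i$ (and a scaling parameter) in a Zariski-dense set; non-isomorphism over $\QQ(T)$ follows by comparing discriminants or Igusa-type invariants.

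For the \textbf{lower bound}, we show that $P_1-\infty,\dots,P_r-\infty$ are independent. One route is to specialize at a good value $T=t_0$ and use the injectivity of specialization (Silverman's theorem for abelian varieties) together with a nondegenerate height-pairing determinant computed at $t_0$. A cleaner route, which we prefer, computes the Néron–Tate/Shioda height pairing on the fibered surface as intersection numbers, following \cite{CoxZucker1979}, and checks that the resulting $r\times r$ Gram matrix is nonsingular.

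For the \textbf{upper bound}, we apply Shioda–Tate: $\rank J_{\mathcal{X}}(\QQ(T)) = \rho(S) - 2 - \sum_v (m_v-1)$ over $\QQ$. We compute $\rho$ of the smooth model $S$ via the $\QQ$-rational part of $H^2$. With $f$ quadratic in $T$, the surface is rational or has small geometric genus, so the Néron–Severi group is generated by the zero section, the fiber, fiber components, and the lines/curves $x=a_i$. The key input is that the Galois action over $\QQ$ on the geometric components has orbits numbering exactly $r$ beyond the trivial ones. Alternatively, and as a check, we evaluate the first moment $A_1(p)=\frac1p\sum_{t}a_t(p)$, where $a_t(p)=-\sum_{x}\left(\frac{f(x,t)}{p}\right)$. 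Summing over $t$ first reduces each $x$-contribution to a quadratic character sum in $t$, which equals $-1$ times a Legendre symbol of the discriminant in $t$, unless that discriminant vanishes; in the degenerate case the sum is $p-1$. The discriminant vanishes exactly at $x=a_i$ (plus $\infty$-type contributions we arrange to be absent), so $-A_1(p)\to r$ on average. The generalized Nagao conjecture of \cite{hindrypacheco} then gives rank $r$, and for such rational surfaces it is known unconditionally (Rosen–Silverman style via Tate's conjecture for rational surfaces).

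The \textbf{main obstacle} is arranging the degenerate points correctly: the quadratic-in-$T$ discriminant must vanish at exactly the $r$ chosen $x$-values, with no extra rational roots, no double roots, and no contribution at infinity, while keeping the fibers of the surface mild enough that the Nagao/Shioda–Tate correction terms vanish. The constraint $r\ge 5$ presumably arises because smaller $r$ forces extra factors or $g$-dependent degenerate terms. We will first try making the discriminant $B(x)^2-4A(x)C'(x)$ equal to $\prod_{i}(x-a_i)$ times an irreducible polynomial with no rational roots. We then verify irreducibility by Eisenstein's criterion for generic choices of parameters, and use Hilbert irreducibility to obtain infinitely many valid choices.
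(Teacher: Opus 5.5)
Your proposal is a research plan rather than a proof. It never fixes a family, and the steps that actually determine the rank are either misstated or deferred (``we arrange to be absent'', ``we will first try''). Three concrete problems:

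(i) In your first model, $f(a_i,T)=a_i^{2g+1}+h(a_i)T^2$. For $a_i\neq 0$ this is a square in $\QQ[T]$ only if $h(a_i)=0$ and $a_i$ is a rational square. So $h=k(x)\prod_i(x-a_i)$. For $k\neq 0$ the $T$-discriminant $\prod_i(x-a_i)\bigl(\prod_i(x-a_i)-4x^{2g+1}k(x)\bigr)$ has many roots besides the $a_i$. Moreover, at $x=a_i$ the polynomial in $t$ is the constant $a_i^{2g+1}$, not a degenerate quadratic.

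(ii) The character-sum step is misquoted. By Lemma \ref{quadlegendre}, $\sum_t\left(\frac{A(x)t^2+B(x)t+C(x)}{p}\right)$ equals $-\left(\frac{A(x)}{p}\right)$ or $(p-1)\left(\frac{A(x)}{p}\right)$: the symbol is of the leading coefficient, not of the discriminant. Hence each zero $\theta$ of $D$ contributes $\left(\frac{A(\theta)}{p}\right)$. Its average over $p$ is $1$ or $0$ according to whether $A(\theta)$ is a square in $\QQ(\theta)$, so ``irreducible with no rational roots'' is not the relevant condition. You would also need $\sum_x\left(\frac{A(x)}{p}\right)=O(\sqrt p)$.

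(iii) The bad-fiber correction, which is exactly what decides the answer, is left open. In the paper's family $y^2=Tf(x)+1$, the fiber $y^2=1$ at $T=0$ is reducible and the naive point count there is off by $p$. This is precisely the $-1$ in $\operatorname{rank}=\#\{\text{irreducible factors of } f\}-1$.

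On the Shioda--Tate side, the generators of $NS$ and the count of Galois orbits are asserted rather than computed, and the lower-bound Gram matrix is never written down. So neither route, as stated, pins the rank to $r$.

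For comparison, the paper stays linear in $T$, so only Lemma \ref{linlegendre} is needed. Then $-A_p(\mathscr{X})$ equals the number of roots of $f$ mod $p$ minus one, up to $O(p^{-1/2})$. The three-primes theorem is used to pick $f=\prod_{i\le 3}(x^{q_i}-a_i^{q_i})$ times linear factors (and possibly $x^2+b^2$), with $\deg f=2g+1$ and average root count $r+1$. One of the $q_i$ is chosen to make the irreducibility and Galois-group argument work.

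Your first model does contain a member for which your Nagao route goes through: take $h=0$ and $a_i=c_i^2$ with distinct nonzero $c_i^2$. Then $y^2=x^{2g+1}+T\prod_{i=1}^r(x-c_i^2)$ has the following properties:
\begin{itemize}
\item Its total space is rational, since $T=(y^2-x^{2g+1})/\prod_i(x-c_i^2)$.
\item For $r\le 2g$, every finite fiber has $x$-degree $2g+1$, so its right-hand side is never a constant times a square. Hence the naive sums at bad fibers are $O(\sqrt p)$, like the true traces.
\item Lemma \ref{linlegendre} gives $\sum_t\sum_x\left(\frac{x^{2g+1}+t\prod_i(x-c_i^2)}{p}\right)=p\sum_i\left(\frac{c_i^{4g+2}}{p}\right)=rp$, i.e.\ $-A_p(\mathscr{X})=r+O(p^{-1/2})$.
\end{itemize}
That would be a simpler construction than the paper's, but it is not the argument you give. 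As written, the proposal has a gap at its central step.
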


\begin{restatable}[Low Ranks II]{thm}{lowrankthmreal} \label{lowrankthmreal}
    For any $6\leq r\leq 2g$, there exist infinitely many real hyperelliptic curves $\mathcal{X}$ defined over $\mathbb{Q}(T)$ with genus $g\geq 3$ such that the Jacobian variety $J_{\mathcal{X}}(\mathbb{Q}(T))$ has rank $r$.
\end{restatable}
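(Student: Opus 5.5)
We will prove \Cref{lowrankthmreal} by an explicit construction followed by a two-sided rank bound. The plan is to take the real model
\[
\mathcal{X}_{\mathbf a}:\ y^2 \ = \ f(x,T) \ = \ T^2 + T\,h_1(x) + h_2(x) \quad\text{or}\quad y^2 \ = \ \prod_{i=1}^{r}(x-a_i)\cdot u(x) + T^2 v(x)^2,
\]
where $\deg f = 2g+2$ and the $a_i\in\QQ$ are distinct. The simplest concrete choice is
\[
f(x,T) \ = \ x^{2g+2} + T\,q(x) + T^2 \qquad\text{or}\qquad f(x,T) \ = \ p(x)^2 + T\prod_{i=1}^{r}(x-a_i)\, s(x),
\]
with $p$ of degree $g+1$ and $s$ chosen so that $\deg f=2g+2$ and $f$ is squarefree in $x$. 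The second form has the feature that for each root $a_i$ the points $P_i=(a_i,\pm p(a_i))$ are $\QQ(T)$-rational. Together with the two points at infinity of the real model, these give the divisor classes $D_i=[(a_i,p(a_i))-\infty_+]$ for $1\le i\le r$. Varying the $a_i$ (and $p$, $s$) over a Zariski-dense set of parameters will give the infinitely many pairwise non-isomorphic curves; non-isomorphism can be detected, for instance, by the discriminant or Igusa-type invariants as functions of the parameters.

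\textbf{Lower bound.} To show the $D_i$ are independent in $J_\mathcal{X}(\QQ(T))$, we specialize $T=t_0$ and use that specialization is injective for all but finitely many $t_0$ (Silverman's specialization theorem for abelian varieties over $\QQ(T)$). It then suffices to find a single good fiber on which the height pairing matrix of the images is nonsingular. Alternatively, we compute the canonical height pairing on the surface directly from intersection numbers in the style of \cite{CoxZucker1979}. In that computation the $D_i$ are sections meeting only at the finitely many places where $\prod(x-a_i)s(x)$ forces collisions, and we would show the resulting Gram matrix has full rank $r$.

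\textbf{Upper bound, the main obstacle.} We pass to a smooth relatively minimal model $\mathcal{S}\to\PP^1$ and apply Shioda--Tate:
\[
\rank J_\mathcal{X}(\QQ(T)) \ = \ \rho(\mathcal{S}) - 2 - \sum_{v}(m_v-1),
\]
where $m_v$ is the number of components of the fiber over $v$ and $\rho$ is the Picard number over $\QQ$. We would choose $f$ to be of low degree in $T$ (degree $1$ or $2$), so that $\mathcal{S}$ is rational or close to it. Then $\rho$ is computable: $\mathcal{S}$ is a blow-up of a conic bundle or of $\PP^1\times\PP^1$ at the base points of the pencil $p(x)^2+T\,g(x)=y^2$. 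We count the $\QQ$-rational exceptional curves and the Galois orbits of geometric components. The parameter $s$ is chosen so that the roots of $s$ are permuted transitively by Galois, and hence contribute only one class; this is exactly what cuts the rank from the geometric value $4g+4$ down to $r$. The delicate part is tracking the singular fibers, which is exactly the correction the Remark says \cite{19smallnagao} missed: at $T=\infty$ and at the $T$ where $f$ has a repeated root, reducible fibers reduce $\rho-2$. So we must resolve those fibers explicitly and count components, and this bookkeeping is where I expect the difficulty. As a cross-check we would compute the first moment $A_1(p)=\frac1p\sum_{t}\sum_x\left(\frac{f(x,t)}{p}\right)$ and verify that $-A_1(p)$ averages to $r$, which is consistent with the generalized Nagao conjecture of \cite{hindrypacheco}; this conjecture is known in this rational case, which makes the argument unconditional.
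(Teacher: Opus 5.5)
\textbf{There is a genuine gap.} As written, this does not prove the statement. The family is never fixed: several models are joined by ``or'' and $\deg s$ is left unspecified. The independence of the $D_i$ is deferred (``we would show the resulting Gram matrix has full rank $r$''), and so is the Shioda--Tate count, which is the entire upper bound.

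More seriously, the one concrete mechanism you offer for landing on exactly $r$ is wrong. Take $y^2=p(x)^2+T\,h(x)$ with $h=\prod_{i=1}^{r}(x-a_i)\,s(x)$ and $s$ irreducible.
\begin{itemize}
\item The reducible fibres are at $T=0$, where $y^2=p(x)^2$ splits into $y=\pm p(x)$, and at $T=\infty$. The nodal fibres where $f$ acquires a double root are irreducible for generic $p$, so your list of reducible fibres is off on both counts.
\item Your points satisfy the relation $\operatorname{div}(y-p(x))=\sum_{h(\alpha)=0}(\alpha,p(\alpha))-(\text{divisor at infinity})$, which your count ignores.
\item If $\deg h=2g+2$, the surface is the Hirzebruch surface $H_{g+1}$ blown up at the $4g+4$ base points $(\alpha,\pm p(\alpha))$, so $\rho=4g+6$. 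The fibre over $T=\infty$ is $2\sigma_0+\sum_\alpha\widetilde F_\alpha$, where $\sigma_0$ is the negative section; it has $2g+3$ components. The geometric rank is therefore $4g+6-2-(2g+2)-1=2g+1$, not $4g+4$.
\item Over $\QQ$, let $F$ be the number of $\QQ$-irreducible factors of $h$. Counting Galois orbits gives $\rho=2+2F$ and $\sum_v(m_v-1)=F+1$, so the rank is $F-1$. The orbit of the roots of $s$ does not cut the rank; it contributes a class. The drop comes from the split fibres at $T=0$ and $T=\infty$.
\end{itemize}

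This exposes an inconsistency in your setup. With $\deg h=2g+2$ you do get $F-1=r$. But then the leading coefficient $c^2+T\operatorname{lc}(h)$ is not a square in $\QQ(T)$, so $\infty_\pm$ are conjugate and $D_i=[(a_i,p(a_i))-\infty_+]$ is not a $\QQ(T)$-rational class. If instead $\deg h\le 2g+1$, so that $\infty_\pm$ are rational as you assume, then $x=\infty$ behaves like an extra rational root of $h$. The same count, or the first moment, then gives rank $F=r+1$.

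Your Nagao cross-check would not catch this. The naive double sum $\sum_t\sum_x\left(\frac{f(x,t)}{p}\right)$ equals the prime times the number of roots of $h$ modulo that prime, so it always detects $F=r+1$. It omits the points at infinity and the term $m_{p,0}\,p$ with $m_{p,0}=2$ for the split fibre at $T=0$, which is the same pitfall flagged in the paper's opening Remark.

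The idea can be repaired. Take $\deg h=2g+2$ with exactly $r+1$ irreducible factors, use degree-zero classes such as $P^+_{a_i}-P^+_{a_1}$, and carry out the arithmetic Shioda--Tate count above; since it is an equality, it gives both bounds at once. None of this is in the proposal.

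For orientation, the paper avoids heights and Picard numbers entirely:
\begin{itemize}
\item It takes $y^2=f(x)+T^2$ with $f$ of degree $2g+2$ having exactly $r$ irreducible factors.
\item It gets $\gal(f+T^2)\supseteq A_{2g+2}$ via weak-Goldbach primes and Hilbert irreducibility. This gives trivial trace and, by primitivity, $m_{p,t}=1$ for finite $t$.
\item It proves rationality from $(y-T)(y+T)=f(x)$, so Nagao's conjecture holds for the family.
\item It computes $-A_p=\#\{x:f(x)\equiv 0\}+O(p^{-1/2})$, which averages to $r$.
\end{itemize}
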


Furthermore, for imaginary hyperelliptic curves with certain low-level ranks, we give an explicit construction via convex sequences. The results are summarized in \ref{explicitthm} and \ref{lowrankcor}.

\begin{restatable}[Explicit Low Ranks I]{thm}{explicitthm} \label{explicitthm}
    Let $g\leq r<2g+1$, $m=r-g$ and $n=2g-r$ be nonnegative integers, and let $\{a_k\}_{k=0}^{m}$, $\{b_\ell\}_{\ell=0}^{n}$ be two strictly increasing convex sequences of nonnegative rational numbers with $a_0=b_0=0$. Then $\mathcal{X}:y^2 = f(x)T + 1$ over $\QQ(T)$ is an imaginary hyperelliptic curve with genus $g$ and $\rank J_\mathcal{X}(\QQ(T))=r$, where \begin{equation}
    f(x) \ = \ x\prod_{k=1}^{m}(x^2-a_k^2)\prod_{\ell=1}^{n}(x^2+b_\ell^2)\mbox{.}
    \end{equation}
\end{restatable}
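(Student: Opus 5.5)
The plan is to compute the rank in two independent ways: a Nagao-type first-moment computation gives the predicted value $r$, and explicit divisors together with the Shioda–Tate formula give a proof. First, the setup. Since $\deg f = 1+2m+2n = 2g+1$ is odd and $f$ has distinct roots ($0,\pm a_k,\pm i b_\ell$, which are distinct because the sequences are strictly increasing with $a_0=b_0=0$), the curve $\mathcal{X}: y^2=f(x)T+1$ has a single point at infinity over $\QQ(T)$. Hence it is imaginary of genus $g$. Because the equation is linear in $T$, the total space is birational to the $(x,y)$-plane via $T=(y^2-1)/f(x)$. So the associated fibered surface $\mathcal{S}\to\PP^1$ is rational, and Tate's conjecture (hence the generalized Nagao conjecture of Hindry–Pacheco) holds for it.

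\textbf{Heuristic check via the first moment.} For a prime $p$ of good reduction, sum the Legendre symbol $\left(\frac{f(x)t+1}{p}\right)$ over $t\in\mathbb{F}_p$. This sum vanishes whenever $f(x)\not\equiv 0$, and equals $p$ when $f(x)\equiv0$. Thus the first moment is $A(p)=-p\cdot\#\{x\in\mathbb{F}_p: f(x)=0\}$.
\begin{itemize}
\item The $2m+1$ rational roots $0,\pm a_k$ contribute $2m+1$ for almost all $p$.
\item The $2n$ roots $\pm i b_\ell$ contribute $2n$ exactly when $p\equiv 1 \pmod 4$, so they contribute $n$ on average.
\end{itemize}
The Nagao average is therefore $2m+1+n=r+1$. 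I expect the corrected form of the conjecture for surfaces with singular fibers (cf.\ the Remark on \cite{19smallnagao}) to subtract exactly $1$, coming from the fiber at $T=\infty$ or the point at infinity. This predicts rank $r$.

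\textbf{Lower bound via explicit divisors.} For each rational root $\alpha\in\{0,\pm a_k\}$ let $P_\alpha=(\alpha,1)\in\mathcal{X}(\QQ(T))$. For each $\ell$ let $D_\ell$ be the $\QQ(T)$-rational degree-$2$ divisor cut out by $x^2+b_\ell^2=0$, $y=1$. Taking classes $[P_\alpha-\infty]$ and $[D_\ell-2\infty]$ gives $r+1$ elements of $J_\mathcal{X}(\QQ(T))$. The points with $y=-1$ are their images under the hyperelliptic involution, so they give nothing new. The function $y-1$ has divisor $\sum_\alpha P_\alpha+\sum_\ell D_\ell-(2g+1)\infty$, which gives one relation. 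To show the remaining $r$ classes are independent, I would specialize $T$ to suitable values $t_0\in\QQ$ and compute the determinant of the canonical height pairing. A cleaner route is to compute the Néron–Tate pairing on the rational surface, where $\langle P,Q\rangle$ is an intersection number corrected by fiber components. This is where the convexity of $\{a_k\}$ and $\{b_\ell\}$ should enter: it guarantees that no extra coincidences among the roots, and no additional linear relations, occur. That makes the Gram matrix visibly nondegenerate, perhaps diagonally dominant.

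\textbf{Upper bound via Shioda–Tate.} I would resolve the singularities of $\mathcal{S}$ over the bad fibers $T=0$ and $T=\infty$. The fibers $T=-1/f(\alpha)$ are singular only where $f'$ also vanishes, and squarefreeness controls these. Since $\mathcal{S}$ is rational, $\rho(\mathcal{S}_{\overline{\QQ}})=b_2$ can be computed by counting blow-ups from $\PP^2$. Shioda–Tate then gives
\[
\rank J_\mathcal{X}(\overline{\QQ}(T)) \;=\; \rho-2-\sum_v(m_v-1).
\]
Taking $\gal(\overline{\QQ}/\QQ)$-invariants should cut $2m+1+2n$ (minus the relation) down to $2m+1+n-1=r$, because each conjugate pair $\pm i b_\ell$ contributes only one invariant class.

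\textbf{Main obstacle.} The hardest step is this upper bound: correctly identifying the fiber components at $T=\infty$, where $y^2=f(x)T+1$ degenerates, and computing the Galois action on the Néron–Severi group. If the direct count does not work out, the fallback is to rely on the Nagao computation, which is unconditional here because the surface is rational, after carefully justifying the $-1$ correction term.
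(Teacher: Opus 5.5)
\textbf{Verdict: there is a genuine gap.} As written, neither of your routes reaches the conclusion.

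\textbf{The Nagao route and the $-1$.} Your divisor/Shioda--Tate route is only a plan. No Gram determinant is computed, and the fiber configuration and Picard number behind the upper bound, which you flag as the main obstacle, are never determined. The route you call a heuristic is in fact the paper's proof, and it is one step from complete. That step is the $-1$, which you leave to expectation and attribute to the wrong fiber. No corrected conjecture is needed. The fiber $T=\infty$ is irrelevant: it is not in your sum over $t\in\mathbb{F}_p$, and its trace is $O(p^{1/2})$.

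The $-1$ comes from the bad fiber $t=0$, $y^2=1$, which you did include. There the Legendre-symbol formula $a_p=-\sum_x\left(\frac{f(x)t+1}{p}\right)$ gives $-p$. The Hindry--Pacheco trace is instead $a_p(\mathscr{X}_{p,0})=m_{p,0}p+1-\#\mathscr{X}_{p,0}(\mathbb{F}_p)$. This fiber splits into $\mathbb{F}_p$-rational components ($y=\pm1$ and the one over $x=\infty$), so that trace is $O(p^{1/2})$. Hence the point-count formula may be used only for $t=1,\dots,p-1$:
\[
\sum_{t=1}^{p-1}a_p(\mathcal{X}_t) \ = \ -p\cdot\#\{x\in\mathbb{F}_p : f(x)\equiv 0\}+p .
\]
So $-A_p(\mathscr{X})=\#\{x: f(x)\equiv 0 \bmod p\}-1+O(p^{-1/2})$, and averaging over $p\equiv 1,3\pmod 4$ gives $(2m+1+n)-1=r$. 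Counting $t=0$ naively is the source of the off-by-one that the paper's Remark attributes to \cite{19smallnagao}.

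\textbf{The trace and the role of convexity.} You never address the $\QQ(T)/\QQ$-trace of $J_{\mathcal X}$. It must vanish for you to read $\rank J_{\mathcal X}(\QQ(T))$ off $A_p$ rather than $A_p^\ast$. This is exactly where the paper uses convexity:
\begin{enumerate}
\item Lemma \ref{morse}, via Hajdu--Herendi, makes $f$ Morse, so $\gal(f(x)+T)\cong S_{2g+1}$.
\item Substituting $T\mapsto 1/T$ gives $\gal(f(x)T+1)\cong S_{2g+1}$.
\item Zarhin's lemma then gives $\operatorname{End}(J)=\ZZ$.
\end{enumerate}
Alternatively, as the paper argues in the mid-rank case, rationality of the surface forces $\operatorname{Pic}^0=0$ and hence a trivial trace.

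Convexity does not, as you guess, control the height Gram matrix. In the paper's separate Shioda--Tate proof (Theorem \ref{lowrankMW}), that matrix has constant diagonal $c_0$ and constant off-diagonal $c_1\neq c_0$. The only nonzero local terms come from the $(d+2)$-component fiber at $T=\infty$. The upper bound there uses $e(S)=2d+6$, $\rho=2d+4$, $m_0=3$ and $m_\infty=d+2$, which gives geometric rank $2g$.

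These are the computations your primary route would need. Once they are in place, your Galois-descent count (orbits of roots minus the relation from $\operatorname{div}(y-1)$) is correct.
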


\begin{restatable}{cor}{lowrankcor}\label{lowrankcor}
    For any $g+1\leq r\leq 2g$, there exist infinitely many imaginary hyperelliptic curves $\mathcal{X}$ defined over $\mathbb{Q}(T)$ with genus $g\geq 1$ such that the Jacobian variety $J_{\mathcal{X}}(\mathbb{Q}(T))$ has rank $r$.
\end{restatable}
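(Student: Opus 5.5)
The plan is to derive \Cref{lowrankcor} directly from \Cref{explicitthm}. The work splits into two parts: producing one curve of the required genus and rank, and then showing there are infinitely many non-isomorphic ones.

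\emph{Existence for a fixed pair $(g,r)$.} Fix $g\geq 1$ and $g+1\leq r\leq 2g$, and set $m=r-g\geq 1$ and $n=2g-r\geq 0$. \Cref{explicitthm} needs two strictly increasing convex sequences of nonnegative rationals, $\{a_k\}_{k=0}^m$ and $\{b_\ell\}_{\ell=0}^n$, with $a_0=b_0=0$. Such sequences are easy to write down. A simple family is
\begin{equation}
a_k \ = \ c\,k^2, \qquad b_\ell \ = \ d\,\ell^2,
\end{equation}
with $c,d\in\QQ_{>0}$. These are strictly increasing and convex, since their second differences are $2c>0$ and $2d>0$. \Cref{explicitthm} then says that
\begin{equation}
\mathcal{X}_{c,d}:\ y^2 = f_{c,d}(x)T+1
\end{equation}
is an imaginary hyperelliptic curve of genus $g$ with $\rank J_{\mathcal{X}_{c,d}}(\QQ(T))=r$. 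The hypothesis $g\le r<2g+1$ of \Cref{explicitthm} holds because $g+1\le r\le 2g$. In fact the same argument covers $r=g$, but the corollary does not ask for it.

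\emph{Infinitude.} The remaining step is to show that infinitely many of the $\mathcal{X}_{c,d}$ are pairwise non-isomorphic over $\QQ(T)$, or at least over $\overline{\QQ}(T)$. It suffices to vary $c$ with $d$ fixed. The roots of $f_{c,d}$ are $0$, $\pm c k^2$ and $\pm i d\ell^2$, giving $2g+1$ distinct points of $\overline{\QQ}$. Together with $\infty$ they form the branch locus of $x$. An isomorphism of hyperelliptic curves of genus $g\ge 2$ over $\overline{\QQ}(T)$ induces a Möbius transformation of the $x$-line that carries one branch locus onto the other. The branch locus of $\mathcal{X}_{c,d}$ is constant in $T$, so its isomorphism class is determined by the $\mathrm{PGL}_2(\overline{\QQ})$-orbit of this configuration of $2g+2$ points. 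For $g\ge 2$ we have at least $6$ points, so cross-ratios among them give nonconstant rational functions of $c$. Only finitely many values of $c$ can therefore give a configuration equivalent to a fixed one, and infinitely many $c$ give pairwise distinct curves.

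\emph{The case $g=1$ and the main obstacle.} When $g=1$ the corollary asks only for $r=2$, so $m=1$, $n=0$ and the curve is $y^2=x(x^2-a_1^2)T+1$. Here the branch points are $0,\pm a_1,\infty$, which all lie in one $\mathrm{PGL}_2$-orbit, so the cross-ratio argument gives nothing. Instead I would compare $j$-invariants after putting the curve in Weierstrass form, or argue directly: rescaling $x\mapsto a_1x$ carries the curve to $y^2=x(x^2-1)a_1^3T+1$, and the question becomes whether the substitution $T\mapsto a_1^3T$ counts as producing a new curve over $\QQ(T)$. If the intended notion of ``infinitely many'' is curves distinct as equations, or non-isomorphic over $\QQ(T)$ without reparametrizing $T$, this is immediate. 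Otherwise one can adjust the construction, for example by replacing $T$ with $T^k$, and then check that \Cref{explicitthm}'s rank computation still applies. Handling this degenerate genus-one case is the step I expect to be the main obstacle; for $g\ge2$ the argument is routine.
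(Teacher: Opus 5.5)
Your existence step is the paper's route. The paper states the corollary directly after Theorem~\ref{explicitthm} with no further argument; the infinitude comes implicitly from the infinitely many admissible convex sequences, i.e.\ distinct defining equations, and the paper never addresses isomorphism classes. Your sequences $a_k=ck^2$, $b_\ell=d\ell^2$ are admissible, so at the paper's level your first paragraph is already a complete proof.

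The non-isomorphism argument you add, however, is wrong as written.

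(i) The branch locus of $x$ on $y^2=f(x)T+1$ over $\overline{\QQ}(T)$ is the zero set of $f(x)T+1$, i.e.\ the solutions of $f(x)=-1/T$, together with $\infty$. It moves with $T$. Over a root $\alpha$ of $f$ one has $y=\pm1$, so these roots give the sections $(\alpha,\pm1)$ that the paper uses later; they are not branch points. So the isomorphism class is not governed by the $\mathrm{PGL}_2(\overline{\QQ})$-orbit of $\{f=0\}\cup\{\infty\}$.

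(ii) Even granting your premise, the cross-ratio claim fails whenever $n=0$, i.e.\ $r=2g$, for every $g$. Then $f_c(x)=c^{2g+1}f_1(x/c)$, so the configuration is only rescaled, and $\mathcal{X}_c$ is $\mathcal{X}_1$ with $T$ replaced by $c^{2g+1}T$. The obstacle you isolated for $g=1$ is really an $r=2g$ phenomenon.

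A working invariant is the set of places of bad reduction. Any $\overline{\QQ}(T)$-isomorphism preserves it, since it is intrinsic (e.g.\ via the N\'eron model of the Jacobian). For $f$ as in Theorem~\ref{explicitthm}, the bad places in $\overline{\QQ}^{\times}$ are exactly the values $-1/f(\beta)$ with $f'(\beta)=0$. For any other $t\neq 0$, the polynomial $tf(x)+1$ is squarefree of degree $2g+1$. At these values, Lemma~\ref{morse} gives an irreducible fibre with a single node on a regular model, so the Jacobian has toric reduction there.

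Now scale both sequences by $\lambda\in\QQ_{>0}$. They stay admissible, and $f_\lambda(x)=\lambda^{2g+1}f(x/\lambda)$, so the set above is multiplied by $\lambda^{-(2g+1)}$. Comparing elements of largest absolute value, distinct $\lambda$ give pairwise non-isomorphic curves over $\overline{\QQ}(T)$. This works uniformly for all $g\ge 1$, including $g=1$.

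If you also allow reparametrizing $T$, scaling produces nothing new, and for $g=1$, $r=2$ Theorem~\ref{explicitthm} then yields only one surface; the paper does not use that notion.

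Finally, drop the $T\mapsto T^k$ idea. Base change alters the surface, and the first-moment computation in Theorem~\ref{explicitthm} breaks, because $\sum_t\bigl(\frac{f(x)t^k+1}{p}\bigr)$ is no longer a vanishing linear Legendre sum.
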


For mid-level ranks, we have the following Theorem \ref{midrankthm}, which guarantees the existence of infinitely many hyperelliptic curves of all ranks in the mid-level range.

\begin{restatable}[Mid 
Ranks I]{thm}{midrankthm} \label{midrankthm}
    For any $2g+1\leq r\leq 4g+2$, there exist infinitely many imaginary hyperelliptic curves $\mathcal{X}$ defined over $\QQ(T)$ with genus $g$ such that the Jacobian variety $J_\mathcal{X}(\QQ(T))$ has rank $r$.
    Furthermore, for any $2g+2\leq r\leq 4g+4$, there exist infinitely many real hyperelliptic curves $\mathcal{X}/\QQ(T)$ with genus $g$ such that $\rank J_\mathcal{X}(\QQ(T))=r$.
\end{restatable}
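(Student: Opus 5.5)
The plan is to realize each rank $r$ by an explicit quadratic-in-$x$-coefficient family in the spirit of \cite{19smallnagao}, adapted from the low-rank construction in Theorem \ref{explicitthm}. For the imaginary case, take
\begin{equation}
\mathcal{X}:\ y^2 \ = \ f(x)\,T^2 + h(x)\,T + k(x),
\end{equation}
with $\deg f,\deg h,\deg k\le 2g+1$ and at least one of them of degree exactly $2g+1$. Generically this is a curve of genus $g$ over $\QQ(T)$. After the substitution $T\mapsto T-h/(2f)$-type completion, the surface is a rational (or at least simple) fibered surface. The cleanest choice is $y^2 = f(x)T^2 + k(x)$, with $f$ and $k$ squarefree of degree $\le 2g+1$ chosen so that $f(x)k(x)$ has prescribed rational roots. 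Each rational root $x_i$ of $k(x)=s_i^2$-type conditions (i.e. $k(x_i)$ a square) gives the rational point $(x_i,\sqrt{k(x_i)})$. Each root of $f$ likewise gives points linear in $T$. The rational roots are chosen so that the number of independent resulting divisors (the points minus the point at infinity, modulo the hyperelliptic involution) equals $r$. To hit every $r$ in $[2g+1,4g+2]$, I would fix $f$ to split completely over $\QQ$, contributing $2g+1$ points. I would then let $k$ have exactly $r-(2g+1)$ rational roots, with the remaining factor irreducible of suitable degree (e.g. $k=c\prod_{j=1}^{s}(x-\beta_j)\cdot q(x)$ with $q$ irreducible and $c$ a square). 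Infinitude comes from varying the $\beta_j$ and the roots of $f$ over infinitely many admissible choices, which give non-isomorphic curves by a discriminant/height argument. The real case is identical with degrees $\le 2g+2$, giving ranges $[2g+2,4g+4]$.

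For the rank computation I will use both routes. The lower bound comes from exhibiting the $r$ divisors $P_i-\infty$ (resp. $P_i - P_i'$ in the real case). I will show their independence by specializing $T$ to a value $t_0$ and computing a nondegenerate height pairing matrix, or more simply by Silverman's specialization theorem together with independence on a fiber. The upper bound comes from the Shioda–Tate formula,
\begin{equation}
\rank J_{\mathcal{X}}(\QQ(T)) \ = \ \rho(S) - 2 - \sum_{v}(m_v-1),
\end{equation}
where $S$ is the smooth model. Alternatively, I would use the generalized Nagao conjecture of Hindry–Pacheco, whose first moment for $y^2=f(x)T^2+k(x)$ is computed by summing Legendre symbols $\sum_t\left(\frac{f(x)t^2+k(x)}{p}\right)$. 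This sum equals $-\left(\frac{f(x)}{p}\right)$ when $p\nmid f(x)k(x)$ and is $\pm p$-sized otherwise. The averaged $-A_p\log p/p$ then counts, by Chebotarev, the rational roots of $f$ and $k$ (i.e. the number of $\QQ$-factors), which gives exactly $r$. The surface here is rational, so Tate's conjecture holds and Nagao is unconditional.

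The main obstacle is exactly the error flagged in the Remark. The fibers over the roots of $f$ and $k$, and at $T=\infty$, are singular, and one must carefully account for their reducible components and for any Galois action on them. Only then do the Shioda–Tate count and the Nagao moment give $r$ and not $r\pm1$. I would first resolve the singular fibers explicitly via the Cox–Zucker method \cite{CoxZucker1979}, determining $\rho(S)$ over $\QQ$ (which, for a rational surface, is computed from the blow-ups defined over $\QQ$). I would then adjust the parity/degree bookkeeping (whether a root at infinity is contributed) until the two counts agree at both endpoints $2g+1$ and $4g+2$.
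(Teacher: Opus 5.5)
There is a genuine gap, and it sits exactly where the construction has to produce the number $r$.

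\textbf{The first moment does not count roots.} For $y^2=f(x)T^2+k(x)$ the rational curves in the degenerate fibres are as follows.
\begin{itemize}
\item Over a root $\alpha$ of $f$ they are $(\alpha,\pm\sqrt{k(\alpha)})$.
\item Over a root $\beta$ of $k$ they are $(\beta,\pm\sqrt{f(\beta)}\,T)$. So it is the roots of $k$, not of $f$, that give points linear in $T$.
\end{itemize}
These are defined over $\QQ(\alpha)(T)$, resp.\ $\QQ(\beta)(T)$, only if $k(\alpha)$, resp.\ $f(\beta)$, is a square there. Your moment computation sees the same thing. Applying Lemma \ref{quadlegendre} to the $t$-sum for fixed $x$ gives
\begin{equation*}
-A_p(\mathscr{X}) \ = \ \sum_{\substack{\alpha\in\mathbb{F}_p\\ f(\alpha)=0}}\left(\frac{k(\alpha)}{p}\right)+\sum_{\substack{\beta\in\mathbb{F}_p\\ k(\beta)=0}}\left(\frac{f(\beta)}{p}\right)+O(p^{-1/2})+(\text{bad-fibre corrections}),
\end{equation*}
and the Legendre symbol of a non-square averages to $0$ over primes. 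So the first moment does not count the rational roots of $f$ and $k$. If $f$ splits and $k$ merely has $r-(2g+1)$ rational roots, the rank is generically much smaller than $r$.

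Reaching $r$ would require about $r$ simultaneous conditions: $k(\alpha_i)\in\QQ^{\times2}$ at all $2g+1$ roots of $f$, $f(\beta_j)\in\QQ^{\times2}$ at the roots of $k$, and control over the irreducible factor $q$. These must also be compatible with the prescribed factorization of $k$. This is a Diophantine problem your proposal never addresses.

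\textbf{The other essential step is also left open.} Fibres where the right-hand side becomes a square over $\overline{\mathbb{F}_p}$ give $m_{p,t}=2$ and shift the average by $1$. In your family the candidates are the $t$ where the $x$-degree drops. This is precisely the error in \cite{19smallnagao}, and ``adjusting the bookkeeping until the counts agree'' is not an argument. Finally, $\QQ$-rationality is asserted without proof: the conic $y^2-f(x)T^2=k(x)$ need not have a $\QQ(x)$-point. Geometric rationality would suffice for Tate's conjecture, but it must be argued.

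\textbf{The missing idea, which the paper uses, is to take the $T^2$-coefficient to be $x$ itself.} For $y^2=xT^2+b(x)T+c(x)$ the same computation gives
\begin{equation*}
-A_p \ = \ \sum_{D(\alpha)=0}\left(\frac{\alpha}{p}\right)+O(p^{-1/2}), \qquad D=b^2-4xc.
\end{equation*}
So the only square condition is on the roots of $D$ themselves. It is met by prescribing
\begin{equation*}
D \ = \ \prod_{i=1}^{2k}(x-a_i^2)\prod_{j=1}^{\ell}\bigl(x^2-2(b_j^2-c_j^2)x+(b_j^2+c_j^2)^2\bigr),
\end{equation*}
with $k=r-2g-1$ and $\ell=4g+2-r$. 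Its roots $a_i^2$ and $(b_j\pm ic_j)^2$ are all squares. The remaining steps are supplied by three lemmas.
\begin{itemize}
\item Lemma \ref{midranklem} realizes any such $D$ as $b^2-4xc$ with $\deg b=2g+1$ and $\deg c\le 2g$. It is a polynomial square-root approximation, using that $D$ is monic with $D(0)$ a square.
\item Lemma \ref{quadtatechow} writes $D$ as a norm from $\QQ(x)(\sqrt{x})$. This gives a $\QQ(x)$-point on the conic, hence $\QQ$-rationality, so Nagao is unconditional and $B=0$.
\item Lemma \ref{quadnonsquare}, via the condition $\sum\epsilon_k\alpha_k\neq0$, shows $xt^2+b(x)t+c(x)$ is never a square. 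Hence $m_{p,t}=1$ for every $t$ and no off-by-one shift occurs.
\end{itemize}
Then $-A_p$ equals $2k+2\ell$ or $2k$, up to $O(p^{-1/2})$, according as $p\equiv1$ or $3\pmod 4$. The average is $2k+\ell=r$.
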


Our constructions of hyperelliptic curves relied on the generalized Nagao's conjecture, which holds for the constructions as Tate's conjecture holds for the surface. We also give an alternative proof of the rank of the constructed hyperelliptic curves, using Mordell-Weil lattices. That is, we show that we could construct $r$ free generators that form a basis of the torsion-free subgroup of the Mordell-Weil group.

\begin{restatable}[Explicit Low Ranks II]{thm}
{lowrankMW}\label{lowrankMW}
    For any $g\leq r<2g+1$, choose $m,n$ such that $m+n=g$ and $2m+n=r$. Then let
    \begin{equation}
          f(x) \ = \ x\prod_{k=1}^{m}(x^2-a_k^2)\prod_{\ell=1}^{n}(x^2+b_\ell^2)\mbox{.}
    \end{equation}
    where $\{a_k\}_{k=0}^{m}$ and $\{b_\ell\}_{\ell=0}^{n}$ are two strictly increasing convex sequences of nonnegative rational numbers with $a_0=b_0=0$.
    The family $\mathcal{X}:y^2=f(x)T+1$ of hyperelliptic curves satisfies $\rank J_\mathcal{X}(\QQ(T))=r$. In particular, one can explicitly construct a basis of $r$ elements in the torsion-free subgroup of the Mordell-Weil group.
\end{restatable}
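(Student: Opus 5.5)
The plan is to produce $r$ explicit $\QQ(T)$-rational divisor classes, show they are independent with a height (or specialization) argument, and then bound the rank from above by $r$ using Shioda--Tate together with the Galois action on the geometric Mordell--Weil group.

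\textbf{Construction of the generators.} Write $\infty$ for the unique point at infinity of the imaginary model $y^2=f(x)T+1$; here $\deg f=2g+1$. At every root $x_0$ of $f$ we have $f(x_0)T+1=1$, so $(x_0,\pm 1)$ lies on $\mathcal{X}$ for every $T$. The rational roots of $f$ are $0,\pm a_1,\dots,\pm a_m$, which gives $2m+1$ points $P_j=(x_j,1)$ defined over $\QQ(T)$. The remaining roots $\pm i b_\ell$ give points $Q_\ell^{\pm}=(\pm i b_\ell,1)$ defined over $\QQ(i)(T)$, and each conjugate sum $Q_\ell^++Q_\ell^-$ is a $\QQ(T)$-rational divisor. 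This yields the $2m+1+n=r+1$ classes
\[
D_j=[P_j-\infty], \qquad E_\ell=[Q_\ell^++Q_\ell^--2\infty]
\]
in $J_\mathcal{X}(\QQ(T))$.

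These classes satisfy one relation. The function $y-1=f(x)T/(y+1)$ has divisor exactly $\sum_j P_j+\sum_\ell(Q_\ell^++Q_\ell^-)-(2g+1)\infty$, so $\sum_j D_j+\sum_\ell E_\ell=0$. Discarding one class, say $D_0$ at $x=0$, leaves $r$ candidate generators.

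\textbf{Lower bound.} To show these $r$ classes are independent, I would work over $\overline{\QQ}(T)$, where all $2g+1$ points $(x_0,1)$ with $f(x_0)=0$ are available. There I compute the Gram matrix of the canonical height pairing of the classes $[(x_0,1)-\infty]$ via the intersection-theoretic formula on the minimal regular model, in the style of \cite{CoxZucker1979}. The computation uses the zero/section intersections, the correction terms from the reducible fibres, and $\chi$ of the surface. I expect this matrix to have rank exactly $2g$: the $2g+1$ classes with the single relation coming from $y-1$.

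The rational classes $D_j$ ($j\neq 0$) and $E_\ell$ are integral combinations of these geometric classes. Their span therefore has rank $2m+n=r$, because the conjugate pairs are merged and the one relation is used only once. The convexity and strict monotonicity of $\{a_k\}$ and $\{b_\ell\}$ enter here only mildly. They guarantee that the $2g+1$ roots are distinct and that $f(x)T+1$ is squarefree in $x$, so the model is smooth with the expected bad fibres. If the height computation proves messy, an alternative for independence is to specialize $T$ to a suitable rational value and use Silverman's specialization theorem.

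\textbf{Upper bound.} The surface $y^2=f(x)T+1$ is rational, since $T=(y^2-1)/f(x)$. Hence its Picard number is computable, for instance from $\rho=10-K^2$ on a rational surface after resolving, or directly as $h^{1,1}$. The Shioda--Tate formula
\[
\rank J_\mathcal{X}(\overline{\QQ}(T))=\rho-2-\sum_v(m_v-1)
\]
then only requires the singular fibres: over $T=0$ the fibre is $y^2=1$ with two components, and over $T=\infty$ I would blow up to find the components.

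I expect the geometric rank to come out to $2g$, matching the lower bound. Since $J_\mathcal{X}(\QQ(T))\otimes\QQ$ equals the $\gal(\overline{\QQ}/\QQ)$-invariants of the geometric Mordell--Weil space, I then compute these invariants. Complex conjugation fixes the $2m+1$ real-root classes and swaps $Q_\ell^+$ with $Q_\ell^-$. Assuming, as expected, that the geometric lattice is spanned up to finite index by the $2g+1$ root classes modulo the one relation, the invariant subspace has dimension $(2m+1)+n-1=r$. This gives the upper bound and completes the proof.

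\textbf{Main obstacle.} The hardest step is the Shioda--Tate bookkeeping: computing $\rho$ and the fibre components at $T=\infty$ after resolving the singular model, and justifying that the root classes span the geometric lattice up to finite index. The latter is needed to pass correctly to the Galois invariants. I would first try the explicit resolution at $T=\infty$, and use the rationality of the surface to pin down $\rho$.
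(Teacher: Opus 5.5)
Your strategy is the paper's. You take the sections $(\alpha,1)$ with $f(\alpha)=0$ over $\overline{\QQ}(T)$, get the geometric lower bound $2g$ from a height Gram matrix, and get the matching upper bound from Shioda--Tate with $\rho=10-K_S^2=e(S)-2$, using rationality. You then pass to $\gal(\overline{\QQ}/\QQ)$-invariants, which have dimension $(2m+1+n)-1=r$. Exhibiting the single relation via $\operatorname{div}(y-1)$ is a nice explicit touch. The paper instead takes $O=(0,1)$ as zero section, so the $2g$ classes $P_\alpha-O$ are outright independent and Galois simply permutes them.

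However, both computations that carry the proof are only ``expected'' in your proposal, and the one piece of fibre data you commit to is wrong. The fibre at $T=0$ of a smooth model does not have two components. In the chart $x=1/u$, $y=v/u^{g+1}$ the surface is $v^2=u\tilde f(u)T+u^{2g+2}$ with $\tilde f(0)=1$, and this has an $A_1$ singularity at $u=v=T=0$. Its exceptional $(-2)$-curve is a third, reduced component (the paper's ``$x=\infty$'' component). The curves $y=\pm1$ become $(-(g+1))$-curves through a single point of it, where they also meet each other with multiplicity $g$. With $m_0=2$ and the correct $\rho=4g+6$, Shioda--Tate returns $4g+6-2-1-(2g+2)=2g+1$. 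Your upper bound would then not meet your lower bound, and you could not conclude that the root classes have finite index.

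The data you need is as in the paper. At $T=\infty$ the fibre is $2\Theta_1+\sum_{i=1}^{2g+1}\Theta_{\alpha_i}+\Theta_{x=\infty}$, with $\Theta_1^2=-(g+1)$ and the other components $(-2)$-curves, each meeting $\Theta_1$ once. The remaining bad fibres are the $2g$ irreducible nodal curves over $T=-1/f(\beta)$ with $f'(\beta)=0$. Hence $e(S)=4g+8$, $\rho=4g+6$, and the geometric rank is $4g+6-2-2-(2g+2)=2g$.

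The choice of base point also matters for the height computation. With the paper's $O=(0,1)$, the zero section and every root section lie on the identity component $y=1$ at $T=0$. So only the $T=\infty$ correction survives: $1$ on the diagonal and $\tfrac12$ off it, read off from $-A_\infty^{-1}$. With your base point $P_\infty$, the zero section passes through the new component at $T=0$ while the root sections meet $y=1$, so you pick up an extra local term $\tfrac{g+1}{2g+1}$ there. All these sections are $(-1)$-curves on $S$. Using this, the Gram matrix of your $2g+1$ classes is $\tfrac12\bigl(I-\tfrac{1}{2g+1}\mathbf{1}\mathbf{1}^{\top}\bigr)$, which has rank exactly $2g$, as you predicted. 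Once the two bounds agree, the finite-index statement you list as a separate obstacle is automatic.
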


\begin{restatable}[Explicit Mid Ranks]{thm}{midrankMW}\label{midrankMW}
    Let $n = 2g+1$. For any $r$ such that $2g+1\leq r\leq 4g+2$, let $k = -2g+r-1$ and $\ell = 4g-r+2$. There exist infinitely many pairwise distinct $2n$-tuples $(a_1,\dots,a_{2k},b_1,\dots,b_\ell,c_1,\dots,c_\ell)$ such that 
    \begin{align}
        D(x) & \ :=\  b(x)^2 - 4xc(x) \notag \\
        &\ = \ (x-a_1^2)\cdots(x-a_{2k}^2)(x^2-2(b_1^2-c_1^2)x+(b_1^2+c_1^2)^2)\cdots(x^2-2(b_\ell^2-c_\ell^2)x+(b_\ell^2+c_\ell^2)^2)
    \end{align}
    for some $b(x)$, $c(x)$ with $\deg b(x) = n$ and $\deg c(x) < n$, and \begin{equation}
    \sum_{k=1}^{2n}\epsilon_k\alpha_k \ \neq \ 0\quad\mbox{for every}\quad(\epsilon_1,\dots,\epsilon_{2n}) \in \{\pm1\}^{2n}\mbox{,}
    \end{equation}
    where \begin{equation}
    \mathcal{R} \ = \ \{a_1,\dots,a_{2k},b_1+ic_1,b_1-ic_1,\dots,b_\ell+ic_\ell,b_\ell-ic_\ell\} \ = \ \{\alpha_1,\dots,\alpha_{2n}\}\mbox{.}
    \end{equation}
    Then family $\mathcal{X}:y^2=xT^2+b(x)T+c(x)$ of imaginary hyperelliptic curves satisfies $\rank J_\mathcal{X}(\QQ(T))=r$. In particular, one can explicitly construct a basis of $r$ elements in the torsion-free subgroup of the Mordell-Weil group.
\end{restatable}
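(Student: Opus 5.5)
The surface $y^2 = xT^2+b(x)T+c(x)$ is quadratic in $T$. I will exploit this to produce sections, then bound the rank from above with Shioda--Tate. The proof splits into three steps: (i) construct the $2n$-tuples so that $D(x)=b(x)^2-4xc(x)$ has the prescribed factorization; (ii) produce $r$ rational divisor classes on $J_\mathcal{X}$ over $\QQ(T)$ and show they are independent; (iii) show the rank is at most $r$.

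\textbf{Construction of the tuples.} Write $x=u^2$. The condition $D(u^2)=b(u^2)^2-4u^2c(u^2)=\prod_{\alpha\in\mathcal{R}}(u^2-\alpha^2)$ says that
\[
P(u):=\prod_{j=1}^{2n}(u-\alpha_j) = B(u)-2uC(u),
\]
where $B,C$ are even polynomials. Then $P(u)P(-u)=B^2-4u^2C^2$. So I set $b(x)=B(\sqrt x)$ and $c(x)=C(\sqrt x)^2$; the degrees are right because $P$ has degree $2n$. Hence any choice of $\mathcal{R}$ closed under complex conjugation works:
\begin{itemize}
\item $2k$ real roots $a_i$ and $\ell$ conjugate pairs $b_j\pm ic_j$, so that $\prod(x-\alpha^2)$ gives exactly the displayed factors, since $(b+ic)^2$ and $(b-ic)^2$ are the roots of $x^2-2(b^2-c^2)x+(b^2+c^2)^2$;
\item rationality follows because $P\in\QQ[u]$.
\end{itemize}
The condition $\sum\epsilon_k\alpha_k\neq0$ fails only on finitely many hyperplanes. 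Choosing rationals off them gives infinitely many distinct tuples. (One should double check that the paper's normalization of $c$ allows $c=C^2$; this is a perfect square in $x$, which is convenient.)

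\textbf{Sections.} On the fiber over the root $x=\alpha_j^2$, the discriminant in $T$ vanishes, so $xT^2+bT+c$ becomes $x(T+b/2x)^2$, a square times $x=\alpha_j^2$. This suggests the points
\[
P_j=\bigl(\alpha_j^2,\ \pm\alpha_j(T+b(\alpha_j^2)/2\alpha_j^2)\bigr).
\]
These are defined over $\QQ(\alpha_j)(T)$. For each real $a_i$ I get a $\QQ(T)$-point, giving $2k$ points. For each conjugate pair, $P_{b+ic}+P_{b-ic}$ is a $\QQ(T)$-rational degree-2 divisor, and I would also use the difference twisted by $i$ to get two classes per pair over $\QQ(T)$, giving $2\ell$. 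Counting:
\begin{itemize}
\item the $2k+2\ell$ classes, with degrees normalized by subtracting $\infty$ (imaginary model, odd degree), give $2k+2\ell=2(r-2g-1)+2(4g-r+2)=4g+2-... $; more precisely the total is $4n-r$ after the Galois-invariant count;
\item rewriting $r=2k+\ell+(2g+1)-\dots$: the exact count should be $r=2k+\ell+\ell$ minus relations, i.e. $r$ equals the number of Galois orbits of the $2n$ geometric sections plus the extra pair classes, minus one relation coming from the hyperelliptic involution.
\end{itemize}
I would compute this count carefully against $k=r-2g-1$ and $\ell=4g+2-r$, so that $2k+\ell=r$, and expect the independent classes to be $\{P_j-\infty\}$ for real $\alpha_j$ together with one class per conjugate pair.

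\textbf{Independence and upper bound.} Geometrically, over $\overline{\QQ}(T)$, the $4n$ sections $P_j^\pm$ generate a lattice of rank $2n=4g+2$. I will show this via the height pairing: compute intersection numbers of sections with each other, the zero section, and fibre components, in the style of \cite{CoxZucker1979}.

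The genericity condition $\sum\epsilon_k\alpha_k\ne0$ ensures the sections are pairwise disjoint except where forced. It also ensures the singular fibres are only those where $D$ interacts with the leading coefficient, so the correction terms in Shioda--Tate vanish. The geometric Picard number then gives rank exactly $4g+2$ over $\overline{\QQ}(T)$. The $\QQ(T)$-rank is the rank of the Galois-invariant sublattice. Complex conjugation swaps the $\ell$ conjugate pairs, so it cuts the rank by $\ell$, giving $(4g+2)-\ell=r$.

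\textbf{Main obstacle.} The main obstacle is the Shioda--Tate computation: verifying there are no reducible fibres, which uses the $\epsilon$-condition, and computing the Picard number of the (rational or K3-like) surface to get geometric rank exactly $4g+2$.
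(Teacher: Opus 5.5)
Your overall route is the paper's: sections over the roots of $D$, a height-pairing Gram matrix for independence, Shioda--Tate for the upper bound, and Galois descent for the count. However, the first step fails.

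Write $P(u)=\prod_j(u-\alpha_j)=B(u)-2uC(u)$. The leading term of $C$ is $\tfrac12 e_1u^{2n-2}$ with $e_1=\sum_j\alpha_j$, and $e_1\neq0$ by the theorem's own hypothesis (take all $\epsilon_k=+1$). Hence $c(x)=C(\sqrt x)^2$ has degree $2n-2=4g\ge n$, which violates $\deg c<n$.

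This is not a normalization issue. The right-hand side $xT^2+b(x)T+c(x)$ then has $x$-degree $4g$, so $\mathcal X$ is an even-degree model (generically of genus $2g-1$), not the imaginary genus-$g$ curve of the statement. Moreover, the fibre over $T=0$ is $y^2=C(\sqrt x)^2$, which splits as $y=\pm C(\sqrt x)$. That is exactly the kind of reducible fibre you later need to rule out to kill the Shioda--Tate correction terms. The proof of Lemma~\ref{quadnonsquare} works only because $\deg_x F(x,t)\le n<2n-1$, so the odd part of $A(s)$ cannot reach $s^{2n-1}$; your $c$ breaks this.

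The construction you need is Lemma~\ref{midranklem}. Note that $D(0)=\prod a_i^2\prod(b_j^2+c_j^2)^2=r_0^2$ is a rational square. Take the unique monic $S$ with $\deg(S^2-D)\le n-1$ (the polynomial part of $\sqrt D$), and set $b=S+r_0-S(0)$, $c=(b^2-D)/4x$. In fact every $(b,c)$ with $\deg b=n$, $\deg c<n$ has $b=\pm S+\text{const}$. So no norm factorization of the kind you wrote can satisfy the degree condition when $e_1\neq0$.

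Beyond that, the rational count is garbled and the computations that carry the proof are missing. On the count: there is no $\QQ(T)$-rational ``difference twisted by $i$''. For a conjugate pair $\{P,\bar P\}$, the Galois-invariant part of $\QQ P\oplus\QQ\bar P$ is spanned by $P+\bar P$ alone, since $i$ is not an endomorphism of $J_\mathcal{X}$ in general. Totals like $2k+2\ell$ or $4n-r$ are wrong; the correct count is $2k+\ell=r=(4g+2)-\ell$, which you reach only as an expectation.

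For independence and the upper bound, the paper exploits the structure you noticed but used only to write down sections. The surface is a conic bundle over the $x$-line whose degenerate fibres are exactly the $2n=4g+2$ fibres over roots of $D$. Each splits as $F_i=P_i+P_i^-$ with $(P_i\cdot P_i^-)=1$. From $0=F_i^2=2P_i^2+2$ one gets $P_i^2=-1$, so the height matrix has $\chi+1$ on the diagonal and $\chi$ elsewhere, hence is invertible. Also $e(S)=4+(4g+2)$, so $\rho(S)=e(S)-2=4g+4$ for the rational surface $S$ (Lemma~\ref{quadtatechow}). Shioda--Tate with all fibres irreducible then gives geometric rank exactly $4g+2$.

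Finally, the $\epsilon$-condition is not what makes the sections disjoint; their distinct constant $x$-coordinates do that. Its role is to prevent any fibre $xt^2+b(x)t+c(x)$ from being a square in $\overline{\QQ}[x]$, which is what gives $m_v=1$.
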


\begin{rem}
    The analog of Theorem \ref{midrankMW} holds for real hyperelliptic curves, where $n=2g+2$, $2g+2 \leq r \leq 4g+4$, $k=-2g+r-2$ and $\ell = 4g-r+4$. One can follow the same proof method to achieve this result.
\end{rem}

\section{Tate's Conjecture and Generalized Nagao's Conjecture}

Let $\mathcal{E}:y^2 \ = \ x^3+A(T)x+B(T)$ be a one-parameter family of elliptic curves defined by a Weierstrass equation. Equivalently, this is an elliptic curve over $\QQ(T)$, or a (non-split) elliptic surface over $\QQ$. We use these terms interchangeably. We denote the specialization of $\mathcal{E}$ to the integer $T=t$ as $\mathcal{E}_t$, or the curve: \begin{equation}
\mathcal{E}_t : y^2 \ = \ x^3 + A(t)x + B(t)\mbox{,}
\end{equation}
defined over $\QQ$. For a prime $p$ of good reduction, let \begin{equation}
\mathcal{E}_t(\mathbb{F}_p) \ = \ \{(x,y)\in\mathbb{F}_p^2\,|\, y^2 \equiv x^3 + A(t)x + B(t)\pmod{p}\}\mbox{,}
\end{equation}
so that $\#\mathcal{E}_t(\mathbb{F}_p)$ is the number of $\mathbb{F}_p$-rational points of $\mathcal{E}_t$ on the non-singular part of the curve. The Frobenius trace of $\mathcal{E}_t$ at $p$ is defined by \begin{equation}
a_p(\mathcal{E}_t) \ = \ p+1 - \#\mathcal{E}_t(\mathbb{F}_p)\mbox{,}
\end{equation}
which is known to satisfy the bound $|a_p(\mathcal{E}_t)| \leq 2\sqrt{p}$ by Hasse's work (see \cite{silverman2009arithmetic}). 

Define the $r$-th moment of $\mathcal{E}$ by \begin{equation}
A_{\mathcal{E},r}(p) \ = \ \frac{1}{p}\sum_{t=0}^{p-1}a_p(\mathcal{E}_t)^r\mbox{.}
\end{equation}
Nagao \cite{Nagao1997} posited that the first moment of the one-parameter family $\mathcal{E}$ is related to the rank of $\mathcal{E}$ over the field $\QQ(T)$.

\begin{conj}\cite{Nagao1997}
Let $\mathcal{E}$ be a one-parameter family of elliptic curves. Then \begin{equation}
\lim_{N\to\infty}\frac{1}{N}\sum_{p\leq N}-A_{\mathcal{E},1}(p)\log p \ = \ \rank\mathcal{E}(\QQ(T))\mbox{.}
\end{equation}
\end{conj}

The following result of Rosen and Silverman shows that Tate's conjecture implies the analytical version of Nagao's conjecture:

\begin{thm}\cite{rosensilverman}
    Let $\mathcal{E}$ be an elliptic surface over $\QQ$. Then Tate's conjecture for $\mathcal{E}$ implies the following analytic version of Nagao's conjecture: \begin{equation}
    \res_{s=1}\sum_{p}-A_{\mathcal{E},1}(p)\frac{\log p}{p^s} \ = \ \rank\mathcal{E}(\QQ(T))\mbox{.}
    \end{equation}
    If, in addition, the $L$-series $L_2(\mathcal{E}/\QQ,s)$ attached to $H_{\text{\'et}}^2(\mathcal{E}/\overline{\QQ},\QQ_\ell)$ does not vanish on the line $\Re(s)=2$, then Tate's conjecture implies the original Nagao's conjecture.
\end{thm}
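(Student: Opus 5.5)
The plan is to pass from point counts on the fibers to the Frobenius action on $H^2_{\text{\'et}}(\mathcal{E}/\overline{\QQ},\QQ_\ell)$, and then to read off the rank from the pole of an $L$-function via Tate's conjecture and the Shioda--Tate formula. Throughout, $\mathcal{E}$ denotes the smooth projective (Kodaira--N\'eron) model of the surface, and we restrict to primes $p$ of good reduction for it; the finitely many bad primes do not affect residues.

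\textbf{Step 1: fibration count and Lefschetz.} For such $p$, count $\#\mathcal{E}(\mathbb{F}_p)$ in two ways. First, fiber by fiber over $\PP^1(\mathbb{F}_p)$. A smooth fiber over $t$ contributes $p+1-a_p(\mathcal{E}_t)$. A singular fiber contributes a count governed by the Frobenius action on its components. Second, by the Lefschetz trace formula,
\[
\#\mathcal{E}(\mathbb{F}_p) \;=\; 1 - \operatorname{tr}(F_p\mid H^1) + \operatorname{tr}(F_p\mid H^2) - \operatorname{tr}(F_p\mid H^3) + p^2 .
\]
The $H^1$ and $H^3$ terms come from the base and the fixed part, and they cancel against the corresponding pieces of the fiberwise count. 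Let $T\subset NS(\mathcal{E}_{\overline{\QQ}})$ be the trivial lattice, spanned by the zero section, a fiber, and the non-identity fiber components. Comparing the two counts then gives
\[
-\sum_{t} a_p(\mathcal{E}_t) \;=\; \operatorname{tr}\bigl(F_p \mid H^2\bigr) - p\,\operatorname{tr}\bigl(F_p \mid T\otimes\QQ_\ell(-1)\bigr) + O(1),
\]
where the $O(1)$ term is uniformly bounded and the contribution of the singular fibers cancels exactly against the fiber-component part of $T$. Writing $V=H^2/(T\otimes\QQ_\ell(-1))$, this says $-pA_{\mathcal{E},1}(p)=\operatorname{tr}(F_p\mid V)+O(1)$.

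\textbf{Step 2: the $L$-function.} Consider $-\frac{d}{ds}\log L(V,s)$. By Deligne's bound (the eigenvalues on $V$ have absolute value $p$), the prime-power terms and the $O(1)$ error define functions holomorphic on $\Re(s)>3/2$. It follows that
\[
\sum_p -A_{\mathcal{E},1}(p)\frac{\log p}{p^{s}} \;=\; -\frac{L'(V,s+1)}{L(V,s+1)} + (\text{holomorphic near } s=1).
\]
The residue at $s=1$ is therefore the order of the pole of $L(V,s)$ at $s=2$. Tate's conjecture says that this order equals $\rank NS(\mathcal{E})^{\gal(\overline{\QQ}/\QQ)}-\rank T^{\gal}$. (The $T$ part is handled by Artin $L$-functions of permutation representations on fiber components, whose pole orders count Galois orbits.) The Shioda--Tate formula identifies this difference with $\rank\mathcal{E}(\QQ(T))$, which proves the first assertion.

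\textbf{Step 3: the original Nagao conjecture.} Assume in addition that $L_2(\mathcal{E}/\QQ,s)$ has no zeros on $\Re(s)=2$. Then $-L'(V,s+1)/L(V,s+1)$ has only the simple pole at $s=1$ on the line $\Re(s)=1$. The Wiener--Ikehara Tauberian theorem applied to $\sum_p -A_{\mathcal{E},1}(p)\log p\,p^{-s}$ gives $\sum_{p\le N}-A_{\mathcal{E},1}(p)\log p\sim N\rank\mathcal{E}(\QQ(T))$. The theorem requires nonnegative coefficients, so one applies it to the coefficients shifted by a suitable positive multiple of $\log p$ and then subtracts the prime number theorem.

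\textbf{Main obstacle.} The hardest part is the exact bookkeeping in Step 1. The singular fibers (including the fiber at infinity) must be matched term by term with the Frobenius trace on the non-identity components in $T$, so that no spurious poles are left. This is precisely where a mismatch, as in the paper's remark on \cite{19smallnagao}, would produce a wrong rank. To settle it, I would first verify the matching directly for each Kodaira type, checking that split and non-split multiplicative fibers contribute exactly the trace of $F_p$ on the permutation module of their components.
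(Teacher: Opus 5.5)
The paper gives no proof of this statement; it is quoted from \cite{rosensilverman}. Your outline is essentially the Rosen--Silverman argument, and the approach is correct:
\begin{itemize}
\item a fiberwise point count compared with the Lefschetz trace formula on the smooth model;
\item Tate's conjecture for the pole of $L_2$ at $s=2$;
\item Dedekind zeta functions for the fiber-component part of the trivial lattice;
\item Shioda--Tate over $\QQ$;
\item a Tauberian theorem for the limit form. Your shift by a multiple of $\log p$ is legitimate, since $A_{\mathcal{E},1}(p)$ is bounded.
\end{itemize}
Several details need tidying.

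\emph{The twist in Step 1.} Your displayed identity counts the Tate twist twice. Since $\operatorname{tr}(F_p\mid T\otimes\QQ_\ell(-1)) = p\,\operatorname{tr}(F_p\mid T\otimes\QQ_\ell)$, the subtracted term should be $\operatorname{tr}(F_p\mid T\otimes\QQ_\ell(-1))$ with no extra factor $p$. As written, the next line $-pA_{\mathcal{E},1}(p)=\operatorname{tr}(F_p\mid V)+O(1)$ does not follow.

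\emph{The groups $H^1$ and $H^3$.} These do not cancel. They vanish, because an elliptic surface over $\PP^1$ with a section that is not geometrically a product has irregularity $q=0$. If they were nonzero (the constant case), their traces would survive in $A_{\mathcal{E},1}(p)$. That is exactly why Hindry--Pacheco pass to $A_p^\ast$.

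\emph{The error term.} With the paper's normalization, which sums only over $t\in\mathbb{F}_p$, the omitted fiber at $t=\infty$ contributes an error of size $O(\sqrt p)$ rather than $O(1)$. This is harmless, because $\sum_p p^{-1/2}\log p\,p^{-s}$ is holomorphic for $\Re(s)>1/2$.

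\emph{The Tauberian step.} In Step 3 you claim $-L'(V,s+1)/L(V,s+1)$ has no singularity on $\Re(s)=1$ other than $s=1$. Besides the nonvanishing hypothesis, this needs $L_2(\mathcal{E}/\QQ,s)$ to continue across $\Re(s)=2$ with no poles there other than $s=2$. The Dedekind zeta factors cause no trouble, but the condition on $L_2$ itself must be read into the hypotheses, as part of the continuation statement in Tate's conjecture.

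\emph{The step you flag as the main obstacle.} This needs no Kodaira-type-by-type check.
\begin{itemize}
\item Lefschetz on a single fiber gives $\#F_t(\mathbb{F}_p)=1-\operatorname{tr}(F_p\mid H^1(\overline{F_t}))+p\,n_t(p)$. Here $n_t(p)$ is the number of Frobenius-stable components, because $H^2(\overline{F_t})$ is the permutation module on the components twisted by $\QQ_\ell(-1)$.
\item The sum $\sum_{t\in\PP^1(\mathbb{F}_p)}(n_t(p)-1)$ is exactly the trace of $F_p$ on the non-identity-component part of $T\otimes\QQ_\ell$. Components over non-$\mathbb{F}_p$-rational points of $\PP^1$ are permuted without fixed points, and the identity component is stable because the zero section meets it.
\item One only needs $p$ outside a finite set, so that the configuration of singular fibers and their components specializes faithfully mod $p$.
\end{itemize}
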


\begin{rem}
    Rosen-Silverman applies to number fields; we restrict to $\QQ$ in this paper for ease of exposition.
\end{rem}

The same article resolves Tate's conjecture for rational elliptic surfaces, that is, where $\mathcal{E}$ is birational to $\mathbb{P}^2$.
Nagao's conjecture is thus an unconditional result in some cases, particularly for rational surfaces. \cite{kim2023} also proves Nagao's conjecture assuming the Sato-Tate conjecture for some twisted families of elliptic curves. The conjecture nonetheless remains unknown in full generality.

Now let $\mathcal{X}:y^2 = f(x,T)$ be a hyperelliptic curve of genus $g$ over $\QQ(T)$, so that $\deg_x f$ is either $2g+1$ or $2g+2$. If $\deg_x f = 2g+1$, then we call $\mathcal{X}$ an \textit{imaginary hyperelliptic curve}, and if $\deg_x f = 2g+2$, we call $\mathcal{X}$ a \textit{real hyperelliptic curve}. 

The notion of the rank of an elliptic curve does not generalize naturally to hyperelliptic curves of genus $g\geq 2$. The reason is that the points on $\mathcal{X}$ do not necessarily form an abelian group. A natural way to generalize the rank is to consider the Jacobian variety of $\mathcal{X}$, to which Mordell-Weil extends in the following result:

\begin{thm}[Lang-N\'eron]
    Let $K/k$ be a regular finitely generated field extension, and $A$ an abelian variety over $K$. Let $A(K)$ be the group of $K$-rational points and $(\operatorname{Tr}_{K/k}A(K),\tau)$ be the $K/k$-trace of $A$, so that $\operatorname{Tr}_{K/k}A(K)$ is an abelian variety over $k$ with a $K$-morphism \begin{equation}
    \tau : \operatorname{Tr}_{K/k}A(K) \ \to \ A\mbox{.}
    \end{equation}
    Then $A(K)/\tau(\operatorname{Tr}_{K/k}A(K))$ is a finitely generated abelian group.\footnote{As is often noted, the subgroup of rational points retains the label of \textbf{Mordell-Weil group} here, but should perhaps be called the Lang-N\'eron group.}
\end{thm}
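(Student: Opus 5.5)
We plan to argue geometrically. First reduce to $k$ algebraically closed, then reduce to the case where $A$ is a Jacobian over the function field of a curve, and finally identify $A(K)/\tau(\operatorname{Tr}_{K/k}A(K))$ with a subquotient of a N\'eron--Severi group, which is finitely generated by the theorem of the base. Write $B=\operatorname{Tr}_{K/k}A(K)$ throughout.

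\emph{Step 1: reduction to $k=\bar k$.} Since $K/k$ is regular, $K\bar k=K\otimes_k\bar k$ is a field, and the trace commutes with this base change; for $k$ perfect this holds up to purely inseparable isogeny, which does not affect finite generation. We would show that
\begin{equation*}
A(K)/\tau(B(k)) \ \longrightarrow \ A(K\bar k)/\tau(B(\bar k))
\end{equation*}
has finite kernel. Suppose $P\in A(K)$ equals $\tau(b)$ with $b\in B(\bar k)$. Then $b$ is fixed by $\operatorname{Gal}(\bar k/k)$ modulo $\ker\tau$, which is finite. Hence $b$ defines a point of $B(k)$ up to a finite ambiguity, so the kernel is finite. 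Finite generation of the target then gives finite generation of $A(K)/\tau(B(k))$. From now on we assume $k=\bar k$.

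\emph{Step 2: reduction to curves and Jacobians.} Write $K=k(V)$ with $V$ a normal projective variety. By Poincar\'e reducibility, $A$ is isogenous to a quotient of the Jacobian $J_X$ of some curve $X/K$. Isogenies change the quotient by finite groups, and the surjection $J_X\to A$ induces a map on the Lang--N\'eron quotients whose cokernel is killed by an integer. Since we will prove the quotient is finitely generated, a cokernel killed by an integer is finite, so it suffices to treat $A=J_X$. If $\dim V>1$, we would cut $V$ by a generic curve $C\subset V$ using Bertini, and invoke a specialization statement: $A(K)\to A_C(k(C))$ is injective modulo the trace for a sufficiently general $C$. This is the delicate geometric input, and we expect to need Lang's argument via rational maps $V\dashrightarrow A$ extending over codimension-two loci. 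We therefore take $V=C$ to be a curve.

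\emph{Step 3: the N\'eron--Severi argument (main step).} Let $\pi:\mathcal S\to C$ be a smooth projective surface with generic fibre $X$. Restriction to the generic fibre gives a surjection $\operatorname{Pic}(\mathcal S)\to\operatorname{Pic}(X)$, and its kernel is generated by classes of components of fibres. Passing to degree zero and quotienting by algebraic equivalence, we obtain the Shioda--Tate-type exact sequence
\begin{equation*}
0 \to T \to NS(\mathcal S) \to J_X(K)/\tau(B(k)) \to 0,
\end{equation*}
where $T$ is generated by the zero section, a fibre, and fibre components.

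The key claim is that the divisors on $\mathcal S$ that are algebraically equivalent to zero, namely $\operatorname{Pic}^0(\mathcal S)$, map exactly onto $\tau(B(k))$. To prove this, compare $\operatorname{Pic}^0_{\mathcal S}$ with the constant part of $J_X$ via the universal property of the $K/k$-trace. The map $\operatorname{Pic}^0_{\mathcal S}\to J_X$ factors through $B$; conversely, the trace embeds into the Albanese/Picard of $\mathcal S$. We expect this identification of the kernel with the trace to be the main obstacle, since it requires care with vertical divisors and with inseparability in positive characteristic.

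Granting the exact sequence, $J_X(K)/\tau(B(k))$ is a quotient of $NS(\mathcal S)$. The theorem of the base (N\'eron--Severi) says $NS(\mathcal S)$ is finitely generated, which completes the argument.
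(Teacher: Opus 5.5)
The paper does not prove this statement. It quotes Lang--N\'eron as background and only uses it for $k=\QQ$, $K=\QQ(T)$, so there is no argument to compare against. Judged on its own, your Step 3 is the right engine: it is the classical route through the theorem of the base, via the same Shioda--Tate sequence the paper uses later. Step 2, however, has two genuine gaps.

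First, the reduction from $\dim V>1$ to a curve is only asserted. Injectivity of $A(K)\to A_C(k(C))$ modulo traces for a general curve section $C\subset V$ is a nontrivial specialization theorem of Lefschetz/N\'eron type, not a consequence of Bertini. You would have to exclude points outside $\tau(B(k))$ landing in $\tau_C(B_C(k))$, which includes controlling how the trace grows on $C$, and the proposal leaves this unproved. The standard route fibres instead of cutting:
\begin{enumerate}
\item Choose $k\subset k_1\subset K$ with $K/k_1$ regular of transcendence degree one.
\item Prove the curve case over an \emph{arbitrary} base field. That is what your Step 1 is for, so it must come after this reduction, not before.
\item Induct on $\operatorname{trdeg}_k K$ using $\tau(B(k))\subseteq\tau_1(B_1(k_1))\subseteq A(K)$ with $B_1=\operatorname{Tr}_{K/k_1}(A)$, together with the transitivity $\operatorname{Tr}_{k_1/k}(B_1)=\operatorname{Tr}_{K/k}(A)$, which is formal from the universal properties.
\end{enumerate}

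Second, the passage to Jacobians is circular. The cokernel of $J_X(K)/\tau_J(B_J(k))\to A(K)/\tau_A(B_A(k))$ is killed by an integer, but you deduce its finiteness from finite generation of $A(K)/\tau_A(B_A(k))$. That is the statement being proved, and you do not have a weak Mordell--Weil theorem to substitute for it. Use maps in the injective direction instead. By Poincar\'e reducibility, $A$ is isogenous to an abelian subvariety $A'\subseteq J_X$ with complement $A''$. For $k=\bar k$, the induced map $A(K)/\tau_A(B_A(k))\to J_X(K)/\tau_J(B_J(k))$ then has finite kernel: traces of isogenous varieties are isogenous, isogenies are surjective on $\bar k$-points, and $A'\cap A''$ is finite.

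Two smaller points.
\begin{itemize}
\item In Step 1, finiteness of $\ker\tau$ does not by itself make the kernel finite. It only yields a cocycle $\sigma\mapsto\sigma b-b$ with values in a finite group, and over a general $k$ there can be infinitely many such classes. What you need is Chow's theorem that $\ker\tau$ is infinitesimal (trivial in characteristic $0$). Then $\tau$ is injective on points, $b$ is Galois-invariant, and in characteristic $0$ the map is injective.
\item In Step 3, the ``main obstacle'' you flag is not needed. Finite generation only requires the image of $\operatorname{Pic}^0(\mathcal S)$ in $J_X(K)$ to lie in $\tau(B(k))$. This is immediate from the universal property of the trace, since restriction to the generic fibre is a $K$-homomorphism $(\operatorname{Pic}^0_{\mathcal S/k})_{\mathrm{red}}\times_k K\to J_X$. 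The reverse inclusion (the identification of $B$ with $\operatorname{Pic}^0$ of the surface that the paper borrows from Tate) and exactness on the left matter only for Shioda--Tate. You do need $\operatorname{Pic}(\mathcal S)\to J_X(K)$ to be surjective, which uses $X(K)\neq\emptyset$ or Tsen's theorem ($\operatorname{Br}(\bar k(C))=0$).
\end{itemize}

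With these repairs the argument is complete when $\operatorname{trdeg}_k K=1$, which is the only case the paper needs.
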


If we let $J_\mathcal{X}$ be the Jacobian variety of $\mathcal{X}/\QQ(T)$ and $(B,\tau)$ be the $\QQ(T)/\QQ$-trace of $J_\mathcal{X}$, then $J_\mathcal{X}(\QQ(T))/\tau(B(\QQ))$ is a finitely generated abelian group. Assume $\tau(B(\QQ))$ is trivial. We define the rank as the number of free generators of this group, which we call the Jacobian rank and denote $\rank J_\mathcal{X}(\QQ(T))$. To state Hindry and Pacheco's generalization of Nagao's conjecture, we pass from the hyperelliptic curve $\mathcal{X}/\QQ(T)$ to a fibered surface over $\QQ$.

\begin{rem}
    Hindry and Pacheco work over number fields, but we use $\QQ$ for simplicity.
\end{rem}

Let $\mathscr{X}$ be a smooth irreducible projective surface equipped with a proper flat morphism $\pi:\mathscr{X}\to \mathbb{P}_\QQ^1$ whose generic fiber is $\mathcal{X}$. Thus, if $\eta$ is a generic point of $\mathbb{P}^1$, then \begin{equation}
\mathscr{X}_\eta \ \simeq \ \mathcal{X}/\QQ(T)\mbox{.}
\end{equation}
We use $\mathcal{X}$ for the generic hyperelliptic curve of genus $g\geq 1$ over $\QQ(T)$ and $\mathscr{X}$ for the corresponding fibered surface over $\QQ$. Let $S$ be the finite set of prime numbers such that for every $p\notin S$, $\mathscr{X}$ has good reduction modulo $p$. Then the reduction of $\pi$ modulo $p$ is a proper flat morphism $\pi_p:\mathscr{X}_p\to \mathbb{P}_{\mathbb{F}_p}^1$ whose fibers are curves of arithmetic genus $g$ over $\mathbb{F}_p$. For each $t\in\mathbb{P}^1(\mathbb{F}_p) = \mathbb{F}_p \cup\{\infty\}$, let $\mathscr{X}_{p,t} = \pi_p^{-1}(t)$.

Let \begin{equation}
\Delta \ = \ \{t\in\mathbb{P}^1(\overline{\QQ})\,|\, \mathscr{X}_t\ \mbox{is singular}\}\mbox{.}
\end{equation}
Enlarging $S$ by at most finitely many primes if necessary, we may assume that for every $p\notin S$, the discriminant locus $\Delta_p$ of $\pi_p$ is the reduction of $\Delta$ modulo $p$. 

Let $\overline{\text{Frob}_p}$ be the Frobenius automorphism acting on $H_{\mbox{\scriptsize{\'et}}}^1(\overline{\mathscr{X}}_{p,t},\QQ_\ell)$. For $t\in \mathbb{P}_{\mathbb{F}_p}^1 - \Delta_p$, define \begin{equation}
a_p(\mathscr{X}_{p,t}) \ = \ \operatorname{Tr}(\overline{\operatorname{Frob}_p}\,|\, H_{\mbox{\scriptsize{\'et}}}^1(\overline{\mathscr{X}_{p,t}},\QQ_\ell))
\end{equation}
and for $t\in\Delta_p$, replace $H_{\mbox{\scriptsize{\'et}}}^1$ by compactly supported $\ell$-adic cohomology $H_c^1$. 

The following lemma from Hindry and Pacheco \cite{hindrypacheco2003} relates $a_p(\mathscr{X}_{p,t})$ explicitly to the number of $\mathbb{F}_p$-rational points.
\begin{lem}
    Let $p\notin S$, $t\in\mathbb{P}_{\mathbb{F}_p}^1$, $\mathscr{X}_{p,t} = \pi_p^{-1}(t)$, and let $m_{p,t}$ be the number of $\mathbb{F}_p$-rational components of $\mathscr{X}_{p,t}$. Then \begin{equation}
    a_p(\mathscr{X}_{p,t}) \ = \ m_{p,t} p + 1 - \#\mathscr{X}_{p,t}(\mathbb{F}_p)\mbox{.}
    \end{equation}
\end{lem}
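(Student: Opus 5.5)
The plan is to apply the Grothendieck--Lefschetz trace formula to the fibre $\overline{\mathscr{X}}_{p,t}$, a proper curve over $\mathbb{F}_p$ (possibly singular or reducible when $t\in\Delta_p$), and to identify the traces on $H^0$ and $H^2$ separately. Since $\mathscr{X}_{p,t}$ is proper, $H^i_c = H^i$. For $t\notin\Delta_p$ this group agrees with the étale cohomology used in the definition. For $t\in\Delta_p$ the definition already uses $H_c^1$. So in both cases the trace formula gives
\begin{equation}
\#\mathscr{X}_{p,t}(\mathbb{F}_p) \ = \ \sum_{i=0}^{2}(-1)^i\operatorname{Tr}\bigl(\overline{\operatorname{Frob}_p}\,\big|\,H_c^i(\overline{\mathscr{X}_{p,t}},\QQ_\ell)\bigr),
\end{equation}
and the lemma reduces to computing the $H^0$ and $H^2$ terms.

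For $H^0$, I will use that $\mathscr{X}_{p,t}$ is geometrically connected. The surface $\mathscr{X}$ is smooth and projective and $\pi$ is proper and flat with geometrically integral generic fibre, so $\pi_*\mathcal{O}_{\mathscr{X}}=\mathcal{O}_{\mathbb{P}^1}$, and Zariski's connectedness theorem gives connected geometric fibres. This persists after reduction mod $p\notin S$, enlarging $S$ if needed. Hence $H^0\cong\QQ_\ell$ with trivial Frobenius action, and its trace is $1$.

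For $H^2$ of a proper curve over $\overline{\mathbb{F}}_p$, I will use the standard fact that $H^2(\overline{C},\QQ_\ell)\cong\bigoplus_{Z}\QQ_\ell(-1)$, with one summand for each irreducible component $Z$ of $\overline{C}$. To justify it, compare with the normalization $\nu:\tilde C\to C$. The map $\nu$ is finite and an isomorphism outside a finite set, so it induces an isomorphism on $H^2$, since finite sets contribute nothing to $H^2$ or to $H^1$ of the cokernel of $\mathcal{F}\to\nu_*\nu^*\mathcal{F}$. Each smooth component then contributes $\QQ_\ell(-1)$ through its trace map. Frobenius permutes these summands as it permutes the geometric components and acts on each $\QQ_\ell(-1)$ by $p$. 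A permutation summand contributes to the trace only when its component is fixed, that is, defined over $\mathbb{F}_p$. So the trace on $H^2$ equals $m_{p,t}\,p$.

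Substituting into the trace formula gives $\#\mathscr{X}_{p,t}(\mathbb{F}_p)=1-a_p(\mathscr{X}_{p,t})+m_{p,t}p$, which rearranges to the claimed identity. The main obstacle is the $H^2$ computation for singular, possibly non-reduced fibres. The underlying reduced curve has the same étale cohomology, and $m_{p,t}$ counts components regardless of multiplicity, so I would first pass to the reduced fibre and then use the normalization argument above. A secondary point is checking that good reduction outside $S$ preserves connectedness of the fibres; that is where the flatness and properness of $\pi_p$ are needed.
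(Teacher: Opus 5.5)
Your argument is correct. The paper gives no proof of this lemma and simply cites Hindry--Pacheco; your Grothendieck--Lefschetz computation is the standard argument behind that citation: trace $1$ on $H^0$ from geometric connectedness of the fibres, and trace $m_{p,t}p$ on $H^2$ from the Frobenius-fixed geometric components, after passing to the reduced fibre and its normalization.

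The one point to tighten is the source of connectedness. Geometric connectedness of the fibres of $\pi_p$ comes from $\pi_*\mathcal{O}=\mathcal{O}$ on a normal model of $\pi$ over $\mathbb{Z}[1/N]$, together with Zariski's connectedness theorem over that base, not from flatness and properness of $\pi_p$ alone.
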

In particular, if $\mathscr{X}_{p,t}$ is smooth, then $m_{p,t}=1$ and $a_p(\mathscr{X}_{p,t}) = p+1-\#\mathscr{X}_{p,t}(\mathbb{F}_p)$. Let \begin{equation}
a_p(B) \ = \ \operatorname{Tr}(\operatorname{Frob}_p\,|\, H_{\mbox{\scriptsize{\'et}}}^1(\overline{B},\QQ_\ell)^{I_p})\mbox{,}
\end{equation}
where $I_p$ is the inertia group of $\operatorname{Frob}_p$, and define the average Frobenius trace by \begin{equation}
A_p(\mathscr{X}) \ = \ \frac{1}{p}\sum_{t\in \mathbb{P}^1(\mathbb{F}_p)}a_p(\mathscr{X}_{p,t})
\end{equation}
and the reduced average trace by \begin{equation}
A_p^\ast(\mathscr{X}) \ = \ A_p(\mathscr{X}) - a_p(B)\mbox{.}
\end{equation}

Hindry and Pacheco generalize the analytic version of Nagao's conjecture.

\begin{conj}\cite{hindrypacheco2003}\label{hpnagaoconj}
    Let $\pi:\mathscr{X}\to\mathbb{P}_\QQ^1$ be as above, with generic fiber $\mathcal{X}/\QQ(T)$. Then \begin{equation}
    \res_{s=1}\sum_{p\notin S}-A_p^\ast(\mathscr{X})\frac{\log p}{p^s} \ = \ \rank\left(\frac{J_\mathcal{X}(\QQ(T))}{\tau(B(\QQ))}\right)\mbox{.}
    \end{equation}
\end{conj}

We additionally note their analogue of Rosen and Silverman's result for hyperelliptic curves.

\begin{thm}\cite{hindrypacheco2003}
    Let $\pi:\mathscr{X}\to\mathbb{P}_\QQ^1$ be as above, with generic fiber $\mathcal{X}/\QQ(T)$. Then Tate's conjecture for $\mathscr{X}$ implies the following analytic version of Nagao's conjecture: \begin{equation}
    \res_{s=1}\sum_{p\notin S}-A_p^\ast(\mathscr{X})\frac{\log p}{p^s} \ = \ \rank\left(\frac{J_\mathcal{X}(\QQ(T))}{\tau(B(\QQ))}\right)\mbox{.}
    \end{equation}
    If, in addition, the $L$-series $L_2(\mathscr{X}/\QQ,s)$ attached to $H_{\text{\'et}}^2(\mathscr{X}/\overline{\QQ},\QQ_\ell)$ does not vanish on the line $\Re(s)=2$, then Tate's conjecture implies \begin{equation}
    \lim_{N\to\infty}\frac{1}{N}\sum_{\substack{p\notin S \\ p\leq N}}-A_p^\ast(\mathscr{X})\log p \ = \ \rank\left(\frac{J_\mathcal{X}(\QQ(T))}{\tau(B(\QQ))}\right)\mbox{.}
    \end{equation}
\end{thm}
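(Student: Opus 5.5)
The plan is to compute the trace of Frobenius on $H^2_{\text{\'et}}(\overline{\mathscr{X}},\QQ_\ell)$ fiber by fiber, then compare the pole order given by Tate's conjecture with the Shioda--Tate formula for the fibration $\pi$. The first step is to use the Leray spectral sequence for $\pi$, whose fibers are geometrically connected. It gives $H^1(\overline{\mathscr{X}})\cong H^1(\overline{B})$ as Galois modules, where $B$ is the $\QQ(T)/\QQ$-trace. By Poincar\'e duality, $H^3$ is the Tate twist of $H^1$. So for $p\notin S$ the Lefschetz trace formula reads
\begin{equation}
\#\mathscr{X}_p(\mathbb{F}_p) \ = \ 1 - a_p(B) + \operatorname{Tr}(\operatorname{Frob}_p\,|\,H^2) - p\,a_p(B) + p^2\mbox{.}
\end{equation}

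The second step counts the left side fiberwise: $\#\mathscr{X}_p(\mathbb{F}_p)=\sum_{t\in\mathbb{P}^1(\mathbb{F}_p)}\#\mathscr{X}_{p,t}(\mathbb{F}_p)$. Apply the Hindry--Pacheco lemma, $\#\mathscr{X}_{p,t}(\mathbb{F}_p)=m_{p,t}p+1-a_p(\mathscr{X}_{p,t})$, and write $\sum_t m_{p,t}=(p+1)+\sum_{t\in\Delta_p}(m_{p,t}-1)$. After rearranging, this yields
\begin{equation}
\frac{\operatorname{Tr}(\operatorname{Frob}_p\,|\,H^2)}{p} \ = \ 2 + \sum_{t\in\Delta_p}(m_{p,t}-1) - A_p^\ast(\mathscr{X}) + O(p^{-1/2})\mbox{.}
\end{equation}
Here the $p\,a_p(B)$ term has been absorbed into $A_p^\ast$. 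The leftover $a_p(B)/p$ is $O(p^{-1/2})$ by the Weil bounds.

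The third step multiplies by $\log p/p^{s}$, sums over $p\notin S$, and takes residues at $s=1$; equivalently, one looks at $-L_2'/L_2$ at $s=2$. The left side has residue $-\operatorname{ord}_{s=2}L_2(\mathscr{X}/\QQ,s)$, because prime powers and the finitely many primes in $S$ contribute no pole. Tate's conjecture identifies this with $\rank NS(\mathscr{X})$, the rank over $\QQ$. On the right side:
\begin{itemize}
\item the constant $2$ contributes residue $2$;
\item the $O(p^{-1/2})$ term contributes nothing;
\item the bad-fiber term $\sum_{t\in\Delta_p}(m_{p,t}-1)$ is handled by the Chebotarev density theorem, applied to the finite Galois action on the singular points $\Delta$ and on the components of the corresponding fibers.
\end{itemize}
For each closed point $v\in\Delta$, Chebotarev shows that on average this term contributes the number of $\QQ$-rational components of $\mathscr{X}_v$ minus one. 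Its residue is therefore exactly the fibral correction $\sum_v(m_v-1)$ that appears in Shioda--Tate over $\QQ$.

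The final step invokes the Shioda--Tate formula for fibered surfaces:
\begin{equation}
\rank NS(\mathscr{X}) \ = \ 2 + \sum_v(m_v-1) + \rank\left(J_\mathcal{X}(\QQ(T))/\tau(B(\QQ))\right)\mbox{.}
\end{equation}
Subtracting this from the residue identity gives the analytic Nagao statement. For the limit form, assume $L_2$ has no zeros on $\Re(s)=2$. Then $-L_2'/L_2(s+1)$ has only the simple pole at $s=1$ on the line $\Re s=1$, and the Wiener--Ikehara Tauberian theorem converts the residue statement into the Ces\`aro limit over $p\le N$. The same Tauberian argument applies to the Chebotarev averages.

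I expect the main obstacle to be the bookkeeping for the bad fibers. One must check that, for $p\notin S$, $m_{p,t}$ really equals the number of Frobenius-fixed geometric components of the reduced fiber, which may require enlarging $S$. Then the Chebotarev averaging must reproduce exactly the Galois-descended fibral term in Shioda--Tate. I would treat this one closed point of $\Delta$ at a time, working in its splitting field.
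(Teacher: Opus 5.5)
The paper gives no proof of this statement; it is quoted from Hindry--Pacheco. Your outline is the standard Rosen--Silverman-style argument that they adapt. Your identity
\[
\frac{\operatorname{Tr}(\operatorname{Frob}_p\mid H^2)}{p} \ = \ 2+\sum_{t\in\Delta_p(\mathbb{F}_p)}(m_{p,t}-1)-A_p^\ast(\mathscr{X})+O(p^{-1/2})
\]
is correct, and comparing residues via Tate's conjecture and Shioda--Tate is the right plan. Two steps need repair as written.

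\emph{Bad fibres.} Chebotarev averages the number of Frobenius-fixed geometric components, and by Burnside's lemma that average is the number of Galois orbits. So take a closed point $v\in\Delta$ with residue field $k_v$; the points of $\Delta_p(\mathbb{F}_p)$ above $v$ correspond to degree-one primes of $k_v$. Their contribution to the residue is the number of irreducible components of $\mathscr{X}_v$ over $k_v$ (the $\operatorname{Gal}(\overline{\QQ}/k_v)$-orbits of geometric components), minus one. It is not the number of geometric components defined over $k_v$ minus one, which is what your phrase ``$\QQ$-rational components'' suggests. For example, if the fibre over $t=0$ has two components conjugate over $\QQ(i)$, then $m_{p,0}$ is $2$ or $0$ according as $p\equiv 1$ or $3\pmod 4$, so the contribution is $0$, not $-1$.

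This orbit count is exactly the fibral term of Shioda--Tate over $\QQ$, but you should derive that formula rather than just invoke it. Take $G_\QQ$-invariants in the geometric Shioda--Tate sequence tensored with $\QQ$; invariants are exact on $\QQ$-vector spaces. Then use $J_\mathcal{X}(\overline{\QQ}(T))^{G_\QQ}=J_\mathcal{X}(\QQ(T))$, $B(\overline{\QQ})^{G_\QQ}=B(\QQ)$, and $\rank NS(\overline{\mathscr{X}})^{G_\QQ}=\rank NS(\mathscr{X}/\QQ)$.

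\emph{Tauberian step.} Wiener--Ikehara needs nonnegative coefficients, and $\operatorname{Tr}(\operatorname{Frob}_p^k\mid H^2)\log p/p^k$ can be negative. Apply it instead to $-\frac{L_2'}{L_2}(s+1)-b_2\frac{\zeta'}{\zeta}(s)$, with $b_2=\dim H^2$. Its coefficients are nonnegative by Deligne's bound $|\operatorname{Tr}(\operatorname{Frob}_p^k\mid H^2)|\le b_2p^k$. Then subtract the prime number theorem.

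Moreover, nonvanishing of $L_2$ on $\Re(s)=2$ does not by itself imply that $-\frac{L_2'}{L_2}(s+1)$ has only the pole at $s=1$ on $\Re(s)=1$. You also need $L_2$ to have no poles at $2+it$ with $t\neq 0$, and this must be taken as part of the analytic hypotheses. It holds automatically for the rational surfaces used in this paper: there $H^2$ is spanned by algebraic classes, so $L_2(s)=L(s-1,\rho)$ for the Artin representation $\rho$ on $NS(\overline{\mathscr{X}})\otimes\CC$.

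The bad-fibre averages need no Tauberian theorem at all: Chebotarev with natural density, plus partial summation, suffices.
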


Hindry and Pacheco confirm Tate's conjecture and the nonvanishing of $L_2(\mathscr{X}/\QQ,s)$ for the following cases.
\begin{itemize}
    \item $\mathscr{X}$ is rational or ruled,
    \item $\mathscr{X}$ is a singular $K3$ surface or a $K3$ surface of CM type, a CM abelian surface, or a factor of the Jacobian of a modular curve,
    \item $\mathscr{X}$ is the product of a modular curve of genus $1$ and a modular curve of genus at least $2$,
    \item $\mathscr{X}$ is a Hilbert modular surface or a Fermat surface.
\end{itemize}
Thus, Conjecture \ref{hpnagaoconj} is known for all of the cases above. We use the following version of generalized Nagao's conjecture.

\begin{conj} \label{ournagaoconj}
    Let $\mathscr{X}\to\mathbb{P}_\QQ^1$ be as above, with generic fiber $\mathcal{X}/\QQ(T)$. Let $J_\mathcal{X}$ be the Jacobian variety of $\mathcal{X}/\QQ(T)$, and suppose $\tau(B(\QQ))$ is trivial. Then \begin{equation}
    \lim_{N\to\infty}\frac{1}{N}\sum_{\substack{p\notin S \\ p\leq N}}-A_p^\ast(\mathscr{X})\log p \ = \ \rank\left(J_\mathcal{X}(\QQ(T))\right)\mbox{.}
    \end{equation}
\end{conj}

It is immediate that if $\mathscr{X}$ satisfies one of the four conditions above, then Conjecture \ref{ournagaoconj} is true. For the rest of the paper, we refer to Conjecture \ref{ournagaoconj} as the \textit{generalized Nagao's conjecture}, and for $t\in\mathbb{F}_p$, at which the specialization has good reduction, we identify $\mathcal{X}_t$ with $\mathscr{X}_{p,t}$ and write $\#\mathcal{X}_t(\mathbb{F}_p) = \#\mathscr{X}_{p,t}(\mathbb{F}_p)$. Whenever there is no confusion, we use $\mathcal{X}$ both for the hyperelliptic curve over $\QQ(T)$ and for its associated smooth projective fibered surface over $\QQ$. In particular, we say that a family $\mathcal{X}/\QQ(T)$ is $\QQ$-rational if its associated smooth projective fibered surface is $\QQ$-rational.

\section{Preliminaries}

We establish some tools used throughout the paper.

\begin{lem}\cite{Nagao1997} \label{chebyshevavg}
    Let $\{c_p\}$ be a bounded sequence of nonnegative numbers indexed by prime numbers. If one of the sequences \begin{equation}
    \left\{\frac{1}{\pi(N)}\sum_{p\leq N}c_p\right\}_N\quad\mbox{or}\quad\left\{\frac{1}{N}\sum_{p\leq N}c_p\log p\right\}_N
    \end{equation}
    converges, then both of them converge to a common limit.
\end{lem}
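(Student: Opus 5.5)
This is the standard comparison between the unweighted prime average and the $\log p$-weighted average. I would prove it by Abel (partial) summation together with the prime number theorem in the forms $\pi(N)\sim N/\log N$ and $\theta(N)=\sum_{p\le N}\log p\sim N$. Write
\[
S(N)=\sum_{p\le N}c_p,\qquad W(N)=\sum_{p\le N}c_p\log p,
\]
and let $0\le c_p\le M$.

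\textbf{From the unweighted to the weighted average.} Suppose $S(N)/\pi(N)\to L$. Partial summation gives
\[
W(N)=S(N)\log N-\int_2^N \frac{S(u)}{u}\,du .
\]
By the prime number theorem, the first term is $(L+o(1))\pi(N)\log N=(L+o(1))N$. For the integral, use the bound $S(u)\le M\pi(u)\ll u/\log u$, which gives
\[
\int_2^N \frac{S(u)}{u}\,du\ll \int_2^N\frac{du}{\log u}=O\!\left(\frac{N}{\log N}\right)=o(N).
\]
Hence $W(N)/N\to L$.

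\textbf{From the weighted to the unweighted average.} Suppose $W(N)/N\to L$. Partial summation in the other direction gives
\[
S(N)=\frac{W(N)}{\log N}+\int_2^N\frac{W(u)}{u\log^2 u}\,du .
\]
Since $W(u)\le M\theta(u)\ll u$, the integral is
\[
\ll \int_2^N\frac{du}{\log^2 u}=O\!\left(\frac{N}{\log^2 N}\right).
\]
The first term is $(L+o(1))N/\log N$. Dividing by $\pi(N)\sim N/\log N$, the first term tends to $L$ and the integral contributes $O(1/\log N)\to 0$. Hence $S(N)/\pi(N)\to L$.

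The only mildly delicate point is justifying that the error integrals are $o(N)$ and $o(N/\log N)$ respectively. This is exactly where boundedness of $c_p$ is used, to dominate $S$ and $W$ by $\pi$ and $\theta$. Nonnegativity is not really needed, and I expect no real obstacle in either direction.
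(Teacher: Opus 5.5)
Your argument is correct. Both Abel summation identities are right. The error integrals are controlled using only boundedness of $c_p$ and the Chebyshev bounds $\pi(u)\ll u/\log u$ and $\theta(u)\ll u$. The prime number theorem $\pi(N)\sim N/\log N$ enters exactly where it must: in turning $S(N)\log N$ into $(L+o(1))N$, and in dividing $W(N)/\log N$ by $\pi(N)$.

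The paper gives no proof of its own; it only cites Nagao. Your partial-summation argument is the standard proof of this fact. The appeal to the prime number theorem cannot be avoided: taking $c_p\equiv 1$ turns the lemma into $\theta(N)\sim N$, which is exactly how the paper uses it immediately afterwards. You are also right that nonnegativity of $c_p$ plays no role.
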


The lemma above directly shows that $\{\frac{1}{N}\sum_{p\leq N}\log p\}_N$ converges to $1$. We now cite a work from Zarhin, which is a sufficient condition for conjecture \ref{ournagaoconj}.

\begin{lem}\cite{zarhin1999hyperellipticjacobianscomplexmultiplication} \label{zarhin}
    Let $K$ be a field with $\operatorname{char}(K) \neq 2$, and $\overline{K}$ an algebraic closure of $K$. Let $f(x)\in K[x]$ be irreducible of degree $n\geq 5$ such that the Galois group of $f$ is either $S_n$ or $A_n$. Let $C_f$ be a hyperelliptic curve $y^2 = f(x)$. Let $J(C_f)$ be its Jacobian variety and $\operatorname{End}(J(C_f))$ the ring of $\overline{K}$-endomorphisms of $J(C_f)$. Then either $\operatorname{End}(J(C_f)) = \mathbb{Z}$ or $\operatorname{char}(K)>0$ and $J(C_f)$ is a supersingular abelian variety.
\end{lem}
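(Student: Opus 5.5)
This is a result cited from Zarhin, but I would reprove it by studying the Galois module $J[2]$, where $J=J(C_f)$. Write $\mathfrak{R}\subset\overline{K}$ for the $n$ roots of $f$ and $G=\gal(f)\subseteq\operatorname{Perm}(\mathfrak{R})$, which is $S_n$ or $A_n$.

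\textbf{Step 1: describe $J[2]$.} The divisor classes $[(\alpha,0)-\infty]$ (odd $n$), or their differences (even $n$), generate $J[2]$. Hence $J[2]$ is $G$-isomorphic to the heart of the permutation module:
\begin{equation}
Q_{\mathfrak{R}} \ = \ \Bigl\{\sum_{\alpha} c_\alpha e_\alpha \in \mathbb{F}_2^{\mathfrak{R}} \,:\, \sum_\alpha c_\alpha = 0\Bigr\} \big/ (\text{constants if } 2\mid n).
\end{equation}
Its $\mathbb{F}_2$-dimension is $2g$.

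\textbf{Step 2: show $Q_{\mathfrak{R}}$ is as rigid as possible.} Since $A_n$ with $n\geq 5$ is doubly (indeed $(n-2)$-fold) transitive, I would prove that $Q_{\mathfrak{R}}$ is absolutely simple over $\mathbb{F}_2[A_n]$ and that $\operatorname{End}_{A_n}(Q_{\mathfrak{R}})=\mathbb{F}_2$. This is a Mortimer-type computation for the heart of a permutation module.

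Zarhin's stronger property is that $Q_{\mathfrak{R}}$ is \emph{very simple}: whenever $R\subseteq \operatorname{End}_{\mathbb{F}_2}(Q_{\mathfrak{R}})$ is a $G$-stable subalgebra (stable under conjugation by $G$), either $R$ is $\mathbb{F}_2$ or $R$ is all of $\operatorname{End}(Q_{\mathfrak{R}})$. I would verify this by using $A_n$ simplicity together with the fact that the $G$-module $\operatorname{End}(Q_{\mathfrak{R}})\cong Q_{\mathfrak{R}}\otimes Q_{\mathfrak{R}}^*$ has a very short list of $G$-submodules. Concretely, a $G$-stable subalgebra is a $G$-submodule of $\operatorname{End}(Q_{\mathfrak{R}})$, and I would try to identify these submodules through the permutation module on pairs of roots.

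\textbf{Step 3: pass to $\overline{K}$-endomorphisms.} All endomorphisms of $J$ are defined over a finite Galois extension $L/K$. Then $\gal(L/K)$ acts on $\operatorname{End}(J)$, and the natural map $\operatorname{End}(J)\otimes\mathbb{Z}/2\hookrightarrow\operatorname{End}_{\mathbb{F}_2}(J[2])$ is injective with image $R$ stable under the image of $\gal(\overline{K}/K)$, namely $G$. By Step 2, either $R=\mathbb{F}_2$, or $\operatorname{End}(J)\otimes\mathbb{Z}/2=\operatorname{End}(J[2])$.
\begin{itemize}
\item If $R=\mathbb{F}_2$, the free $\mathbb{Z}$-module $\operatorname{End}(J)$ has rank $1$, so $\operatorname{End}(J)=\mathbb{Z}$.
\item Otherwise $\operatorname{rank}_{\mathbb{Z}}\operatorname{End}(J)=(2g)^2$. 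The Albert classification, via Rosati positivity, bounds $\dim_\mathbb{Q}\operatorname{End}^0(J)\leq 2g^2<4g^2$ in characteristic $0$. Equality $(2g)^2$ can occur only when $J$ is isogenous to $E^g$ with $E$ a supersingular elliptic curve, which forces $\operatorname{char}(K)>0$ and $J$ supersingular.
\end{itemize}

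\textbf{Main obstacle.} I expect Step 2, the very-simplicity of $Q_{\mathfrak{R}}$ under conjugation by $G$, to be the hardest. The subtlety is that the field $L$ may kill part of the Galois image, so the argument must use only $G$-stability of $R$ rather than a $\gal(\overline{K}/L)$-equivariance. Everything else is formal.
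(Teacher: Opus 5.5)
The paper gives no proof of this lemma; it only cites Zarhin. Your Steps 1 and 3 match the skeleton of Zarhin's argument:
\begin{itemize}
\item $J[2]\cong Q_{\mathfrak R}$;
\item the injection $\operatorname{End}(J)\otimes\ZZ/2\hookrightarrow\operatorname{End}_{\mathbb{F}_2}(J[2])$, with image stable under conjugation by $G$ (this uses $\operatorname{char}K\neq 2$: an endomorphism killing $J[2]$ factors through the separable isogeny $[2]$);
\item the dichotomy between $\ZZ$-rank $1$ and $\ZZ$-rank $(2g)^2$.
\end{itemize}

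\textbf{The gap is Step 2.} It carries the whole content of the theorem, and you only announce it. Worse, the route you announce rests on a false premise: in characteristic $2$ the $G$-module $\operatorname{End}(Q_{\mathfrak R})\cong Q_{\mathfrak R}\otimes Q_{\mathfrak R}^\ast$ does not have a short list of submodules. The form $\langle e_\alpha,e_\beta\rangle=\delta_{\alpha\beta}$ induces a nondegenerate $G$-invariant form on $Q_{\mathfrak R}$, and it is alternating there, since $\langle v,v\rangle=\sum_\alpha c_\alpha=0$. Two $G$-submodules containing $\mathrm{Id}$ follow at once:
\begin{itemize}
\item the operators $u$ with $\langle uv,v\rangle=0$ for all $v$, of dimension $g(2g-1)$;
\item the self-adjoint operators, of dimension $g(2g+1)$.
\end{itemize}
So no classification of $G$-submodules through $\mathrm{Id}$ can by itself give the dichotomy. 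You must use that $R$ is closed under multiplication.

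\textbf{The missing idea} is a structural analysis of $R$ as an algebra. Very simplicity for a subgroup implies it for the group, so take $G=A_n$.
\begin{itemize}
\item[(a)] $\operatorname{rad}(R)$ is stable under conjugation, so $\operatorname{rad}(R)Q_{\mathfrak R}$ is a $G$-submodule. It is proper by nilpotence, hence $0$, so $R$ is semisimple.
\item[(b)] $G$ permutes the isotypic components of the $R$-module $Q_{\mathfrak R}$ transitively. There are at most $2g<n$ of them, and $A_n$ has no proper subgroup of index $<n$. Hence there is one component and $R$ is simple.
\item[(c)] $G$ acts on the finite field $Z(R)$ through a cyclic group, hence trivially because $A_n$ is perfect. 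So $Z(R)\subseteq\operatorname{End}_{A_n}(Q_{\mathfrak R})=\mathbb{F}_2$ and $R\cong M_d(\mathbb{F}_2)$.
\item[(d)] By Skolem--Noether, and because $\mathbb{F}_2^\times=1$, conjugation by $G$ on $R$ and on its centralizer comes from genuine representations. Thus $Q_{\mathfrak R}\cong W\otimes W'$ as $A_n$-modules, with $\dim W=d$.
\end{itemize}
Now suppose $1<d<2g$. Both factors then carry nontrivial, hence faithful, $A_n$-actions, since otherwise $Q_{\mathfrak R}$ would not be simple. The smaller factor has dimension $m$ with $m^2\le 2g\le n-1$. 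But $|\mathrm{GL}_m(\mathbb{F}_2)|<2^{m^2}\le 2^{n-1}\le n!/2$, so $A_n$ cannot embed in $\mathrm{GL}_m(\mathbb{F}_2)$, a contradiction. Hence $R=\mathbb{F}_2$ or $R=\operatorname{End}(Q_{\mathfrak R})$. With this in place of your Step 2, the rest of your proposal goes through.

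A minor point in Step 3: Rosati positivity holds in every characteristic, so it cannot by itself give $\dim_{\QQ}\operatorname{End}^0(J)\le 2g^2$. In characteristic $0$ that bound comes from $\operatorname{End}^0(J)\otimes\mathbb{R}\hookrightarrow M_g(\mathbb{C})$ via the Lefschetz principle, or equivalently from the characteristic-$0$ restrictions in Albert's table.
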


Next, we cite a result from Galois theory to prove the hypothesis for Lemma \ref{zarhin}. For the proofs of these results, see \cite{serre2016topics}.

\begin{lem}[Hilbert] \label{hilbertirreducibility}
    If a finite group $G$ can be realized as the Galois group of a Galois extension $N$ of $E = \QQ(X_1,\dots, X_r)$, then there are infinitely many specializations $(x_1,\dots,x_r)$ such that the Galois extension $N_0$ of $\QQ$ has Galois group $G$.
\end{lem}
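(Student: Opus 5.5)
This is Hilbert's irreducibility theorem in Serre's Galois-theoretic form. I would prove it by passing to the polynomial form of Hilbert irreducibility: specializations in a thin set preserve nothing, and outside a thin set irreducibility is preserved.

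\textbf{Primitive element.} Since $N/E$ is a finite Galois extension with group $G$, choose a primitive element $\theta\in N$ that is integral over $\QQ[X_1,\dots,X_r]$. Let $P(X_1,\dots,X_r,Y)\in\QQ[X_1,\dots,X_r][Y]$ be its minimal polynomial, which is irreducible of degree $|G|$. Its splitting field is $N$, since $N/E$ is Galois and $N=E(\theta)$.

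\textbf{Hilbert's irreducibility theorem proper.} For $\QQ$ (a Hilbertian field), the set of $(x_1,\dots,x_r)\in\QQ^r$ at which $P(x,Y)$ remains irreducible is the complement of a thin set. In particular it is infinite; in fact it is Zariski dense. This is the core analytic or geometric input and is the main obstacle. For $r=1$ one proves it via Puiseux expansions, counting integer points near branches of the curve $P(x,y)=0$: a reducible specialization forces a factor's coefficient, an algebraic function of $x$, to take rational values, which happens only $O(\sqrt{H})$ often among specializations of height at most $H$. One then reduces general $r$ to $r=1$ by a Kronecker substitution. I would cite Serre for this step as the paper does.

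\textbf{Transfer to Galois groups.} I would remove the finitely many bad loci: the zero sets of the discriminant of $P$ in $Y$, of the leading coefficient, and of denominators of the expressions of the conjugates $\sigma(\theta)=R_\sigma(X,\theta)$ for $\sigma\in G$. For a good specialization $x$ with $P(x,Y)$ irreducible, let $\theta_0$ be a root. The relations $\sigma(\theta)=R_\sigma(X,\theta)$ specialize, so all roots of $P(x,Y)$ lie in $\QQ(\theta_0)$. Hence $N_0=\QQ(\theta_0)$ is Galois of degree $|G|$. The decomposition group at the specialization embeds $\gal(N_0/\QQ)$ into $G$, and comparing orders gives equality.

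Infinitely many specializations survive, because a thin set together with a proper Zariski-closed set still cannot cover $\QQ^r$. This yields the desired infinitude.
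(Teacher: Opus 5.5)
Your proposal is correct and is essentially the argument the paper relies on. The paper gives no proof of its own and defers to Serre \cite{serre2016topics}, who obtains the statement the same way you do: from Hilbert's irreducibility theorem (that $\QQ^r$ is not thin), plus a specialization argument at unramified points showing that the decomposition group is all of $G$. Serre phrases this last step via the quotient covers by maximal subgroups $H\subsetneq G$ rather than the minimal polynomial of a primitive element, but the two are equivalent.

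One small precision: ``a thin set together with a proper closed set cannot cover $\QQ^r$'' by itself only yields one good specialization. Infinitude follows because a finite set is also thin (or from the Zariski density you already noted).
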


\begin{defn}[Morse Polynomial]
    A polynomial $f(x)\in\QQ[x]$ is called \textbf{Morse} if for all distinct roots $\alpha$, $\beta\in\CC$ of $f'(x)$, $f''(\alpha)\neq 0$, and $f(\alpha)\neq f(\beta)$.
\end{defn}

\begin{lem} \label{morselemma}
    Let $f(x)$ be a degree $n$ polynomial. If $f(x)\in \QQ[x]$ is Morse, then $\gal(f(x)+T) \cong S_n$ over $\QQ(T)$.
\end{lem}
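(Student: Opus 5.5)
The strategy is the classical one (Serre, \emph{Topics in Galois Theory}, §4.4). Let $K=\QQ(T)$ and consider the polynomial $F(x)=f(x)+T\in K[x]$. We show three things: $F$ is separable and irreducible; its Galois group $G$ over $\overline{\QQ}(T)$ is generated by transpositions; and $G$ acts transitively on the roots. A transitive subgroup of $S_n$ generated by transpositions is all of $S_n$. The Galois group over $\QQ(T)$ sits between $G$ and $S_n$, so it is $S_n$ as well.

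\textbf{Irreducibility and transitivity.} Since $F$ has degree $1$ in $T$, it is irreducible in $\overline{\QQ}[x,T]$. By Gauss's lemma it is then irreducible over $\overline{\QQ}(T)$, so $G$ acts transitively on its $n$ roots. Separability holds because $\partial F/\partial x=f'(x)$ is coprime to $f(x)+T$ in $\overline{\QQ}(T)[x]$.

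\textbf{Ramification and inertia.} Let $x_1,\dots,x_{n-1}$ be the roots of $f'$. The Morse condition $f''(x_i)\neq 0$ makes them distinct, and $f(\alpha)\neq f(\beta)$ makes the critical values $t_i=-f(x_i)$ pairwise distinct. The roots of $F$ are the points of the fiber of the degree-$n$ cover $\PP^1_x\to\PP^1_T$, $x\mapsto -f(x)$. This cover is branched only at the $t_i$ and at $\infty$.

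Fix a finite branch point $t_i$. Over it there is exactly one critical point, $x_i$, and $f''(x_i)\neq0$ means it has ramification index exactly $2$. Every other point of the fiber is unramified. Hence, in the Galois closure, an inertia generator at $t_i$ acts on the $n$ roots as a single transposition. Over $\infty$ the cover is totally ramified, with one point of index $n$, so that inertia generator is an $n$-cycle.

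\textbf{Generation by inertia.} Since $\PP^1_{\overline{\QQ}}$ is simply connected, $G$ is generated by its inertia groups; equivalently, by Riemann's existence theorem, the branch cycles $\sigma_1,\dots,\sigma_{n-1},\sigma_\infty$ generate $G$ and satisfy $\sigma_1\cdots\sigma_{n-1}\sigma_\infty=1$. Thus $G$ is generated by the transpositions $\sigma_i$ together with $\sigma_\infty$. The product relation expresses $\sigma_\infty$ in terms of the $\sigma_i$, so $G$ is in fact generated by the $n-1$ transpositions alone. Combined with transitivity, the standard fact above gives $G=S_n$.

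\textbf{Main obstacle.} The delicate step is the inertia computation: checking that inertia at $t_i$ really is a single transposition. This uses both Morse hypotheses. Distinct critical values ensure only one ramified point lies over $t_i$, and nondegeneracy ensures its index is $2$ and not higher.

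Alternatively, one can argue purely algebraically with Puiseux expansions at $T=t_i$. Near $x_i$ one has $f(x)+T\approx (T-t_i)+\tfrac{1}{2}f''(x_i)(x-x_i)^2$. This gives two conjugate roots $x_i\pm c\sqrt{t_i-T}+\cdots$, and the other $n-2$ roots are analytic in $T$. Hence the local monodromy is a transposition.
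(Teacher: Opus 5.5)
Your proposal is correct and is essentially the standard argument from Serre's \emph{Topics in Galois Theory} (\S4.4), which the paper cites rather than proving the lemma itself. The argument runs as follows: the cover $x\mapsto -f(x)$ has transposition inertia over the distinct critical values, the geometric Galois group is generated by these inertia elements, and a transitive group generated by transpositions is $S_n$. The only cosmetic difference is that Serre discards the inertia at $\infty$ by using that $\mathbb{A}^1$ is simply connected, whereas you eliminate $\sigma_\infty$ via the branch-cycle product relation; the two routes are equivalent.
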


Finally, we cite some tools from number theory used to calculate the first moment. For the proofs, see \cite{berndt1998gauss} and \cite{davenport1980multiplicative}.

\begin{lem} \label{linlegendre}
    Fix a prime $p>2$. If $a$ and $b$ are integers such that $p\nmid a$, then \begin{equation}
    \sum_{t=0}^{p-1}\left(\frac{at+b}{p}\right) \ = \ 0\mbox{.}
    \end{equation}
\end{lem}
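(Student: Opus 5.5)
The plan is to exploit the fact that as $t$ runs over $\mathbb{F}_p$, the value $u = at+b$ runs over every residue class modulo $p$ exactly once. That turns the sum into a complete character sum, which vanishes by orthogonality.

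First, since $p \nmid a$, the element $a$ is invertible in $\mathbb{F}_p$. Hence the affine map $t \mapsto at+b$ is a bijection of $\mathbb{Z}/p\mathbb{Z}$ onto itself, with inverse $u \mapsto a^{-1}(u-b)$. The Legendre symbol $\left(\frac{\cdot}{p}\right)$ depends only on the residue class modulo $p$, so we may reindex:
\begin{equation}
\sum_{t=0}^{p-1}\left(\frac{at+b}{p}\right) \ = \ \sum_{u=0}^{p-1}\left(\frac{u}{p}\right).
\end{equation}

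Second, we evaluate the complete sum. The term $u=0$ contributes $0$. Since $p>2$, the multiplicative group $\mathbb{F}_p^\times$ is cyclic of even order $p-1$, so the nonzero squares form the index-two subgroup. Thus exactly $(p-1)/2$ nonzero residues are quadratic residues, each contributing $+1$, and the remaining $(p-1)/2$ are non-residues, each contributing $-1$. The sum is therefore $0$.

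Alternatively, one can argue by orthogonality. Choose a non-residue $c$. Multiplication by $c$ permutes $\mathbb{F}_p^\times$, so the sum $S$ satisfies $S = \left(\frac{c}{p}\right) S = -S$, which forces $S = 0$.

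Neither step presents a real obstacle. The only point requiring care is the hypothesis $p \nmid a$, which is exactly what makes the reindexing a bijection. If $p \mid a$, the sum would instead equal $p\left(\frac{b}{p}\right)$, which is generally nonzero. The hypothesis $p > 2$ is what guarantees that non-residues exist.
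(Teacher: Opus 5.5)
Your proposal is correct and follows the paper's proof. The paper likewise uses $p\nmid a$ to make $t\mapsto at+b$ a bijection of $\mathbb{F}_p$, reducing to the complete sum $\sum_{u=0}^{p-1}\left(\frac{u}{p}\right)=0$; you additionally justify why that complete sum vanishes, which the paper takes as known.
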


\begin{proof}
    Since $p\nmid a$, as $t$ runs over all elements of $\mathbb{F}_p$, so does $at+b$. Hence $\sum_{t=0}^{p-1}\left(\frac{at+b}{p}\right) = \sum_{t=0}^{p-1}\left(\frac{t}{p}\right)=0$.
\end{proof}

\begin{lem} \label{quadlegendre}
    Fix a prime $p>2$. If $a$, $b$, and $c$ are integers such that $p\nmid a$, then \begin{equation}
    \sum_{t=0}^{p-1}\left(\frac{at^2+bt+c}{p}\right) \ = \ \begin{cases}
        (p-1)\left(\frac{a}{p}\right) & p\mid b^2-4ac \\ -\left(\frac{a}{p}\right) & p\nmid b^2-4ac
    \end{cases}\mbox{.}
    \end{equation}
\end{lem}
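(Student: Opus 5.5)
The plan is to complete the square and then shift the summation variable, using Lemma \ref{linlegendre} as a model. Since $p>2$ and $p\nmid a$, both $2$ and $a$ are invertible modulo $p$. I will multiply the argument by $4a$:
\begin{equation}
4a(at^2+bt+c) \ = \ (2at+b)^2 - (b^2-4ac)\mbox{.}
\end{equation}
Multiplicativity of the Legendre symbol then gives
\begin{equation}
\left(\frac{at^2+bt+c}{p}\right) \ = \ \left(\frac{a}{p}\right)\left(\frac{(2at+b)^2-D}{p}\right),\qquad D=b^2-4ac,
\end{equation}
because $\left(\frac{4a}{p}\right)=\left(\frac{a}{p}\right)$ and $\left(\frac{4a}{p}\right)^{-1}=\left(\frac{4a}{p}\right)$. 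As $t$ runs over $\mathbb{F}_p$, so does $u=2at+b$. The problem therefore reduces to evaluating $S(D)=\sum_{u\in\mathbb{F}_p}\left(\frac{u^2-D}{p}\right)$.

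\textbf{Case $p\mid D$.} Here $S(D)=\sum_u\left(\frac{u^2}{p}\right)$. Each of the $p-1$ nonzero values of $u$ contributes $1$, and $u=0$ contributes $0$. So $S(D)=p-1$, which gives the first case of the formula.

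\textbf{Case $p\nmid D$.} This is the main step. I will count solutions of $v^2=u^2-D$ over $\mathbb{F}_p$. For each $u$, the number of $v$ with $v^2=u^2-D$ is $1+\left(\frac{u^2-D}{p}\right)$. Summing over $u$ gives
\begin{equation}
\#\{(u,v): u^2-v^2=D\} \ = \ p+S(D)\mbox{.}
\end{equation}
On the other hand, the substitution $x=u-v$, $y=u+v$ is invertible since $2$ is invertible. It turns the equation into $xy=D$, which has exactly $p-1$ solutions because $D\neq 0$. Comparing the two counts gives $S(D)=-1$, and multiplying by $\left(\frac{a}{p}\right)$ yields the second case. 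The only point that needs care is this point count; the rest is bookkeeping.
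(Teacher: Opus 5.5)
Your proof is correct and complete. The identity $4a(at^2+bt+c)=(2at+b)^2-D$, the change of variable $u=2at+b$, the case $p\mid D$, and the count $p+S(D)=\#\{(x,y): xy=D\}=p-1$ via the invertible substitution $(u,v)\mapsto(u-v,u+v)$ all check out.

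The paper gives no proof of this lemma; it only cites Berndt--Evans--Williams and Davenport, so there is no in-paper argument to compare against.

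The usual textbook route shares your completing-the-square step. It then evaluates $S(D)$ by a character-sum manipulation rather than a point count. Grouping $u$ by $w=u^2$ gives $S(D)=\sum_{w}\left(1+\left(\frac{w}{p}\right)\right)\left(\frac{w-D}{p}\right)=\sum_{w\neq 0}\left(\frac{1-Dw^{-1}}{p}\right)$, where Lemma \ref{linlegendre} kills the first piece. For $D\neq 0$ the quantity $1-Dw^{-1}$ runs over $\mathbb{F}_p\setminus\{1\}$, giving $-1$. For $D=0$ the same expression gives $p-1$, so both cases come from one computation. That version stays in the language of Lemma \ref{linlegendre} and is the one that generalizes to Jacobi sums.

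Your version is more geometric: $p+S(D)$ is the number of affine points on the split conic $u^2-v^2=D$. In that light you could even skip completing the square. When $p\nmid b^2-4ac$, the projective closure of $v^2=at^2+bt+c$ is a smooth conic with $p+1$ points, exactly $1+\left(\frac{a}{p}\right)$ of them at infinity, which yields $-\left(\frac{a}{p}\right)$ directly.
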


\begin{lem} \label{pntap}
    Let $\pi(x;q,a) = \#\{p\leq x\,|\, p\equiv a\pmod{q}\mbox{,}\ p\ \mbox{prime}\}$. Then \begin{equation}
    \pi(x;q,a) \ \sim \ \frac{1}{\varphi(q)}\cdot\frac{x}{\log x}\mbox{.}
    \end{equation}
    In particular, $\pi(x) = \#\{p\leq x\,|\, p\ \mbox{prime}\} \sim x/\log x$.
\end{lem}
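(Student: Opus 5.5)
We prove the prime number theorem for arithmetic progressions in the stated form, for fixed $q$ and $\gcd(a,q)=1$ (for other residues $a$ there is at most one prime, and the statement is vacuous). The standard route goes through the Chebyshev-type function $\psi(x;q,a)=\sum_{n\leq x,\, n\equiv a \ (q)}\Lambda(n)$. First, we use orthogonality of Dirichlet characters modulo $q$ to write
\begin{equation*}
\psi(x;q,a) \ = \ \frac{1}{\varphi(q)}\sum_{\chi \bmod q}\overline{\chi}(a)\,\psi(x,\chi),\qquad \psi(x,\chi) \ = \ \sum_{n\leq x}\chi(n)\Lambda(n).
\end{equation*}
It then suffices to show two things: $\psi(x,\chi_0)\sim x$ for the principal character, and $\psi(x,\chi)=o(x)$ for every $\chi\neq\chi_0$. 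The principal case differs from the ordinary $\psi(x)$ only by the prime powers dividing $q$, a contribution of $O(\log x)$, so it reduces to the ordinary prime number theorem.

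For nonprincipal $\chi$, we follow the analytic argument in \cite{davenport1980multiplicative}. The plan has four steps:
\begin{itemize}
\item[(i)] $L(s,\chi)$ continues analytically to $\Re s>0$ (indeed, it is entire).
\item[(ii)] $L(1+it,\chi)\neq 0$ for all real $t$. This comes from the $3$--$4$--$1$ inequality $|L(\sigma,\chi_0)^3L(\sigma+it,\chi)^4L(\sigma+2it,\chi^2)|\geq 1$ for $\sigma>1$, applied with care when $\chi^2=\chi_0$.
\item[(iii)] A Tauberian theorem (Wiener--Ikehara, or Newman's contour argument) applied to $-L'/L(s,\chi)$, combined with the corresponding statement for $\zeta$, gives $\psi(x,\chi)=o(x)$.
\item[(iv)] Partial summation converts $\psi(x;q,a)\sim x/\varphi(q)$ into $\pi(x;q,a)\sim \frac{1}{\varphi(q)}\frac{x}{\log x}$, since prime powers contribute only $O(\sqrt{x}\log x)$.
\end{itemize}
The "in particular" statement is the case $q=1$.

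The main obstacle is step (ii) at $t=0$, that is, $L(1,\chi)\neq0$ for real nonprincipal $\chi$. The $3$--$4$--$1$ trick fails here because $\chi^2=\chi_0$ gives a pole that cancels the zero. For this case we would use the classical argument: the product $\zeta(s)L(s,\chi)$ is the Dirichlet series of $r(n)=\sum_{d\mid n}\chi(d)\geq 0$, with $r(n^2)\geq 1$. If $L(1,\chi)=0$, this product would be analytic on $\Re s>0$. Landau's theorem on Dirichlet series with nonnegative coefficients would then force the series to converge for all $\Re s>0$, contradicting its divergence at $s=1/2$. Since this is a classical result, in the paper we may simply cite \cite{davenport1980multiplicative} rather than reproduce all the details.
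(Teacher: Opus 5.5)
Your proposal is correct and ends where the paper does: the paper gives no argument of its own and simply cites \cite{davenport1980multiplicative} and \cite{berndt1998gauss}. Your Tauberian outline is a valid standard route, with two caveats. Davenport actually uses the explicit formula and a zero-free region rather than Wiener--Ikehara. Also, since $\psi(x,\chi)$ is not monotone, the Tauberian step must be applied to nondecreasing functions: either $\psi(x)$ plus the real part of $\psi(x,\chi)$, or $\psi(x)$ plus its imaginary part, or $\psi(x;q,a)$ directly. That is presumably what your ``combined with $\zeta$'' means.

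One correction: when $\gcd(a,q)>1$ the asymptotic is not vacuous but false, since then $\pi(x;q,a)\leq 1$. So the hypothesis $\gcd(a,q)=1$ must be added to the lemma as stated. This is harmless here, because every application in the paper (e.g.\ $q=4$, $a\in\{1,3\}$) uses residues coprime to the modulus.
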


\begin{lem} \cite{helfgott2014} \label{goldbach}
    Every odd integer $n$ greater than $5$ can be expressed as the sum of three primes.
\end{lem}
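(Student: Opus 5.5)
The statement to prove is Lemma \ref{goldbach}: every odd integer $n>5$ is a sum of three primes. This is Helfgott's ternary Goldbach theorem and is cited rather than reproved here. Still, the plan I would follow to establish it is the Hardy--Littlewood--Vinogradov circle method, completed by a finite computation.

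\textbf{Setup.} Write
\begin{equation}
R(n) \ = \ \sum_{p_1+p_2+p_3=n}\log p_1\log p_2\log p_3 \ = \ \int_0^1 S(\alpha)^3 e(-n\alpha)\,d\alpha,
\end{equation}
where $S(\alpha)=\sum_{p\le n}\log p\, e(p\alpha)$. I would split $[0,1)$ into major arcs (near rationals $a/q$ with small $q$) and minor arcs.

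\textbf{Major arcs.} On the major arcs, I would use the prime number theorem in arithmetic progressions (Lemma \ref{pntap}, in an explicit form with error terms) together with Dirichlet $L$-function zero-free regions. This gives the main term $\mathfrak{S}(n)\,n^2/2$. The singular series $\mathfrak{S}(n)$ is an Euler product that is bounded below by a positive absolute constant when $n$ is odd.

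\textbf{Minor arcs.} On the minor arcs, I would bound $\sup|S(\alpha)|$ by Vaughan's identity plus a large-sieve or Vinogradov-type estimate. I would then use
\begin{equation}
\int_{\mathfrak{m}}|S|^3 \ \le \ \sup_{\mathfrak{m}}|S|\cdot\int_0^1|S|^2 \ \ll \ \sup_{\mathfrak{m}}|S|\cdot n\log n
\end{equation}
to show the minor-arc contribution is $o(n^2)$. This yields $R(n)>0$ for all odd $n\ge N_0$.

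\textbf{Main obstacle and small $n$.} The real difficulty, which is Helfgott's contribution, is making every constant explicit and sharp enough that $N_0$ drops to around $10^{27}$. That requires fully explicit minor-arc bounds with smoothing weights and rigorous verification of GRH for Dirichlet $L$-functions of small modulus up to a fixed height, which controls the major arcs. For $5<n<N_0$, I would verify the statement by computer: check even Goldbach up to about $4\cdot 10^{18}$, build a chain of primes with gaps below that bound, and write $n=p+(n-p)$ with $n-p$ a checked even number. In this paper the result is used only as a black box.
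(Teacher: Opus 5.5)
Your proposal agrees with the paper, which does not prove this lemma but cites Helfgott's theorem as a black box. Your outline is an accurate summary of that cited work: the circle method with fully explicit major- and minor-arc estimates covers $n\ge 10^{27}$, using a rigorous GRH verification for small moduli, and for smaller $n$ the Helfgott--Platt computation uses binary Goldbach up to $4\cdot 10^{18}$ together with a ladder of primes.
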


The lemma above is also known as the \textit{weak Goldbach conjecture}, which was proved by Helfgott in 2013. In fact, we use a stronger result which immediately follows, that every odd integer $n$ greater than $5$ can be expressed as the sum of three primes, not all equal.

\section{Hyperelliptic Curves with Low-Ranks}

We focus on low and mid-level ranks. The authors of \cite{19smallnagao} proved that under the assumption of the generalized Nagao's conjecture, there exists a hyperelliptic curve $\mathcal{X}$ of genus $g$ with $\rank J_\mathcal{X}(\QQ(T))=2g$. We extend their result, using a similar construction, to prove that there are infinitely many hyperelliptic curves of genus $g$ with the specified ranks using the generalized Nagao's conjecture. Furthermore, we show that the surface is rational; thus the result is not conditional on generalized Nagao's conjecture.

\lowrankthm*

To prove this, we need two lemmas.

\begin{lem} \label{low rank even}
    Let $(q_1,\dots,q_k)$ be a $k$-tuple of positive integers such that for at least one $i$, $q_i$ is a prime not dividing $n= \sum q_i$. Then there exist infinitely many pairwise distinct $k$-tuples $(a_1,\dots,a_k)$ such that $\gal(f(x)+T)\cong S_n$ or $A_n$ over $\mathbb{Q}(T)$, where \begin{equation}
    f(x) \ = \ (x^{q_1}-a_1^{q_1})(x^{q_2}-a_2^{q_2})\cdots(x^{q_k}-a_k^{q_k})\mbox{.}
    \end{equation}
\end{lem}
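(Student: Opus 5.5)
The plan is to show that for a suitable choice of $(a_1,\dots,a_k)$ the Galois group of $f(x)+T$ over $\QQ(T)$ is a primitive subgroup of $S_n$ containing a transposition, or a $p$-cycle with $p$ prime and $p\le n-3$. By Jordan's theorem this forces the group to be $A_n$ or $S_n$. To make the choice, I would treat the $a_i$ as indeterminates, prove the statement over $\QQ(A_1,\dots,A_k)(T)$, and then specialize using Lemma \ref{hilbertirreducibility} to get infinitely many pairwise distinct rational tuples.

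\textbf{Step 1: transitivity, via Morse-type behaviour.} The polynomial $f(x)+T$ is linear in $T$, so it is irreducible over $\QQ(a)(T)$. Hence the Galois group $G$ of the splitting field over $\QQ(a)(T)$ acts transitively on the $n$ roots. This is just the geometric monodromy group of the cover $x\mapsto -f(x)$ of $\PP^1$.

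\textbf{Step 2: inertia at the finite branch points.} The branch points are the values $-f(\beta)$, where $f'(\beta)=0$. The factor $x^{q_i}-a_i^{q_i}$ has the root $x=0$ only if $a_i=0$. For generic $a$, the critical points of $f$ other than those forced by symmetry are simple, and their critical values are distinct. Each such branch point then contributes a transposition. This is the same local analysis as in Lemma \ref{morselemma}, except that $f$ need not be globally Morse.

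\textbf{Step 3: inertia at infinity.} At $T=\infty$ the inertia group is an $n$-cycle. If some $q_i=p$ is a prime not dividing $n$, I would instead specialize $a_j\to 0$ for all $j\neq i$. The polynomial becomes $x^{n-p}(x^p-a_i^p)+T$. Its Galois group contains the monodromy around $T=0$, and there the roots split into an $(n-p)$-cycle near $0$ and $p$ simple roots. I would use that specialization only to extract an element whose structure combines with the rest to give primitivity.

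\textbf{Step 4: primitivity.} The main obstacle is proving that $G$ is primitive. Suppose $G$ preserved a block system with blocks of size $d$, where $1<d<n$ and $d\mid n$. The inertia cycle at $0$ for the degenerate polynomial has cycle type $(n-p,1,\dots,1)$ together with the rotation of the $p$ roots of $x^p-a_i^p$. The hypothesis $p\nmid n$ should make this incompatible with any nontrivial block decomposition. Indeed, a $p$-cycle acting on blocks would have to either fix each block or permute $p$ blocks cyclically. The second option requires $p$ blocks all meeting its support, which means $p\mid n$ or the support size is incompatible with $d$. The first option would force the $p$ roots into one block together with the remaining structure, and the $n$-cycle then gives a contradiction. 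Once primitivity and the transposition from Step 2 are in hand, $G\supseteq A_n$. Alternatively, $G=S_n$ directly, by the standard fact that a primitive group containing a transposition is $S_n$.

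Finally, I would apply Lemma \ref{hilbertirreducibility} to the extension over $\QQ(A_1,\dots,A_k,T)$ to specialize the $A_i$ while keeping the group $S_n$ or $A_n$ over $\QQ(T)$. Infinitely many such specializations yield infinitely many pairwise distinct tuples. I expect Step 4 to be the delicate part, and if necessary I would replace it with a direct computation showing that the discriminant in $T$ has a simple root.
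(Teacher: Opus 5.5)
There is a genuine gap: neither ingredient of your Jordan-type criterion is actually established.

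Your primitivity argument in Step~4 rests on a $p$-cycle, the ``rotation of the $p$ roots of $x^p-a_i^p$'', that does not lie in the Galois group. For $g(x)=x^{n-p}(x^p-a_i^p)$, these $p$ roots are simple roots of $g$: indeed $g'(x)=x^{n-p-1}\bigl(nx^p-(n-p)a_i^p\bigr)$ does not vanish there. So they are unramified over $T=0$ and fixed by the inertia, which is just the $(n-p)$-cycle. Nor is $x\mapsto\zeta_p x$ an automorphism of the cover, because $g(\zeta_p x)=\zeta_p^{\,n}g(x)$ with $\zeta_p^{\,n}\neq 1$, precisely since $p\nmid n$.

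Even granting such a $p$-cycle inside one block, the $n$-cycle gives no contradiction. The imprimitive group $S_d\wr S_{n/d}$ contains both an $n$-cycle and a $p$-cycle supported in a single block whenever $p\leq d$. Your fallback, a simple root of the discriminant in $T$, would only give a transposition, not primitivity.

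Step~2 is likewise only asserted. Nothing shows that for generic $a$ some critical point of $f$ is simple with a critical value shared by no other critical point. Moreover, $f$ is genuinely not Morse once $\min_i q_i\geq 3$, since then $x^2\mid f'$.

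The missing idea is to compute the branching of the degenerate polynomial you already isolated; this is where $p\nmid n$ actually enters. The nonzero critical points of $g$ are $\zeta_p^j\beta_0$ with $\beta_0^p=(n-p)a_i^p/n$. Their critical values are $\zeta_p^{jn}g(\beta_0)$, where $g(\beta_0)=-\tfrac{p}{n}a_i^p\beta_0^{n-p}\neq 0$, and these are pairwise distinct exactly because $p\nmid n$. So the monodromy group $G_0$ of $g(x)+T$ is generated by an $(n-p)$-cycle $\sigma$ and $p$ transpositions.

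Now finish with a block argument. The relation ``$i=j$ or $(i\,j)\in G_0$'' is a $G_0$-invariant equivalence relation, and its classes are blocks fixed by every transposition in $G_0$. Since $G_0$ is transitive, $\langle\sigma\rangle$ must then be transitive on the blocks. But $\sigma$ has fixed points and so fixes a block; hence there is a single block and $G_0=S_n$.

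Because $g(x)+T$ is separable, specializing $a_j\to 0$ for $j\neq i$ shows the generic group contains $S_n$. Hilbert irreducibility then finishes as you planned.

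Repaired this way, your route is a geometric alternative to the paper's. The paper specializes further to an integer $T=t$. It proves that the trinomial $x^n-a_1^{q_1}x^{n-q_1}+t$ (with $a_1$ a suitable prime) has Galois group containing $A_n$ over $\QQ$, using irreducibility modulo a prime and a published criterion for trinomials. It then lifts back the same way.
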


\begin{proof}
    Without loss of generality, let $q_1$ be prime not dividing $n$. Define \begin{equation}
    g(x) \ = \ (x^{q_1}-a_1^{q_1})x^{n-q_1}\mbox{,}
    \end{equation}
    noting that $a_i=0$ for $i>1$. We claim that there exist positive integers $a_1$ and $t$ such that $\gal(g(x)+t)\cong S_n$ or $A_n$ over $\QQ$. We can write \begin{equation}
    g(x) + t \ = \ x^n - a_1^{q_1}x^{n-q_1} + t\mbox{.}
    \end{equation}
    Define \begin{equation}
        M \ = \ \begin{cases}
            \operatorname{rad}(n) & 4\nmid n \\
            \lcm(\operatorname{rad}(n),4) & 4\mid n\mbox{.}
        \end{cases}
    \end{equation}
    By Dirichlet's Theorem on arithmetic progressions, choose a prime $p\equiv 1\pmod{M}$, $p>n$, and let $a_1 = p$. Let $c$ be a primitive root modulo $p$, and choose $t$ to be a prime such that $t\equiv -c\pmod{p}$, $t>p$. Then \begin{equation}
        g(x)+t \ = \ x^n - p^{q_1}x^{n-q_1} + t \equiv x^n - c\pmod{p}\mbox{.}
    \end{equation}
    Then $\operatorname{rad}(n)\mid M \mid p-1=\operatorname{ord}(c)$, and $\gcd(n,(p-1)/\operatorname{ord}(c)) = 1$, and $4\mid p-1$ if $4\mid n$. By \cite{martinez2015}, $g(x)+t$ is irreducible over $\mathbb{F}_p$, hence irreducible over $\QQ$.
    
    Note that since $q_1$ is prime, $n-q_1\neq n-1$. Now, $\gcd(nt,p^{q_1}(n-q_1)q_1)=1$ since $t\nmid p^{q_1}(n-q_1)q_1$, and $n$ cannot share a prime factor with $p^{q_1}(n-q_1)q_1$ as $a_1$ and $q_1$ are prime. Also, since $\gcd(n-q_1,v_t(t)) = \gcd(n-q_1,1)=1$, \cite{cms-trigalois} gives that \begin{equation}
    \gal(g(x)+t) \ \cong \ S_n\quad \mbox{or}\quad \gal(g(x)+t) \cong A_n\quad \mbox{over}\ \QQ\mbox{.}
    \end{equation}
    Now, consider the polynomial \begin{equation}
    h(x,Y_2,\dots,Y_k,T) \ = \ (x^{q_1}-a_1^{q_1})(x^{q_2}-Y_2^{q_2})\cdots(x^{q_k}-Y_k^{q_k}) + T \in K[x]\mbox{,}
    \end{equation}
    where $K=\mathbb{Q}(Y_2,\dots,Y_k,T)$. Then specializing to $Y_i=0$ for $i=2$, \dots, $k$ and $T=t$ gives $g(x)+t$. Since the Galois group over $\mathbb{Q}$ of the specialized $g(x)+t$ contains $A_n$, the Galois group of $h$ over $K$ should also contain $A_n$. Finally, by Lemma \ref{hilbertirreducibility}, there exist infinitely many $a_2$ such that the specialization $h(x,a_2,Y_3,\dots,Y_k,T)$ has Galois group over $\mathbb{Q}(T)$ containing $A_n$. Choose one such $a_2$. Now, we can apply Lemma \ref{hilbertirreducibility} inductively to choose $a_3$, \dots, $a_k$ such that the $a_i$ for $2\leq i\leq k$ are pairwise distinct, because we have infinitely many choices for each $a_i$. Since $a_1$ could be any sufficiently large prime, there are infinitely many $k$-tuples $(a_1,\dots,a_k)$ such that $\gal(f(x)+T)$ over $\QQ(T)$ contains $A_n$.
\end{proof}

\begin{lem} \label{low rank odd}
    Let $(q_1,\dots,q_k)$ be a $k$-tuple of positive integers such that for at least one $i$, $q_i$ is a prime not dividing $n= \sum q_i+2$. Then there exist infinitely many pairwise distinct $k+1$-tuples $(a_1,\dots,a_k,b)$ such that $\gal(f(x)+T)\cong S_n$ or $A_n$ over $\mathbb{Q}(T)$, where \begin{equation}
    f(x) \ = \ (x^{q_1}-a_1^{q_1})\cdots(x^{q_k}-a_k^{q_k})(x^2+b^2)\mbox{.}
    \end{equation}
\end{lem}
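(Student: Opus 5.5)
The plan is to mirror the proof of Lemma \ref{low rank even}, treating the extra quadratic factor $x^2+b^2$ as one more parameter that we eventually specialize by Hilbert irreducibility. Without loss of generality let $q_1$ be a prime not dividing $n=\sum q_i+2$. First I will find a single specialization with large Galois group over $\QQ$. I will set $a_i=0$ for $i\ge 2$ and also $b=0$, which gives
\begin{equation}
g(x) \ = \ (x^{q_1}-a_1^{q_1})\,x^{n-q_1} \ = \ x^n - a_1^{q_1}x^{n-q_1}\mbox{.}
\end{equation}
This is exactly the trinomial that appears in the proof of Lemma \ref{low rank even}, now with $n$ replaced by $\sum q_i+2$. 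Every hypothesis used there depends only on $n$, on $q_1$ being a prime not dividing $n$, and on the choices of $a_1$ and $t$.

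I will therefore take $M$ as before, choose a prime $p\equiv 1 \pmod M$ with $p>n$, set $a_1=p$, and pick a prime $t>p$ with $t\equiv -c\pmod p$, where $c$ is a primitive root modulo $p$. Then $g(x)+t\equiv x^n-c \pmod p$. By \cite{martinez2015} this polynomial is irreducible over $\mathbb{F}_p$, so $g(x)+t$ is irreducible over $\QQ$. The coprimality conditions $\gcd(nt,\,p^{q_1}(n-q_1)q_1)=1$ and $\gcd(n-q_1,1)=1$ hold as before, so \cite{cms-trigalois} gives that $\gal(g(x)+t)$ is $S_n$ or $A_n$. The one point to recheck is that $n-q_1\neq n-1$, which holds because $q_1$ is prime, and that no prime factor of $n$ divides $n-q_1$, which holds because $q_1\nmid n$.

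Next I will lift this to the generic polynomial
\begin{equation}
h(x,Y_2,\dots,Y_k,Z,T) \ = \ (x^{q_1}-a_1^{q_1})\prod_{i=2}^k (x^{q_i}-Y_i^{q_i})\,(x^2+Z^2)+T
\end{equation}
over $K=\QQ(Y_2,\dots,Y_k,Z,T)$. Since specializing a polynomial can only shrink its Galois group (to a subgroup, up to conjugacy), and the specialization $Y_i=0$, $Z=0$, $T=t$ has Galois group containing $A_n$, the Galois group of $h$ over $K$ contains $A_n$, and $h$ is irreducible over $K$. I will then apply Lemma \ref{hilbertirreducibility} repeatedly, keeping $T$ generic, to specialize $Y_2,\dots,Y_k$ and finally $Z=b$ one variable at a time. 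At each step there are infinitely many good values, which lets me make the $a_i$ pairwise distinct and take $b\neq 0$. Because $a_1$ can be any sufficiently large prime $p\equiv 1\pmod M$, this produces infinitely many pairwise distinct tuples $(a_1,\dots,a_k,b)$.

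The step I expect to need the most care is this iterated Hilbert specialization that leaves $T$ free. Lemma \ref{hilbertirreducibility} is stated for specializing all variables down to $\QQ$, whereas here we want to specialize only some of them and preserve the Galois group over $\QQ(T)$. I will handle this by applying Hilbert irreducibility over the Hilbertian field $\QQ(T)$, or equivalently by viewing the Hilbert set in the remaining variables. Exactly as in the preceding lemma, the conclusion is that the group contains $A_n$, hence is $S_n$ or $A_n$.
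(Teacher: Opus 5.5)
Your proposal is correct and is the argument the paper intends. The paper's proof of this lemma is just the remark that it follows as in Lemma \ref{low rank even}, and your specialization $a_i=0$ ($i\ge 2$), $b=0$ reduces to the same trinomial $x^n-p^{q_1}x^{n-q_1}+t$, now with $n=\sum q_i+2$ and $n-q_1\ge 2$, after which the same mod-$p$ irreducibility, trinomial Galois group, and iterated Hilbert specialization steps go through. Of your two suggested ways to keep $T$ generic, use the second: multivariable Hilbert irreducibility over $\QQ$, with $T$ and $x$ treated as extra variables, gives $a_i,b\in\QQ$. The first, Hilbert irreducibility over $\QQ(T)$, would only give $a_i,b\in\QQ(T)$. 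This is more careful than the paper's own appeal to Lemma \ref{hilbertirreducibility}.
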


\begin{proof}
    The proof follows similarly to Lemma \ref{low rank even}.
\end{proof}

We now prove Theorem \ref{lowrankthm}.

\begin{proof}
    Suppose $6\leq r\leq 2g-2$ is even. Then $2g-r+5$ is odd and $2g-r+5>5$. By Lemma \ref{goldbach}, there exist primes $q_1$, $q_2$, $q_3$, not all equal, such that $q_1+q_2+q_3=2g-r+5$. Without loss of generality, let $q_1\geq q_2\geq q_3$. Since \begin{equation}
    2g-r+5 \ = \ q_1+q_2+q_3 < 3q_1\mbox{,}
    \end{equation}
    if $q_1\mid 2g-r+5$, then $2g-r+5=2q_1$. However, $2g-r+5$ is odd; hence it cannot be $2q_1$. Thus, $q_1\nmid 2g-r+5$. As in Lemma \ref{low rank odd}, there exist infinitely many pairwise distinct $(r-2)$-tuples $(a_1,\dots,a_{r-3},b)$ such that $\gal(f(x)+T)$ contains $A_n$, where \begin{equation}
    f(x) \ = \ (x^{q_1}-a_1^{q_1})(x^{q_2}-a_2^{q_2})(x^{q_3}-a_3^{q_3})(x-a_4)\cdots(x-a_{r-3})(x^2+b^2)\mbox{.}
    \end{equation}
    Now, with the automorphism $\QQ(T)\xrightarrow{T\mapsto 1/T}\QQ(T)$ fixing $\QQ$, we have \begin{equation}
    \gal(f(x)T + 1) \ \cong \ \gal\left(\frac{f(x)}{T}+1\right) \ \cong \ \gal(f(x)+T) \ \cong \ S_n\ \mbox{or}\ A_n\mbox{.}
    \end{equation}
    Thus, by Lemma \ref{zarhin}, the Jacobian of $y^2 = f(x)T + 1$ is an absolutely simple abelian variety, which allows us to apply the generalized Nagao's conjecture. We now prove that $\mathcal{X}/\QQ(T)$ is a rational surface. Since $f(x)$ is not identically zero, let $T = (y^2 - 1)/f(x)$. Then \begin{equation}
    \QQ(\mathcal{X}) \ = \ \QQ(x,T,y) \ = \ \QQ\left(x,\frac{y^2-1}{f(x)},y\right) \ = \ \QQ(x,y)
    \end{equation}
    as $f(x)$ is a polynomial. Thus, $\mathcal{X}$ is $\mathbb{Q}$-rational, so Conjecture \ref{hpnagaoconj} (and Conjecture \ref{ournagaoconj}) holds for $\mathcal{X}$ and $\mathscr{X}\to\mathbb{P}_\QQ^1$. Therefore, \begin{equation}
    \lim_{N\to\infty}\frac{1}{N}\sum_{\substack{p\notin S \\ p\leq N}}-A_p(\mathscr{X})\log p \ = \ \operatorname{rank}J_\mathcal{X}(\QQ(T))\mbox{,}
    \end{equation}
    as $B$ is trivial. 
    
    We now calculate the left-hand side explicitly. Note that $\mathcal{X}$ does not have good reduction at $t=0$. For $t\neq 0,\infty$, we see that $tf(x)+1$ has $x$-degree $2g+1$, hence could never be a square in $\overline{\mathbb{F}_p}[x]$. Thus, for $t\neq 0,\infty$, we note that $y^2=tf(x)+1$ is geometrically irreducible, hence $m_{p,t}=1$. Thus, \begin{equation}
    p\cdot A_p(\mathscr{X}) \ = \ \sum_{t\in\mathbb{P}^1(\mathbb{F}_p)}a_p(\mathscr{X}_{p,t}) \ = \ a_p(\mathscr{X}_{p,0}) + a_p(\mathscr{X}_{p,\infty}) + \sum_{t=1}^{p-1}a_p(\mathcal{X}_t)\mbox{.}
    \end{equation}
    By Deligne's Weil II \cite{deligne}, $a_p(\mathscr{X}_{p,0})=O(p^{1/2})$ and $a_p(\mathscr{X}_{p,\infty})=O(p^{1/2})$. For the third term, we have
    \begingroup
    \allowdisplaybreaks
    \begin{align}
        \sum_{t=1}^{p-1}a_p(\mathcal{X}_t) \ &=\  \sum_{t=1}^{p-1}(p+1-\#\mathcal{X}_t(\mathbb{F}_p)) \notag \\
        &= \ \sum_{t=1}^{p-1}\left(p+1-\left(p+1+\sum_{x=0}^{p-1}\left(\frac{f(x)t+1}{p}\right)\right)\right) \notag \\
        &= \ -\sum_{t=1}^{p-1}\sum_{x=0}^{p-1}\left(\frac{f(x)t+1}{p}\right) \notag \\
        &= \ -\sum_{x=0}^{p-1}\sum_{t=0}^{p-1}\left(\frac{f(x)t+1}{p}\right) + p \notag \\
        &= \ -\sum_{\substack{x \bmod p \\ f(x) \equiv 0 \!\!\pmod{p}}} \sum_{t \bmod p} \left( \frac{1}{p} \right) - \sum_{\substack{x \bmod p \\ f(x) \not\equiv 0 \!\!\pmod{p}}} \sum_{t \bmod p} \left( \frac{f(x)t + 1}{p} \right) + p \notag \\
        &= \ -p \sum_{\substack{x\bmod{p} \\ f(x) \equiv 0 \!\!\pmod{p}}} 1 + p\mbox{,}
    \end{align}
    \endgroup
    as $\sum_{t\bmod p}\left(\frac{f(x)t+1}{p}\right)=0$ for a fixed $x$, by Lemma \ref{linlegendre}. Therefore, \begin{equation}
    A_p(\mathscr{X}) \ = \ 1 + O(p^{-1/2}) - \sum_{\substack{x\bmod p \\ f(x)\ \equiv \ 0\!\!\pmod{p}}}1\mbox{.}
    \end{equation}
    For a sufficiently large prime $p$, we find the number of distinct solutions to $f(x)\equiv 0\pmod{p}$. The number of solutions is $d_1+d_2+d_3+(r-6)+2$ if $-1$ is a quadratic residue mod $p$, and it is $d_1+d_2+d_3+(r-6)$ otherwise. Here, $d_i = \gcd(q_i,p-1)$. If we let $\delta(d)$ be the Dirichlet density of primes $p$ such that $\gcd(q,p-1)=d$, then
    \begin{equation}
    \delta(d) \ = \ \sum_{k \mid \frac{q}{d}} \frac{\mu(k)}{\varphi(dk)}\mbox{.}
    \end{equation}
    Lemma \ref{chebyshevavg} gives $\lim_{N\to\infty}\frac{1}{N}\sum_{p\leq N}\log p = 1$, and since $\frac{1}{N}\sum_{p\leq N}O(p^{-1/2})\log p = O\left(\frac{\log N}{\sqrt{N}}\right) = o(1)$ as $N\to\infty$, we have
    \begingroup
    \allowdisplaybreaks
    \begin{align}
    &\operatorname{rank}J_\mathcal{X}(\QQ(T)) \notag \\
    &\ =\  \lim_{N\to\infty}\frac{1}{N}\sum_{p\leq N}-A_p(\mathscr{X})\log p \notag \\
    &\ = \ -1 + \lim_{N\to\infty}\frac{1}{N} \left(\sum_{i=1}^{3}\sum_{d_i \mid q_i} \sum_{\substack{p \le N \\ \gcd(q_i, p-1) = d_i}} d_i\log p + \sum_{p\leq N}(r-6)\log p + \sum_{\substack{p\leq N \\ p\equiv 1\pmod{4}}}2\log p\right) \notag \\
    &\ = \ -1 + \lim_{N\to\infty}\frac{1}{N}\sum_{i=1}^{3}\sum_{d_i\mid q_i}d_i\sum_{\substack{p\leq N \\ \gcd(q_i,p-1)=d_i}}\log p + \lim_{N\to\infty}\frac{1}{N}(r-6)\sum_{p\leq N}\log p + \lim_{N\to\infty}\frac{2}{N}\sum_{\substack{p\leq N \\ p\equiv 1\pmod{4}}}\log p \notag \\
    &\ = \ -1 + \lim_{N\to\infty}\frac{1}{N}\sum_{i=1}^{3}\sum_{d_i\mid q_i}d_i\delta(d_i)\sum_{p\leq N}\log p + \lim_{N\to\infty}\frac{1}{N}(r-6)\sum_{p\leq N}\log p + \lim_{N\to\infty}\frac{2}{\pi(N)}\sum_{\substack{p\leq N \\ p\equiv 1\pmod{4}}} 1\mbox{.}
    \end{align}
    \endgroup
    We claim that $\sum_{d\mid q}d\delta(d) = \tau(q)$. Letting $dk=n$ gives
    \begingroup
    \allowdisplaybreaks
    \begin{align}
    \sum_{d \mid q} d \cdot \delta(d) \ &= \ \sum_{n \mid q} \sum_{d \mid n} d \cdot \frac{\mu(n/d)}{\varphi(n)} \notag \\
    &= \ \sum_{n \mid q} \frac{1}{\varphi(n)} \left( \sum_{d \mid n} d \cdot \mu(n/d) \right) \notag \\
    &= \ \sum_{n\mid q}\frac{1}{\varphi(n)}\cdot\varphi(n) \notag \\
    &= \ \sum_{n\mid q}1 \notag \\
    &= \ \tau(q)\mbox{.}
    \end{align}
    \endgroup\\
    Thus,
    \begingroup
    \allowdisplaybreaks
    \begin{align}
    &\operatorname{rank}J_\mathcal{X}(\QQ(T)) \notag \\
    &= \ -1 + \lim_{N\to\infty}\frac{1}{N}\sum_{i=1}^{3}\sum_{d_i\mid q_i}d_i\delta(d_i)\sum_{p\leq N}\log p + \lim_{N\to\infty}\frac{1}{N}(r-6)\sum_{p\leq N}\log p + \lim_{N\to\infty}\frac{2}{\pi(N)}\pi(N;4,1) \notag \\
    &= \ -1 + \sum_{i=1}^{3}\tau(q_i)\lim_{N\to\infty}\frac{1}{N}\sum_{p\leq N}\log p + (r-6)\lim_{N\to\infty}\frac{1}{N}\sum_{p\leq N}\log p + \lim_{N\to\infty}\frac{1}{\pi(N)}\cdot\frac{N}{\log N} \notag \\
    &= \ -1 + \tau(q_1) + \tau(q_2) + \tau(q_3) + (r-6) + 1 \notag \\
    &= \ r\mbox{,}
    \end{align}
    \endgroup
    as desired. Here, we used Lemma \ref{pntap} to calculate the last summand. 
    
    Now, suppose $5\leq r\leq 2g-1$ is odd. Then $2g-r+6$ is odd, and $2g-r+6>5$. By Lemma \ref{goldbach}, there exist primes $q_1$, $q_2$, $q_3$, not all equal, such that $q_1+q_2+q_3=2g-r+6$. Without loss of generality, let $q_1\geq q_2\geq q_3$. Since \begin{equation}
    2g-r+6 \ = \ q_1+q_2+q_3 < 3q_1\mbox{,}
    \end{equation}
    if $q_1\mid 2g-r+6$, then $2g-r+6=2q_1$. However, $2g-r+6$ is odd; hence it cannot be $2q_1$. Thus, $q_1$ does not divide $2g-r+6$. As in Lemma \ref{low rank even}, there exist infinitely many pairwise distinct $(r-2)$-tuples $(a_1,\dots,a_{r-2})$ such that $\gal(f(x)+T)$ contains $A_n$, where \begin{equation}
    f(x) \ = \ (x^{q_1}-a_1^{q_1})(x^{q_2}-a_2^{q_2})(x^{q_3}-a_3^{q_3})(x-a_4)\cdots(x-a_{r-2})\mbox{.}
    \end{equation}
    We see that $\deg f(x) = (2g-r+6)+(r-5) = 2g+1$. Now, by same argument as before, $\gal(f(x)T+1)$ contains $A_n$, and the same calculations give \begin{equation}
    \operatorname{rank}J_\mathcal{X}(\QQ(T)) \ = \ -1 + \tau(q_1) + \tau(q_2) + \tau(q_3) + (r-5) = r\mbox{,}
    \end{equation}
    as desired.
    
    Finally, we recall that there were infinitely many $(r-2)$-tuples, $(a_1,\dots,a_{r-3},b)$ (respectively $(a_1,\dots,a_{r-2})$). Thus, for any $5\leq r\leq 2g-1$, there exist infinitely many imaginary hyperelliptic curves of the form $\mathcal{X}:y^2=Tf(x)+1$ over $\QQ(T)$ such that $\operatorname{rank}J_\mathcal{X}(\QQ(T))=r$.
\end{proof}

Note that we excluded the case $r=2g$ and $r=2g+1$ to make sure that $2g-r+5$ (respectively $2g-r+6$) is strictly greater than $5$ to apply Lemma \ref{goldbach}.

While proving Theorem \ref{lowrankthm} for imaginary hyperelliptic curves, we used the fact that $tf(x)+1$ has odd $x$-degree, hence could never be a square in $\mathbb{F}_p[x]$. The argument does not apply to real hyperelliptic curves. Thus we need another construction.

\lowrankthmreal*

\begin{lem} \label{reallowrankgalois}
    Let $n$ be even, $(q_1,\dots,q_k)$ be a $k$-tuple of positive integers such that for at least one $i$, $q_i$ is a prime not dividing $n= \sum q_i$. Then there exist infinitely many pairwise distinct $k$-tuples $(a_1,\dots,a_k)$ such that $\gal(f(x)+T^2)\cong S_n$ or $A_n$ over $\mathbb{Q}(T)$, where \begin{equation}
    f(x) \ = \ (x^{q_1}-a_1^{q_1})(x^{q_2}-a_2^{q_2})\cdots(x^{q_k}-a_k^{q_k})\mbox{.}
    \end{equation}
\end{lem}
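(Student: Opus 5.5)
The plan is to follow the proof of Lemma \ref{low rank even} almost verbatim. The only new feature is that the parameter enters as $T^2$ rather than $T$, and this changes only the final Hilbert-irreducibility step. Take $q_1$ to be the prime with $q_1\nmid n$. Set all $a_i=0$ for $i>1$ and consider
\begin{equation}
g(x)+s^2 \ = \ x^n - a_1^{q_1}x^{n-q_1} + s^2,
\end{equation}
with $a_1,s$ to be chosen. The aim is to make the constant term $t=s^2$ a square while keeping the Galois group over $\QQ$ equal to $S_n$ or $A_n$.

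\textbf{Irreducibility.} With $M$ as before, choose a prime $p\equiv 1\pmod M$ with $p>n$, and put $a_1=p$. Then $g(x)+s^2\equiv x^n+s^2\pmod p$. In the proof of Lemma \ref{low rank even} one wants $-s^2\equiv c$ for a primitive root $c$, which is impossible because $-s^2$ is a square times $-1$. So this is the main obstacle. Two fixes seem workable.

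\emph{First option.} Keep the $T$-argument and deduce the result for $T^2$. Let $L$ be the splitting field of $f(x)+T$ over $\QQ(T)$. Then the group over $\QQ(T^{1/2})=\QQ(U)$ of $f(x)+U^2$ is $\gal(L(\sqrt T)/\QQ(\sqrt T))$. Since $[\QQ(\sqrt T):\QQ(T)]=2$, this group is a subgroup of index at most $2$ in $S_n$ or $A_n$, hence still contains $A_n$ when $n\ge5$. It therefore suffices to invoke Lemma \ref{low rank even} directly, and the Galois group of $f(x)+T^2$ over $\QQ(T)$ is either $S_n$ or $A_n$. This clean argument is the one I would use.

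\emph{Second option, as a backup.} Repeat the specialization argument with the constant term $t=s^2\ell$ for a prime $\ell$, or use Eisenstein at a prime dividing $a_1$ after a shift. I expect this route to be messier, so I would keep it only in reserve.

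\textbf{Infinitely many tuples.} Since $a_1$ ranges over infinitely many primes and Lemma \ref{low rank even} already supplies infinitely many pairwise distinct tuples $(a_1,\dots,a_k)$, the index-two argument transfers each of them. The hypothesis that $n$ is even plays no role in the Galois argument. It matters only because the curve $y^2=f(x)+T^2$ is then real hyperelliptic, with $\deg f=2g+2$. I would note that the trace $B$ and the Zarhin condition are handled exactly as before.
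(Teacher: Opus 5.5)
Your first option is correct, and it takes a genuinely different route from the paper's. The paper's proof is only the remark that the argument ``follows in the same manner as Lemma~\ref{low rank even}''. That means rerunning the specialization argument with a square constant term: irreducibility by reduction modulo $p=a_1$, then the trinomial criterion, then Hilbert irreducibility.

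You instead derive the statement formally from Lemma~\ref{low rank even}. Let $U^2=T$ and let $L$ be the splitting field of $f(x)+T$ over $\QQ(T)$. The group of $f(x)+U^2$ over $\QQ(U)$ is $\gal(L\QQ(U)/\QQ(U))\cong\gal(L/L\cap\QQ(U))$, which has index at most $2$ in $S_n$ or $A_n$. The isomorphism $\QQ(U)\to\QQ(T)$, $U\mapsto T$, identifies this group with $\gal(f(x)+T^2)$ over $\QQ(T)$. The same tuples work, so infinitude is inherited, and, as you say, the evenness of $n$ plays no role.

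Your reduction also sidesteps a genuine difficulty in the paper's route. When $4\mid n$, the choice of $M$ forces $p\equiv 1\pmod 4$. Then $-s^2$ is a square in $\mathbb{F}_p$, and $x^n+s^2=(x^{n/2}-\iota s)(x^{n/2}+\iota s)$ with $\iota^2=-1$. So the reduction-mod-$a_1$ irreducibility step cannot be copied verbatim, and your argument never needs it.

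Two small corrections:
\begin{enumerate}
\item Your claim that $-s^2$ can never be a primitive root is false when $p\equiv 3\pmod 4$; for example, $p=7$ gives $-2^2\equiv 3$, a primitive root. The obstruction is real only when $4\mid n$.
\item You do not need $n\ge 5$. $A_n$ is the unique index-$2$ subgroup of $S_n$, and $A_n$ has no subgroup of index $2$ for any $n$. This covers, for example, $n=4$ with $(q_1,q_2)=(3,1)$, which the lemma allows.
\end{enumerate}
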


\begin{lem} \label{reallowrankgalois2}
    Let $(q_1,\dots,q_k)$ be a $k$-tuple of positive integers such that for at least one $i$, $q_i$ is a prime not dividing $n= \sum q_i+2$. Then there exist infinitely many pairwise distinct $k+1$-tuples $(a_1,\dots,a_k,b)$ such that $\gal(f(x)+T^2)\cong S_n$ or $A_n$ over $\mathbb{Q}(T)$, where \begin{equation}
    f(x) \ = \ (x^{q_1}-a_1^{q_1})\cdots(x^{q_k}-a_k^{q_k})(x^2+b^2)\mbox{.}
    \end{equation}
\end{lem}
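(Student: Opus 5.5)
The plan is to deduce the statement from Lemma \ref{low rank odd} by a quadratic descent, instead of rerunning the specialization argument with $T^2$. Rerunning it would force $t^2\equiv -c\pmod p$ with $c$ a primitive root, which clashes with the congruence $p\equiv 1\pmod 4$ needed there when $4\mid n$.

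Put $S=T^2$, so that $\QQ(T)/\QQ(S)$ is a quadratic extension and $f(x)+T^2=f(x)+S$ as polynomials in $x$. By Lemma \ref{low rank odd}, there are infinitely many pairwise distinct $(k+1)$-tuples $(a_1,\dots,a_k,b)$ for which
\[
G \;:=\; \gal\big(f(x)+S\big) \ \text{over}\ \QQ(S)
\]
is $S_n$ or $A_n$, with $f(x)=(x^{q_1}-a_1^{q_1})\cdots(x^{q_k}-a_k^{q_k})(x^2+b^2)$. I would use exactly these tuples.

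Fix such a tuple, and let $L$ be the splitting field of $f(x)+S$ over $\QQ(S)$, inside a fixed algebraic closure of $\QQ(T)$. The splitting field of $f(x)+T^2$ over $\QQ(T)$ is the compositum $L\cdot\QQ(T)$. By the standard theorem on natural irrationalities,
\[
\gal\big(L\QQ(T)/\QQ(T)\big)\;\cong\;\gal\big(L/L\cap\QQ(T)\big).
\]
Since $[L\cap\QQ(T):\QQ(S)]$ divides $[\QQ(T):\QQ(S)]=2$, this is a subgroup $H\le G$ of index $1$ or $2$, acting on the roots of $f(x)+T^2$ by the same permutation representation.

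It remains to show $H\supseteq A_n$. If $[G:H]=1$, then $H=G$ and we are done. If $[G:H]=2$, then $H$ is normal of index $2$ in $G$. Here I use that $n=\sum q_i+2\ge 5$ in the applications, and in any case $n\ge 3$ suffices.
\begin{itemize}
\item If $G=S_n$, then $H$ is an index-$2$ subgroup of $S_n$. For $n\ge 2$ the only such subgroup is $A_n$, since it is the kernel of a surjection $S_n\to\{\pm1\}$, and the abelianization of $S_n$ is the sign character.
\item If $G=A_n$, index $2$ is impossible. For $n\ge 5$, $A_n$ is simple; for $n=4$, $A_4$ has no subgroup of order $6$; and $A_3$ has odd order.
\end{itemize}
Hence $H=S_n$ or $H=A_n$, which is exactly the claimed statement. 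In particular, $H$ is transitive for $n\ge 3$, so $f(x)+T^2$ is irreducible over $\QQ(T)$. The infinitude and pairwise distinctness of the tuples is inherited directly from Lemma \ref{low rank odd}.

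The only delicate point is the descent step: identifying the Galois group over the larger field $\QQ(T)$ with a subgroup of index at most $2$ of the group over $\QQ(S)$. This requires viewing $L$ and $\QQ(T)$ inside a common algebraic closure and invoking the natural-irrationality isomorphism. Once that is in place, the group theory of index-$2$ subgroups of $S_n$ and $A_n$ finishes the argument. The same argument, applied with Lemma \ref{low rank even} in place of Lemma \ref{low rank odd}, would also give Lemma \ref{reallowrankgalois}.
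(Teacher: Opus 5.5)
Your proof is correct, and it takes a different route from the paper's. The paper disposes of this lemma in one line, saying it follows ``in the same manner as Lemma \ref{low rank even}''. That is, one reruns the specialization argument with $T^2$ in place of $T$:
\begin{enumerate}
\item take $a_1=p$ and specialize the remaining parameters to $0$;
\item specialize $T$ to a rational $t$ so that the polynomial reduces modulo $p$ to an irreducible binomial $x^n+t^2$;
\item apply the trinomial Galois criterion, and lift back with Hilbert irreducibility.
\end{enumerate}

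You instead treat $\QQ(T)/\QQ(T^2)$ as a quadratic base change of the already established statement for $f(x)+T$ (Lemma \ref{low rank odd}). You then finish with natural irrationalities and the fact that a subgroup of index at most $2$ in $S_n$ or $A_n$ is again $S_n$ or $A_n$. All of these steps are sound, and you get for free that the same tuples work for $f(x)+T$ and $f(x)+T^2$ at once.

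The obstruction you point out is genuine. When $4\mid n$, the template forces $p\equiv 1\pmod 4$, so $-t^2$ is a square in $\mathbb{F}_p$. By the irreducibility criterion for binomials, $x^n+t^2$ is then reducible over $\mathbb{F}_p$ for every $t$. So the paper's sketch, read literally, cannot produce the required irreducible specialization exactly when $n=2g+2$ with $g$ odd, a case Theorem \ref{lowrankthmreal} needs. For $n\equiv 2\pmod 4$ the sketch can be repaired by taking $p\equiv 3\pmod 4$, where $-c$ is a square. Even then, the constant term now has $v_t(t^2)=2$, so the coprimality hypothesis of the trinomial criterion must be rechecked.

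Your descent is therefore both shorter and uniform in $n$. Its costs are twofold. First, it rests entirely on Lemma \ref{low rank odd}; that proof is also only sketched, but there no square condition on $t$ arises. Second, it does not produce an explicit rational specialization with large Galois group. The paper's route would give one if completed, but the lemma does not require it.
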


\begin{proof}
    The proofs for Lemmas \ref{reallowrankgalois} and \ref{reallowrankgalois2} follow in the same manner as Lemma \ref{low rank even}.
\end{proof}

\begin{lem} \label{primitive}
    Let $f\in\QQ[x]$ be monic of even degree $n=2d\geq 4$. If $\gal(f(x)+T^2/\QQ(T))$ acts primitively on the roots, then for every $t\in\overline{\mathbb{Q}}$, $f(x)+t^2$ is not a square of a polynomial over $\overline{\QQ}$.
\end{lem}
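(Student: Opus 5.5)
The plan is to argue by contradiction: a square specialization forces $f$ to be a \emph{composite} of the form $h(x)^2 - c$ with $h\in\QQ[x]$, and such a decomposition gives an explicit nontrivial block system for the Galois action on the roots of $f(x)+T^2$.

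Suppose for contradiction that $f(x)+t_0^2 = h(x)^2$ for some $t_0\in\overline{\QQ}$ and some $h\in\overline{\QQ}[x]$.

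\textbf{Step 1: $h$ can be taken rational.} Since $f$ is monic of degree $2d$, we may replace $h$ by $-h$ if necessary and assume $h$ is monic of degree $d$. Such an $h$ is uniquely determined by $f$ alone. Indeed, $h$ must equal the polynomial part of the formal expansion $\sqrt{f(x)} = x^d\sqrt{1+\cdots}$ in $\overline{\QQ}((1/x))$. Equivalently, the top $d+1$ coefficients of $h^2$ are those of $f$, and these determine the coefficients of $h$ recursively. In particular, $h$ does not depend on $t_0$. For every $\sigma\in\gal(\overline{\QQ}/\QQ)$ we have $f(x)+\sigma(t_0)^2=\sigma(h)^2$ with $\sigma(h)$ monic of degree $d$, so $\sigma(h)=h$ by uniqueness. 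Hence $h\in\QQ[x]$, and therefore $c:=t_0^2 = h(x)^2-f(x)\in\QQ$. This rationality step is the main point to get right: without it, the block system built below would only be defined over $\overline{\QQ}(T)$. The geometric monodromy group is a normal subgroup of the arithmetic one, and primitivity of the arithmetic group does not directly transfer to it.

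\textbf{Step 2: a block system.} Over $\QQ(T)$ we now have
\begin{equation*}
f(x)+T^2 \ = \ h(x)^2-(c-T^2) \ = \ \bigl(h(x)-u\bigr)\bigl(h(x)+u\bigr),\qquad u^2=c-T^2.
\end{equation*}
First, $f(x)+T^2$ is separable over $\QQ(T)$. A repeated root $\theta$ would satisfy $f'(\theta)=0$ and $T^2=-f(\theta)$. This would make $T$ algebraic over $\QQ$, which is impossible. So there are exactly $2d$ distinct roots $\theta$. Each root satisfies $h(\theta)\in\{u,-u\}$, and $u\neq 0$ since $c-T^2\neq 0$. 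Each of the two factors $h(x)\mp u$ has degree $d$, so the roots split into two sets $B_+=\{\theta: h(\theta)=u\}$ and $B_-=\{\theta: h(\theta)=-u\}$, each of size $d$. Note that $u$ lies in the splitting field, since $u=h(\theta)$ for any root $\theta$.

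Let $\sigma$ be any element of $\gal(f(x)+T^2/\QQ(T))$. Since $h$ has coefficients in $\QQ$, we get $h(\sigma\theta)=\sigma(h(\theta))$. Also $\sigma(u)=\pm u$, because $\sigma$ preserves $u^2 = c - T^2\in\QQ(T)$. Therefore $\sigma$ either fixes both $B_+$ and $B_-$ setwise or swaps them. So $\{B_+,B_-\}$ is a block system for the Galois action.

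\textbf{Step 3: contradiction with primitivity.} The blocks have size $d$, and $1<d<2d$ because $n=2d\geq 4$ gives $d\geq 2$. So this block system is nontrivial, contradicting the assumption that $\gal(f(x)+T^2/\QQ(T))$ acts primitively on the roots. Hence no specialization $f(x)+t^2$ with $t\in\overline{\QQ}$ is a square of a polynomial over $\overline{\QQ}$.
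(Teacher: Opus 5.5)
Your proof is correct and follows essentially the same route as the paper. The shared steps are: uniqueness of the monic square root $h$, Galois invariance forcing $h\in\QQ[x]$ and $t_0^2\in\QQ$, and then the two blocks $\{\alpha : h(\alpha)=\pm u\}$ with $u^2=t_0^2-T^2$, which contradict primitivity. The paper gets uniqueness from $(h-r)(h+r)$ being constant, while you get it from the recursive determination of the top coefficients. Your write-up is also slightly more careful: it covers $t_0=0$, which the paper's argument explicitly sets aside, and it checks separability and that both blocks have size $d$ with $1<d<2d$.
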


\begin{proof}
    Suppose there is a nonzero $t_0\in\overline{\QQ}$ and $h(x)\in\overline{\QQ}[x]$ such that $f(x)+t_0^2 = h(x)^2$. Furthermore, we may assume that $h$ is monic. 

    We first show that such $h$ and $t_0$ are unique (up to a factor of $-1$). Assume $f(x) = h(x)^2 - t_0^2 = r(x)^2 - t_1^2$. Then $(h(x)-r(x))(h(x)+r(x)) = t_0^2 - t_1^2$. Since the right-hand side is a constant, so is the left-hand side. Thus either $h(x)=r(x)$ or $h(x)=-r(x)$, hence unique up to a factor of $-1$.

    As any $\sigma\in\gal(\overline{\QQ}/\QQ)$ gives another decomposition of $f(x)$, we have that $h(x)\in\QQ[x]$ and $t_0\in\QQ$. Now $f(x)+T^2 = h(x)^2 - t_0^2 + T^2$. Let $\beta$ satisfy \begin{equation}
        \beta^2 \ = \ t_0^2 - T^2\mbox{.}
    \end{equation}
    Then the roots of $f(x)+T^2$ divide into two blocks \begin{equation}
        B_+ \ = \ \{\alpha\,|\,h(\alpha) \ = \ \beta\}\quad\mbox{and}\quad B_- \ = \ \{\alpha\,|\,h(\alpha)\ =\ -\beta\}\mbox{,}
    \end{equation}
    and $\gal(f(x)+T^2/\QQ(T))$ either preserves or interchanges the two blocks. This implies that $\gal(f(x)+T^2/\QQ(T))$ is imprimitive, a contradiction.
    
    Therefore, for every $t\in\overline{\QQ}^\times$, $f(x)+t^2$ is not a square of a polynomial over $\overline{\QQ}$.
\end{proof}

Now we give a proof of Theorem \ref{lowrankthmreal}.

\begin{proof}
    Suppose $6\leq r\leq 2g$ is even. Then $2g-r+7$ is odd and $2g-r+7>5$. By Lemma \ref{goldbach}, there exist primes $q_1$, $q_2$, $q_3$, not all equal, such that $q_1+q_2+q_3=2g-r+7$. As in the proof of Theorem \ref{lowrankthm}, $q_1\nmid 2g-r+7$. As in Lemma \ref{reallowrankgalois2}, there exist infinitely many pairwise distinct $(r-3)$-tuples $(a_1,\dots,a_{r-4},b)$ such that $\gal(f(x)+T^2)$ contains $A_n$, where \begin{equation}
    f(x) \ = \ (x^{q_1}-a_1^{q_1})(x^{q_2}-a_2^{q_2})(x^{q_3}-a_3^{q_3})(x-a_4)\cdots(x-a_{r-4})(x^2+b^2)\mbox{.}
    \end{equation}
    Let $\mathcal{X}:y^2 = f(x)+T^2$, and $G = \gal(f(x)+T^2/\QQ(T))$. Since $G\supset A_{2g+2}$, either $G=A_{2g+2}$ or $G=S_{2g+2}$. By \cite{zarhin1999hyperellipticjacobianscomplexmultiplication}, $J_\mathcal{X}(\QQ(T))$ is an absolutely simple abelian variety.

    We now show that the surface $\mathscr{X}$ associated to the curve $\mathcal{X}$ is $\QQ$-rational. We have \begin{equation}
        f(x) \ = \ y^2 - T^2 \ = \ (y+T)(y-T)\mbox{.}
    \end{equation}
    Let $y-T=u$. Then $y+T = f(x)/u$, giving \begin{equation}
        y \ = \ \frac{1}{2}\left(\frac{f(x)}{u} + u\right)\quad\mbox{and}\quad T \ = \ \frac{1}{2}\left(\frac{f(x)}{u} - u\right)\mbox{.}
    \end{equation}
    Therefore $\QQ(\mathscr{X}) = \QQ(x,y,T) = \QQ(x,u)$, hence $\mathscr{X}$ is $\QQ$-rational, which implies that Conjecture \ref{ournagaoconj} holds for $\mathcal{X}/\QQ(T)$ and $\mathscr{X}\to\mathbb{P}_\QQ^1$. Therefore, \begin{equation}
    \lim_{N\to\infty}\frac{1}{N}\sum_{\substack{p\notin S \\ p\leq N}}-A_p(\mathscr{X})\log p \ = \ \operatorname{rank}J_\mathcal{X}(\QQ(T))\mbox{,}
    \end{equation}
    as $B$ is trivial.
    
    Now, as Lemma \ref{reallowrankgalois2} gives that $G=A_{2g+2}$ or $G=S_{2g+2}$, $G$ is primitive since $g\geq 3$. From Lemma \ref{primitive}, for every $t\in\overline{\QQ}^\times$, $f(x)+t^2$ is not a square of a polynomial over $\overline{\QQ}$. Hence for all sufficiently large primes $p$, $f(x)+t^2$ is not a square in $\overline{\mathbb{F}_p}[x]$, hence geometrically irreducible. Therefore, for $t\neq 0$, $m_{p,t}=1$ for all sufficiently large $p$. Now we have \begin{equation}
    p\cdot A_p(\mathscr{X}) \ = \ \sum_{t\in\mathbb{P}^1(\mathbb{F}_p)}a_p(\mathscr{X}_{p,t}) \ = \ a_p(\mathscr{X}_{p,\infty}) + \sum_{t=0}^{p-1}a_p(\mathcal{X}_t)\mbox{.}
    \end{equation}
    By Deligne's Weil II \cite{deligne}, $a_p(\mathscr{X}_{p,\infty})=O(p^{1/2})$. For the second term, as $p\mid -4f(x)$ if and only if $p\mid f(x)$, we have
    \begingroup
    \allowdisplaybreaks
    \begin{align}
        \sum_{t=0}^{p-1}a_p(\mathcal{X}_t) &\ = \ \sum_{t=0}^{p-1}(p+1-\#\mathcal{X}_t(\mathbb{F}_p)) \notag \\
        &= \ \sum_{t=0}^{p-1}\left(p+1-\left(p+1+\sum_{x=0}^{p-1}\left(\frac{f(x)+t^2}{p}\right)+\left(\frac{1}{p}\right)\right)\right) \notag \\
        &= \ -\sum_{t=0}^{p-1}\left(\sum_{x=0}^{p-1}\left(\frac{f(x)+t^2}{p}\right)+\left(\frac{1}{p}\right)\right) \notag \\
        &= \ -\sum_{x=0}^{p-1}\sum_{t=0}^{p-1}\left(\frac{f(x)+t^2}{p}\right) - p \notag \\
        &= \ -\sum_{\substack{x \bmod p \\ f(x) \equiv 0 \!\!\pmod{p}}} (p-1)\left( \frac{1}{p} \right) + \sum_{\substack{x \bmod p \\ f(x) \not\equiv 0 \!\!\pmod{p}}} \left( \frac{1}{p} \right) - p \notag \\
        &= \ -p \sum_{\substack{x\bmod{p} \\ f(x) \equiv 0 \!\!\pmod{p}}} 1\mbox{,}
    \end{align}
    \endgroup
    where we used Lemma \ref{quadlegendre}. Therefore, \begin{equation}
    A_p(\mathscr{X}) \ = \ O(p^{-1/2}) - \sum_{\substack{x\bmod p \\ f(x)\equiv 0\!\!\pmod{p}}}1\mbox{.}
    \end{equation}
    The same calculation as in the proof of Theorem \ref{lowrankthm} gives that \begin{equation}
        \rank J_{\mathcal{X}}(\QQ(T)) \ = \ \tau(q_1) + \tau(q_2) + \tau(q_3) + (r-5) + 1 \ = \ r\mbox{.}
    \end{equation}

    Now, suppose $6\leq r\leq 2g+1$ is odd. Then $2g-r+8$ is odd, and $2g-r+8>5$. By Lemma \ref{goldbach}, there exist primes $q_1$, $q_2$, $q_3$, not all equal, such that $q_1+q_2+q_3=2g-r+8$. As in the proof of Theorem \ref{lowrankthm}, $q_1$ does not divide $2g-r+8$. As in Lemma \ref{reallowrankgalois}, there exist infinitely many pairwise distinct $(r-3)$-tuples $(a_1,\dots,a_{r-3})$ such that $\gal(f(x)+T^2)$ contains $A_n$, where \begin{equation}
    f(x) \ = \ (x^{q_1}-a_1^{q_1})(x^{q_2}-a_2^{q_2})(x^{q_3}-a_3^{q_3})(x-a_4)\cdots(x-a_{r-3})\mbox{.}
    \end{equation}
    Now, by the same argument as before, $\gal(f(x)+T^2)$ contains $A_n$, and the same calculations give \begin{equation}
    \operatorname{rank}J_\mathcal{X}(\QQ(T)) \ = \ \tau(q_1) + \tau(q_2) + \tau(q_3) + (r-6) = r\mbox{,}
    \end{equation}
    as desired.
    
    Finally, we recall that there were infinitely many $(r-3)$-tuples, $(a_1,\dots,a_{r-4},b)$ (respectively $(a_1,\dots,a_{r-3})$). Thus, for any $6\leq r\leq 2g+1$, there exist infinitely many real hyperelliptic curves of the form $\mathcal{X}:y^2=f(x)+T^2$ over $\QQ(T)$ such that $\operatorname{rank}J_\mathcal{X}(\QQ(T))=r$.
\end{proof}

\section{Explicit Construction of Imaginary Hyperelliptic Curves with Low-Ranks}

Theorem \ref{lowrankthm} generates infinitely many hyperelliptic curves, but does not provide an explicit hyperelliptic curve with the given rank or provide methods to verify if a given tuple satisfies our condition. 

We therefore now attempt to find a hyperelliptic curve with the given rank. We avoid using Lemma \ref{hilbertirreducibility} and instead employ a constructive proof of the conditions for \ref{zarhin}. This method covers half of the low-rank range, which also covers ranks $2g$ and $2g+1$.

\explicitthm*

We prove that this equation defines a hyperelliptic surface with our desired conditions. For this purpose, we state the following lemmas. We first cite a result from Hajdu and Herendi.

\begin{lem} \cite{HAJDU2023626} \label{hajdulemma}
    Let $\{a_k\}_{k=0}^{n}$ be a strictly increasing convex sequence of real numbers with $a_0=0$, and let \begin{equation}
    f(x) \ = \ x\prod_{k=1}^{n}(x-a_k)(x+a_k)\mbox{.}
    \end{equation}
    For each $i=0$, \dots, $n-1$, let $\alpha_i$ be the real root of $f'(x)$ in $(a_i,a_{i+1})$. Then \begin{equation}
    |f(\alpha_0)| \ < \ |f(\alpha_1)| \ < \ \cdots \ < \ |f(\alpha_{n-1})|\mbox{.}
    \end{equation}
    In particular, for any $0<t<1$ and $i=1$, \dots, $n$, \begin{equation}
    \left|\frac{f(a_i - t(a_i-a_{i-1}))}{f(a_i + t(a_i-a_{i-1}))}\right| \ < \ 1\mbox{.}
    \end{equation}
\end{lem}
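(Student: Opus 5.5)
The plan is to prove the second (``in particular'') inequality first, by comparing $|f|$ at two points symmetric about $a_i$ one root factor at a time, and then to deduce the chain $|f(\alpha_0)|<\cdots<|f(\alpha_{n-1})|$ from it by a reflection argument. Throughout, write $d_i=a_i-a_{i-1}>0$. Convexity of $\{a_k\}$ means $d_1\le d_2\le\cdots\le d_n$.

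For the second inequality, fix $1\le i\le n$ and $0<t<1$, and set $x=a_i-td_i$ and $y=a_i+td_i$, so that $x+y=2a_i$. The roots of $f$ are $\pm a_k$ ($1\le k\le n$) and $0$, and
\begin{equation}
\left|\frac{f(x)}{f(y)}\right| \ = \ \prod_{\rho}\frac{|x-\rho|}{|y-\rho|}\mbox{,}
\end{equation}
where $\rho$ runs over these roots. I sort the factors as follows.
\begin{itemize}
\item The root $\rho=a_i$ contributes $td_i/td_i=1$.
\item The roots above $y$ are exactly $a_{i+1},\dots,a_n$. This uses $a_{i+1}=a_i+d_{i+1}\ge a_i+d_i>y$, which is where convexity first enters. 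Each of these factors is $>1$.
\item The roots below $x$ are $a_{i-1},\dots,a_0=0,-a_1,\dots,-a_n$, which is $n+i$ roots. Each of these factors is $<1$.
\end{itemize}
I then pair each root above $y$ with a distinct root below $x$: pair $a_{i+j}$ with $a_{i-j}$ when $j\le i$, and with $-a_{j-i}$ when $j>i$.

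For a pair $(\rho_{\mathrm{low}},\rho_{\mathrm{high}})$ we have $x-\rho_{\mathrm{low}}>0$ and $\rho_{\mathrm{high}}-y>0$. The product of the two factors is
\begin{equation}
\frac{(x-\rho_{\mathrm{low}})(y-\rho_{\mathrm{low}})^{-1}\,(\rho_{\mathrm{high}}-x)}{\rho_{\mathrm{high}}-y}\mbox{.}
\end{equation}
It suffices that $x-\rho_{\mathrm{low}}\le \rho_{\mathrm{high}}-y$ and $\rho_{\mathrm{high}}-x\le y-\rho_{\mathrm{low}}$, and both of these are equivalent to $\rho_{\mathrm{low}}+\rho_{\mathrm{high}}\ge x+y=2a_i$.
\begin{itemize}
\item For $j\le i$, the condition reads $a_{i+j}-a_i\ge a_i-a_{i-j}$. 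This holds because both sides are sums of $j$ consecutive increments and the increments are nondecreasing.
\item For $j>i$, the condition reads $a_{i+j}-a_{j-i}\ge 2a_i$. This holds because the left side is a sum of $2i$ consecutive increments starting at index $j-i+1\ge 2$, so it is at least $d_1+\cdots+d_{2i}=a_{2i}\ge 2a_i$, again by convexity.
\end{itemize}
After pairing there remain $(n+i)-(n-i)=2i\ge 2$ unpaired roots below $x$. Each contributes a factor strictly less than $1$, which gives the strict inequality $|f(x)/f(y)|<1$.

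For the first statement, $f$ has $2n+1$ distinct real roots and $\deg f'=2n$. By Rolle's theorem $f'$ therefore has exactly one root in each gap between consecutive roots of $f$. So $\alpha_i$ is well defined, and $|f(\alpha_i)|=\max_{[a_i,a_{i+1}]}|f|$.

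Given $1\le i\le n-1$, write $\alpha_{i-1}=a_i-td_i$ with $0<t<1$. The second inequality gives $|f(\alpha_{i-1})|<|f(a_i+td_i)|$. Since $a_i+td_i<a_i+d_{i+1}=a_{i+1}$, the point $a_i+td_i$ lies in $(a_i,a_{i+1})$. Hence $|f(a_i+td_i)|\le|f(\alpha_i)|$, and the chain $|f(\alpha_0)|<\cdots<|f(\alpha_{n-1})|$ follows.

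The main obstacle is the pairing bookkeeping in the second step, specifically the case $j>i$, where the high root must be matched with a negative root. The inequality $a_{i+j}-a_{j-i}\ge 2a_i$ is exactly where both $a_0=0$ and convexity are essential. I would verify the endpoint cases $i=1$ and $i=n$ separately; in the latter case there are no roots above $y$ and the claim is immediate.
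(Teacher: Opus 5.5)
The paper does not prove this lemma; it imports it from Hajdu--Herendi, so your argument has to stand on its own. Most of it does. The sorting of the roots around $x=a_i-td_i$ and $y=a_i+td_i$ is correct, your pairing is injective, and both convexity estimates hold: $a_{i+j}+a_{i-j}\ge 2a_i$ and $a_{i+j}-a_{j-i}\ge a_{2i}\ge 2a_i$. The $2i$ leftover factors are each strictly below $1$, and the Rolle/maximum argument that deduces the chain from the pointwise bound is fine. Proving the pointwise inequality first is also the right logical order: despite the ``in particular'' in the statement, it does not follow from the chain of maxima.

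One step is wrong as written: the reduction of a single pair to the condition $\rho_{\mathrm{low}}+\rho_{\mathrm{high}}\ge x+y$. Your first sufficient inequality, $x-\rho_{\mathrm{low}}\le\rho_{\mathrm{high}}-y$, is indeed equivalent to $\rho_{\mathrm{low}}+\rho_{\mathrm{high}}\ge x+y$. The second, $\rho_{\mathrm{high}}-x\le y-\rho_{\mathrm{low}}$, is equivalent to $\rho_{\mathrm{low}}+\rho_{\mathrm{high}}\le x+y$. So the two hold together only when $\rho_{\mathrm{low}}+\rho_{\mathrm{high}}=2a_i$, which fails as soon as convexity is strict. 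For example, take $a=(0,1,3,6)$, $i=1$, $t=\tfrac12$ and the pair $(0,a_2)$: then $\rho_{\mathrm{high}}-x=\tfrac52>\tfrac32=y-\rho_{\mathrm{low}}$. Comparing the four distances termwise cannot work, because when the sum condition is strict one of the two cross-ratios exceeds $1$.

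The criterion you want is still true, and the fix is one line:
\begin{equation*}
(y-\rho_{\mathrm{low}})(\rho_{\mathrm{high}}-y)-(x-\rho_{\mathrm{low}})(\rho_{\mathrm{high}}-x) \ = \ (y-x)\bigl(\rho_{\mathrm{low}}+\rho_{\mathrm{high}}-x-y\bigr)\mbox{.}
\end{equation*}
This is $\ge 0$ because $y>x$ and $\rho_{\mathrm{low}}+\rho_{\mathrm{high}}\ge 2a_i$. Equivalently, $z\mapsto(z-\rho_{\mathrm{low}})(\rho_{\mathrm{high}}-z)$ is a concave parabola centred at $(\rho_{\mathrm{low}}+\rho_{\mathrm{high}})/2\ge a_i$, so its value at $y$ is at least its value at $x$. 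All four distances are positive, so each pair contributes a factor $\le 1$. With this replacement your proof is complete.
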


\begin{rem}
    Let $a_{-n} = -a_n$. By Rolle's Theorem, for each $i=0$, \dots, $n-1$, there is at least one root of $f'$ between $a_i$ and $a_{i+1}$ as $f(a_i) = f(a_{i+1}) = 0$, and since $f'$ has exactly $2n$ roots and is even, $\pm\alpha_0$, \dots, $\pm\alpha_{n-1}$ are precisely all the roots of $f'$.
\end{rem}

Using Hajdu and Herendi's work, we first prove the following.

\begin{lem} \label{morse}
    Let $m$ and $n$ be arbitrary nonnegative integers, and $\{a_k\}_{k=1}^{m}$, $\{b_\ell\}_{\ell=1}^{n}$ be two strictly increasing convex sequences of positive rational numbers with $a_0=b_0=0$. Then the polynomial \begin{equation}
    f(x) \ = \ x\prod_{k=1}^{m}(x^2-a_k^2)\prod_{\ell=1}^{n}(x^2+b_\ell^2)
    \end{equation}
    is Morse.
\end{lem}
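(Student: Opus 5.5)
The plan is to locate all $2(m+n)=\deg f'$ critical points of $f$ explicitly, on the real and imaginary axes, and then separate the critical values using Lemma \ref{hajdulemma}.

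Write
\begin{equation}
f(x) \ = \ p(x)\,q(x)\mbox{,}\qquad p(x) \ = \ x\prod_{k=1}^{m}(x^2-a_k^2)\mbox{,}\qquad q(x) \ = \ \prod_{\ell=1}^{n}(x^2+b_\ell^2)\mbox{.}
\end{equation}
Then $f$ is odd, so $f'$ is even of degree $2(m+n)$.

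\textbf{Real axis.} By Rolle's theorem, $f'$ has a root $\alpha_i\in(a_i,a_{i+1})$ for each $0\le i\le m-1$. Together with $-\alpha_i$, this gives $2m$ distinct real critical points.

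\textbf{Imaginary axis.} Put $x=iy$ with $y$ real. A direct computation gives
\begin{equation}
f(iy) \ = \ i(-1)^m\, y\prod_{\ell=1}^{n}(y^2-b_\ell^2)\prod_{k=1}^{m}(y^2+a_k^2) \ =: \ i(-1)^m F(y)\mbox{,}
\end{equation}
where $F$ is a real odd polynomial with real zeros $0,\pm b_1,\dots,\pm b_n$. Since $\tfrac{d}{dy}f(iy)=if'(iy)$, each real critical point $\beta$ of $F$ gives a critical point $i\beta$ of $f$. Rolle's theorem gives roots $\beta_j\in(b_j,b_{j+1})$ of $F'$ for $0\le j\le n-1$, hence $2n$ further critical points $\pm i\beta_j$.

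These $2m+2n$ points are pairwise distinct, since the real ones are nonzero and the imaginary ones are nonzero. This exhausts the roots of $f'$, so every root of $f'$ is simple. Consequently $f''(\alpha)\neq 0$ at every critical point $\alpha$, which is the first Morse condition.

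\textbf{Distinct critical values.} The values $f(\pm\alpha_i)$ are nonzero reals, since $\alpha_i$ is not a root of $f$. The values $f(\pm i\beta_j)$ are nonzero purely imaginary numbers. So no real-axis value equals an imaginary-axis value. On each axis, $f(-\alpha)=-f(\alpha)\neq f(\alpha)$, so it suffices to show
\begin{equation}
|f(\alpha_0)| \ < \ |f(\alpha_1)| \ < \ \cdots \ < \ |f(\alpha_{m-1})|
\end{equation}
and the analogous chain for $|F(\beta_j)|$.

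\textbf{Main obstacle.} Lemma \ref{hajdulemma} applies only to the bare polynomial $p$ (respectively to $y\prod_{\ell}(y^2-b_\ell^2)$). Here we have the extra factor $q$ (respectively $\prod_k(y^2+a_k^2)$), which is positive, even, and strictly increasing in $|x|$ on $[0,\infty)$. I would redo the monotonicity argument with this weight, using the ratio inequality from Lemma \ref{hajdulemma} as the engine.

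For $0<t<1$ and $1\le i\le m-1$, Lemma \ref{hajdulemma} gives
\begin{equation}
|p(a_i-t(a_i-a_{i-1}))| \ < \ |p(a_i+t(a_i-a_{i-1}))|\mbox{.}
\end{equation}
Since $q(a_i-s)<q(a_i+s)$ for $s>0$, multiplying gives the same strict inequality for $f$. Reflecting the interval $(a_{i-1},a_i)$ through $a_i$ lands inside $(a_i,a_{i+1})$, because convexity gives $a_i-a_{i-1}\le a_{i+1}-a_i$. Hence
\begin{equation}
\max_{(a_{i-1},a_i)}|f| \ < \ \max_{(a_i,a_{i+1})}|f|\mbox{.}
\end{equation}

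It remains to check that these interval maxima are attained exactly at $\alpha_{i-1}$ and $\alpha_i$. Each interval $(a_i,a_{i+1})$ contains exactly one critical point, by the count above, and $f$ vanishes at the endpoints. So $|f|$ is unimodal there, and the maximum is $|f(\alpha_i)|$, giving the chain.

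The imaginary axis is handled identically, with the roles of $a$ and $b$ swapped, using the weight $\prod_k(y^2+a_k^2)$. The degenerate cases $m=0$ or $n=0$ simply drop one axis. This proves $f$ is Morse.
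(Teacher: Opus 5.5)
Your proof is correct and follows essentially the same route as the paper: Rolle's theorem on the real axis and on the imaginary axis (via $f(iy)$), a degree count to get $2m+2n$ simple critical points, then the Hajdu--Herendi ratio inequality multiplied by the monotone even weight $q$ (respectively $\prod_k(y^2+a_k^2)$) to order the critical values, with real versus purely imaginary values separated trivially. Two small remarks. First, the sign should be $f(iy)=i(-1)^{m+n}F(y)$; this slip is harmless. Second, your explicit use of convexity to keep the reflected interval inside $(a_i,a_{i+1})$, together with the one-critical-point-per-interval count to identify each interval maximum with a critical value, is cleaner than the paper's somewhat loosely indexed version of that step.
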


\begin{proof}
    For convention, let $a_{-k} = -a_k$ and $b_{-\ell} = -b_\ell$. First, we note that $f(a_k) = 0$ for each $k\in\{-m,\dots,m\}$, so by Rolle's theorem there is at least one zero of $f'$ in each interval $(a_{k-1},a_k)$ for $k=-m+1$, \dots, $m$. Denote these zeros by $\gamma_k$.
    
    Consider the polynomial $f(ix)$. Then there is a bijection of sets \begin{equation}
    \{\mbox{roots of}\ f(x)\} \ \xrightarrow{\alpha\mapsto -i\alpha} \ \{\mbox{roots of}\ f(ix)\}\mbox{,}
    \end{equation}
    hence $-i\gamma_{-m+1}$, \dots, $-i\gamma_m$ are roots of the equation $f'(ix)$. Now, since 
    \begingroup
    \allowdisplaybreaks
    \begin{align}
        f(ix) \ &= \ ix\prod_{k=1}^{m}\big((ix)^2-a_k^2\big)\prod_{\ell=1}^{n}\big((ix)^2+b_\ell^2\big) \notag \\
        &= \ ix\cdot (-1)^m\prod_{k=1}^{m}(x^2+a_k^2)\cdot(-1)^n\prod_{\ell=1}^{n}(x^2-b_\ell^2) \notag \\
        &= \ i(-1)^{m+n}\cdot x\prod_{k=1}^{m}(x^2+a_k^2)\prod_{\ell=1}^{n}(x^2-b_\ell^2)\mbox{,}
    \end{align}
    \endgroup
    the roots of $f(ix)$ are specifically the roots of the polynomial \begin{equation}
    g(x) \ = \ x\prod_{k=1}^{m}(x^2+a_k^2)\prod_{\ell=1}^{n}(x^2-b_\ell^2)\mbox{,}
    \end{equation}
    and vice versa. Now, $g(b_\ell) = 0$ for each $\ell\in\{-n,\dots,n\}$, so by Rolle's theorem there is at least one zero of $g'$ in each interval $(b_{\ell-1},b_\ell)$ for $\ell=-n+1$, \dots, $n$. Denote these zeros by $\delta_\ell$. Then the $\delta_\ell$ are also zeros of $if'(ix)$, hence the zeros of $f'(ix)$. Therefore, by the bijection above, $i\delta_{-n+1}$, \dots, $i\delta_n$ are roots of the polynomial $f'(x)$.
    
    We now have at least $2m+2n$ distinct roots of $f'$, namely $\gamma_{-m+1}$, \dots, $\gamma_m$ and $i\delta_{-n+1}$, \dots, $i\delta_n$. Since $f'$ has $2m+2n$ roots counting multiplicity, in fact, all the roots of $f'$ are simple roots, and these are all of the roots. It follows that for any root $\alpha$ of $f'(x)$, $f''(\alpha)\neq 0$.
    
    Let $p(x) = x\prod_{k=1}^{m}(x^2-a_k^2)$ and $q(x) = \prod_{\ell=1}^{n}(x^2+b_\ell^2)$, so that $f(x)=p(x)q(x)$. By Lemma \ref{hajdulemma}, we have that for any $0<t<1$, \begin{equation}
    \left|\frac{p(a_i - t(a_i-a_{i-1}))}{p(a_i + t(a_i-a_{i-1}))}\right| \ < \ 1\mbox{.}
    \end{equation}
    Since $a_i - t(a_i-a_{i-1}) > a_{i-1} > 0$ and $q(x)$ is strictly increasing, we have \begin{equation}
    |q(a_i + t(a_i-a_{i-1}))| \ = \ q(a_i + t(a_i-a_{i-1})) \ > \ q(a_i - t(a_i-a_{i-1})) \ = \ |q(a_i - t(a_i-a_{i-1}))|\mbox{,}
    \end{equation}
    hence \begin{equation}
    \left|\frac{f(a_i - t(a_i-a_{i-1}))}{f(a_i + t(a_i-a_{i-1}))}\right| \ = \ \left|\frac{p(a_i - t(a_i-a_{i-1}))}{p(a_i + t(a_i-a_{i-1}))}\right|\cdot \left|\frac{q(a_i - t(a_i-a_{i-1}))}{q(a_i + t(a_i-a_{i-1}))}\right| \ < \ 1\mbox{.}
    \end{equation}
    For a fixed $i$, let $t = (a_i - \gamma_{i-1}) / (a_i - a_{i-1})$. Observe that $\gamma_{i-1} = a_i - t(a_i - a_{i-1})$. Then the inequality above gives \begin{equation}
    |f(\gamma_{i-1})| \ = \ |f(a_i - t(a_i - a_{i-1}))| \ < \ |f(a_i + t(a_i - a_{i-1}))| \ < \ |f(\gamma_i)|\mbox{,}
    \end{equation}
    since $a_i < a_i + t(a_i - a_{i-1}) < a_{i+1}$ and $f$ attains a local extremum at $\gamma_i$.
    
    Note that $f$ is odd, so $f'$ is even. Thus, if $\gamma_k$ is a root, then so is $-\gamma_k$. We have $-\gamma_k \in (a_{-k},a_{-k+1})$, so $-\gamma_k = \gamma_{-k+1}$, hence $f(\gamma_{-k+1}) = f(-\gamma_k) = -f(\gamma_k)$. Since $|f(\gamma_k)| \neq |f(\gamma_\ell)|$ for all positive real roots $\gamma_k\neq\gamma_\ell$ of $f'$, we have that for any real roots $\gamma_k$ and $\gamma_\ell$ of $f'(x)$, $f(\gamma_k) \neq f(\gamma_\ell)$. Note that by same logic, if $k\neq\ell$, assuming $k<\ell$ gives $g(\delta_k) \neq g(\delta_\ell)$.
    
    We finally claim that for any two roots $\alpha$ and $\beta$ of $f'(x)$, $f(\alpha)\neq f(\beta)$. Recall that the roots of $f'$ are either real or pure imaginary. The case when $\alpha$ and $\beta$ are both real is done in the previous claim. Without loss of generality, if $\alpha$ is real and $\beta$ is pure imaginary, then $f(\alpha)$ becomes real while $f(\beta)$ becomes pure imaginary, so $f(\alpha)\neq f(\beta)$.
    
    Finally, suppose $\alpha=i\delta_k$ and $\beta=i\delta_\ell$ are both pure imaginary. Then $\delta_k$ and $\delta_\ell$ are roots of $g'(x)$, so $|g(\delta_k)| \neq |g(\delta_\ell)|$. For any $\delta>0$, we have
    \begingroup
    \allowdisplaybreaks
    \begin{align}
        |f(i\delta)| \ &= \ \left|i\delta\cdot\prod_{k=1}^{m}\big((i\delta)^2-a_k^2\big)\prod_{\ell=1}^{n}\big((i\delta)^2+b_\ell^2\big)\right| \notag \\
        &= \ \left|i\delta\cdot\prod_{k=1}^{m}(-\delta^2-a_k^2)\prod_{\ell=1}^{n}(-\delta^2+b_\ell^2)\right| \notag \\
        &= \ |i(-1)^{m+n}|\cdot \left|\delta\prod_{k=1}^{m}(\delta^2+a_k^2)\prod_{\ell=1}^{n}(\delta^2-b_\ell^2)\right| \notag \\
        &= \ |g(\delta)|\mbox{,}
    \end{align}
    \endgroup
    so in fact, \begin{equation}
    |f(i\delta_k)| \ = \ |g(\delta_k)| \ \neq \ |g(\delta_\ell)| \ = \ |f(i\delta_\ell)|\mbox{,}
    \end{equation}
    hence $f(i\delta_k) \neq f(i\delta_\ell)$.
    
    Therefore, for any two roots $\alpha$ and $\beta$ of $f'(x)$, $f(\alpha)\neq f(\beta)$, thus $f(x)$ is Morse.
\end{proof}

We are now ready to prove Theorem \ref{explicitthm}.

\begin{proof}
Let $\mathcal{X}:y^2=f(x)T+1$ with $f(x) = x\prod_{k=1}^{m}(x^2-a_k^2)\prod_{\ell=1}^{n}(x^2+b_\ell^2)$. By Lemma \ref{morse}, $f(x)$ is Morse, so Lemma \ref{morselemma} gives that $\gal(f(x)+T)\cong S_{2g+1}$ over $\mathbb{Q}(T)$.

Now, with the automorphism $\QQ(T)\xrightarrow{T\mapsto 1/T}\QQ(T)$ fixing $\QQ$, we have \begin{equation}
\gal(f(x)T + 1) \ \cong \ \gal\left(\frac{f(x)}{T}+1\right) \ \cong \ \gal(f(x)+T) \ \cong \ S_{2g+1}\mbox{.}
\end{equation}
Repeating the same argument as in the proof of Theorem \ref{lowrankthm}, Conjecture \ref{hpnagaoconj} (and Conjecture \ref{ournagaoconj}) holds for $\mathcal{X}$. The same calculation also gives $A_p(\mathscr{X}) = 1 + O(p^{-1/2}) - \displaystyle\sum_{\substack{x\bmod{p} \\ f(x)\equiv 0\!\!\pmod{p}}}1$.

For sufficiently large primes $p$, $-1$ is a quadratic residue in half of the primes (if $p\equiv 1\pmod{4}$) and is not in the other half (if $p\equiv 3\pmod{4}$). If $-1$ is a quadratic residue modulo $p$, then $f(x)$ splits into $2m+2n+1$ linear factors, and if it is not, $f(x)$ splits into $2m+1$ linear factors. Thus, 
\begingroup
\allowdisplaybreaks
\begin{align}
    \operatorname{rank}J_\mathcal{X}(\QQ(T)) \ &= \ \lim_{N\to\infty}\frac{1}{N}\sum_{p\leq N}-A_p(\mathscr{X})\log p \notag \\
    &= \ \lim_{N\to\infty}\frac{1}{N}\left(\sum_{\substack{p\leq N \\ p\equiv 1\pmod{4}}}-A_p(\mathscr{X})\log p + \sum_{\substack{p\leq N \\ p\equiv 3\pmod{4}}}-A_p(\mathscr{X})\log p\right) \notag \\
    &= \ \lim_{N\to\infty}\frac{1}{N}\left(\sum_{\substack{p\leq N \\ p\equiv 1\pmod{4}}}(2m+2n)\log p + \sum_{\substack{p\leq N \\ p\equiv 3\pmod{4}}}2m\log p\right) \notag \\
    &= \ (2m+2n)\lim_{N\to\infty}\frac{1}{\pi(N)}\pi(N;4,1) + 2m\lim_{N\to\infty}\frac{1}{\pi(N)}\pi(N;4,3) \notag \\
    &= \ \frac{1}{2}(2m+2n) + \frac{1}{2}\cdot 2m \notag \\
    &= \ 2m+n\mbox{.}
\end{align}
\endgroup

Therefore, the rank of the Jacobian variety of the hyperelliptic curve $\mathcal{X}:y^2=f(x)T+1$ over $\mathbb{Q}(T)$ is $2m+n$.
\end{proof}

Indeed, we have the following corollary.

\lowrankcor*

\section{Hyperelliptic Curves with Mid-Ranks}

In \cite{19smallnagao}, the authors showed the existence of hyperelliptic curves $\mathcal{X}$ with $\operatorname{rank}J_\mathcal{X}(\QQ(T))=2g$, $2g+1$, and $4g+2$ under certain conditions. The low-rank part of our work considered ranks below $2g+1$. Our natural next step is to find hyperelliptic curves with Jacobian rank equal to or greater than $2g+1$.

The authors' construction of the hyperelliptic curve in \cite{19smallnagao} relied on quadratic Legendre sums, so naturally we investigate such curves with rank less than or equal to $4g+2$ (resp. $4g+4$ for real hyperelliptic curves) in the same manner.

\midrankthm*

\begin{lem} \label{midranklem}
    Let $n\geq3$ be a positive integer, and let $k$, $\ell$ be nonnegative integers such that $k+\ell=n$. Then for any pairwise distinct nonzero $2n$-tuple of positive rationals $(a_1,\dots,a_{2k},b_1,\dots,b_{\ell},c_1,\dots,c_\ell)$, there exists
    \begin{align}
        D(x) \ &:= \ b(x)^2 - 4xc(x) \notag \\
        &= \ (x-a_1^2)\cdots(x-a_{2k}^2)(x^2-2(b_1^2-c_1^2)x+(b_1^2+c_1^2)^2)\cdots(x^2-2(b_\ell^2-c_\ell^2)x+(b_\ell^2+c_\ell^2)^2)
    \end{align}
    for some $b(x)$ and $c(x)$ with $\deg b(x)=n$ and $\deg c(x)<n$.
\end{lem}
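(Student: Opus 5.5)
The plan is to build $b(x)$ as a truncated square root of $D(x)$ and then use the one free constant to force divisibility by $x$. We first note that the right-hand side $D(x)$ is monic of degree $2k+2\ell=2n$. We also note that its constant term is
\begin{equation*}
D(0) \ = \ \prod_{i=1}^{2k}a_i^2\prod_{j=1}^{\ell}(b_j^2+c_j^2)^2 \ = \ \Big(\prod_{i=1}^{2k}a_i\prod_{j=1}^{\ell}(b_j^2+c_j^2)\Big)^2,
\end{equation*}
which is the square of a nonzero rational, since all the entries are positive rationals. The requirement $b(x)^2-4xc(x)=D(x)$ with $\deg c<n$ is equivalent to two conditions on $b$ alone. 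First, $\deg\big(b(x)^2-D(x)\big)\leq n$. Second, $x\mid b(x)^2-D(x)$, i.e.\ $b(0)^2=D(0)$. Given both, we simply set $c(x)=\big(b(x)^2-D(x)\big)/(4x)\in\QQ[x]$, which has degree at most $n-1$.

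Step one is the truncated square root. Since $D$ is monic of even degree $2n$, the expansion $x^{n}\sqrt{x^{-2n}D(x)}=x^n(1+\cdots)$ as a power series in $1/x$ has rational coefficients. Its polynomial part $b_0(x)$ is monic of degree $n$ and satisfies $\deg(D-b_0^2)\leq n-1$. To see this concretely, one solves for the coefficients of $b_0$ from the top down: the coefficients of $x^{2n-1},\dots,x^{n}$ in $b_0^2$ are matched with those of $D$, and each step is a linear equation whose leading coefficient is $2$. So all the coefficients are rational and uniquely determined.

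Step two adjusts the constant term. We put $b(x)=b_0(x)+\lambda$ with $\lambda\in\QQ$. Then
\begin{equation*}
b^2-D \ = \ (b_0^2-D)+2\lambda b_0+\lambda^2
\end{equation*}
still has degree at most $n$, and $b$ is still monic of degree $n$. We choose $\lambda=\sqrt{D(0)}-b_0(0)$, which is rational by the computation of $D(0)$ above. This gives $b(0)^2=D(0)$, so $x$ divides $b^2-D$, and $c$ is defined as above. This completes the construction. The same argument also gives a second solution from the other sign of $\sqrt{D(0)}$.

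The main point to check is the rationality of $\sqrt{D(0)}$, because that is exactly what lets us find $\lambda\in\QQ$. It holds because each linear factor contributes $a_i^2$ to $D(0)$ and each quadratic factor contributes $(b_j^2+c_j^2)^2$. Everything else is routine coefficient matching. The hypotheses $n\geq3$ and pairwise distinctness are not needed for existence; they matter only later, for the Galois and rank arguments.
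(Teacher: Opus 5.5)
Your proposal is correct and is essentially the paper's proof: the paper also takes the unique truncated square root $S(x)$ with $\deg(S^2-D)\leq n-1$, shifts it to $b(x)=S(x)+r-S(0)$ with $r^2=D(0)$, and sets $c(x)=(b(x)^2-D(x))/(4x)$. One small correction to your closing justification: each linear factor contributes $-a_i^2$ (not $a_i^2$) to $D(0)$, so it is the evenness of the number $2k$ of linear factors that makes $D(0)$ a rational square, as your displayed formula already reflects.
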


\begin{rem}
    The pairwise distinct positive rational conditions may be dropped. These conditions are used in proving Lemmas \ref{quadtatechow}, \ref{quadnonsquare}, and Theorem \ref{midrankthm}.
\end{rem}

\begin{proof}
    Suppose $D(x)$ is of the form above, where $(a_1,\dots,a_{2k},b_1,\dots,b_\ell,c_1,\dots,c_\ell)$ is any such $2n$-tuple. Let $r^2 = D(0)$. Since $\deg_x D = 2n$, there exists a unique $S(x)\in\QQ[x]$ such that $\deg_x(S^2-D)\leq n-1$. Let \begin{equation}
    r^2 \ = \ \prod_{i=1}^{2k}a_i^2\prod_{j=1}^{\ell}(b_j^2+c_j^2)^2\mbox{.}
    \end{equation}
    Then $r^2=D(0)$. Define $b(x) = S(x) + r - S(0)$. Then $b(0) = r$, so $b(0)^2 - D(0) = 0$. Hence $x\mid b(x)^2 - D(x)$. Thus let \begin{equation}
    c(x) \ := \ \frac{b(x)^2 - D(x)}{4x}\mbox{.}
    \end{equation}
    It follows that $\deg b(x)=n$ and $\deg c(x)<n$.
\end{proof}

\begin{lem} \label{quadtatechow}
    Let $\mathscr{X}$ be the surface associated to $\mathcal{X}:y^2=xT^2+b(x)T+c(x)$, where $b(x)$ and $c(x)$ are determined above by the $2n$-tuple $(a_1,\dots,a_{2k},b_1,\dots,b_{\ell},c_1,\dots,c_\ell)$. Then $\mathscr{X}$ is $\QQ$-rational.
\end{lem}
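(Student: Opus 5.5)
The plan is to view $\mathscr{X}$ as a conic bundle over the $x$-line and show that the generic fibre is a conic over $\QQ(x)$ with a $\QQ(x)$-rational point. A smooth conic with a rational point is isomorphic to $\mathbb{P}^1$, so the function field $\QQ(\mathscr{X})=\QQ(x,T,y)$ would then be $\QQ(x)(s)$ for a single parameter $s$. This shows $\mathscr{X}$ is $\QQ$-rational, in the same way the proofs of Theorems \ref{lowrankthm} and \ref{lowrankthmreal} exhibited explicit rational parametrizations.

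First I complete the square in $T$. Multiplying $y^2=xT^2+b(x)T+c(x)$ by $x$ and using $4xc(x)=b(x)^2-D(x)$ from Lemma \ref{midranklem} gives
\begin{equation}
\Big(xT+\tfrac{b(x)}{2}\Big)^2 - x\,y^2 \ = \ \frac{D(x)}{4}\mbox{.}
\end{equation}
Setting $u=xT+b(x)/2$ is an invertible change of variables over $\QQ(x)$, since $x\neq 0$ in $\QQ(x)$. So $\QQ(x,T,y)=\QQ(x,u,y)$, and the generic fibre becomes the conic $u^2-xy^2=D(x)/4$. This says that $D(x)/4$ should be a norm from $L=\QQ(x)(\sqrt{x})$ to $\QQ(x)$.

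The key step is to exhibit $D$ explicitly as such a norm using its special factored shape. Write $N$ for the norm from $L$ to $\QQ(x)$. For the real factors, $N(\sqrt{x}-a_i)=a_i^2-x$, so $x-a_i^2=-N(\sqrt{x}-a_i)$. Since there are $2k$ such factors, an even number, the signs cancel and $\prod_{i=1}^{2k}(x-a_i^2)=N\big(\prod_i(\sqrt{x}-a_i)\big)$. For the quadratic factors, put $\alpha_j=b_j+ic_j$. Then
\begin{equation}
(\sqrt{x}-\alpha_j)(\sqrt{x}-\overline{\alpha_j}) \ = \ x-2b_j\sqrt{x}+b_j^2+c_j^2 \ \in \ L\mbox{,}
\end{equation}
and its norm is $(x+b_j^2+c_j^2)^2-4b_j^2x=x^2-2(b_j^2-c_j^2)x+(b_j^2+c_j^2)^2$, which is exactly the $j$-th quadratic factor of $D$. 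Hence $D=N(P)$ with
\begin{equation}
P \ = \ \prod_{i=1}^{2k}(\sqrt{x}-a_i)\prod_{j=1}^{\ell}\big(x-2b_j\sqrt{x}+b_j^2+c_j^2\big)\mbox{,}
\end{equation}
and $D/4=N(P/2)$. Writing $P/2=u_0+y_0\sqrt{x}$ with $u_0,y_0\in\QQ(x)$ gives the $\QQ(x)$-point $(u_0,y_0)$ on the conic.

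Finally, I parametrize the conic by lines through $(u_0,y_0)$ in the standard way: set $u=u_0+s(y-y_0)$, solve the resulting equation, which is linear in $y$ once the known root is removed, and express $u$ and $y$ as rational functions of $x$ and $s$. This gives $\QQ(x,u,y)=\QQ(x,s)$, and together with the earlier substitution, $\QQ(\mathscr{X})=\QQ(x,s)$. I expect the main obstacle to be the norm factorization above, in particular the parity of the number of linear factors $2k$ and getting the quadratic factors right. I would also check that the conic is nondegenerate, i.e.\ $D\not\equiv 0$ and $x$ is not a square in $\QQ(x)$, which is immediate, so that the parametrization indeed yields a birational map.
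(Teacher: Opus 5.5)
Your proposal is correct and follows the paper's proof: both treat $\mathscr{X}$ as a conic bundle over the $x$-line, write $D$ as a norm from $\QQ(x)(\sqrt{x})$ factor by factor (the even count $2k$ cancels the signs), and use the resulting $\QQ(x)$-point to identify the generic fibre with $\mathbb{P}^1$. The only difference is cosmetic. You complete the square and parametrize by lines explicitly, whereas the paper writes the point directly as $(y,T)=(Q/2,(P-b)/(2x))$ and cites the general fact that a smooth conic with a rational point is birational to $\mathbb{P}^1$.
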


\begin{proof}
    Consider $s=\sqrt{x}$, and consider $\QQ(x)(s)$. We claim that $D(x)=N(P(x)+sQ(x))$ for some $P(x)$, $Q(x)\in\QQ[x]$. That is, there exists $P(x)$, $Q(x)\in\QQ[x]$ such that $D(x) = P(x)^2 - x Q(x)^2$.
    
    First of all, for any $a_i$, \begin{equation}
    x-a_i^2 \ = \ -(a_i^2 - x) = -N(a_i+s)\mbox{,}
    \end{equation}
    and for all $b_j$, $c_j$, 
    \begin{align}
        x^2 - 2(b_j^2-c_j^2)x + (b_j^2+c_j^2)^2 \ &= \ (x+b_j^2+c_j^2)^2 - x\cdot 4b_j^2 \notag \\
        &= \ N((x+b_j^2+c_j^2) + 2b_js)\mbox{.}
    \end{align}
    Thus 
    \begin{align}
        D(x) \ &= \ \prod_{i=1}^{2k}(x-a_i^2)\prod_{j=1}^{\ell}(x^2 - 2(b_j^2-c_j^2)x + (b_j^2+c_j^2)^2) \notag \\
        &= \ (-1)^{2k}\prod_{i=1}^{2k}N(a_i+s)\prod_{j=1}^{\ell}N((x+b_j^2+c_j^2) + 2b_js) \notag \\
        &= \ \prod_{i=1}^{2k}N(a_i+s)\prod_{j=1}^{\ell}N((x+b_j^2+c_j^2) + 2b_js)\mbox{,}
    \end{align}
    and since norm is multiplicative, $D(x)$ is a norm of some $P(x)+sQ(x)$, where $P(x)$, $Q(x)\in\QQ[x]$. Thus if we let $y = Q(x)/2$ and $T = (P(x)-b(x))/2x$, then $(y,T)$ is a $\mathbb{Q}(x)$-rational point of $\mathcal{X}$.
    
    Since $D\neq 0$, the general conic is smooth, and a smooth conic having a rational point is $\QQ$-birational to $\mathbb{P}_\QQ^1$. Thus for some indeterminate $u$, $\QQ(\mathcal{X}) = \QQ(u)$. Therefore the function field of the associated surface is \begin{equation}
    \QQ(\mathscr{X}) \ \cong \ \QQ(x)(\mathcal{X}) \ = \ \QQ(x)(u) \ = \ \QQ(x,u)\mbox{,}
    \end{equation}
    a pure transcendental extension of transcendence degree $2$ (for a proof, see \cite[\href{https://stacks.math.columbia.edu/tag/0C6U}{Tag 0C6U}]{stacks-project}). Therefore, $\mathscr{X}$ is $\QQ$-rational.
\end{proof}

\begin{lem} \label{quadnonsquare}
    Suppose $(a_1,\dots,a_{2k},b_1,\dots,b_\ell,c_1,\dots,c_\ell)$ is chosen as in Lemma \ref{midranklem}. Let \begin{equation}
    \mathcal{R} \ = \ \{a_1,\dots,a_{2k},b_1+ic_1,b_1-ic_1,\dots,b_\ell+ic_\ell,b_\ell-ic_\ell\} \ = \ \{\alpha_1,\dots,\alpha_{2n}\}\mbox{.}
    \end{equation}
    If \begin{equation}
    \sum_{k=1}^{2n}\epsilon_k\alpha_k \ \neq \ 0\quad\mbox{for every}\quad(\epsilon_1,\dots,\epsilon_{2n}) \in \{\pm1\}^{2n}\mbox{,}
    \end{equation}
    then for any $t\in\overline{\QQ}$, $xt^2+b(x)t+c(x)$ is never a square in $\overline{\QQ}[x]$.
\end{lem}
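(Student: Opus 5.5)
The plan is to turn a square $xt^2+b(x)t+c(x)=h(x)^2$ into a factorization of $D(s^2)$ of the form $F(s)F(-s)$, where $s^2=x$. The sign condition will then be violated by reading off the $s^{2n-1}$ coefficient of $F$.

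\textbf{Step 1: complete the square.} Suppose, for contradiction, that $t\in\overline{\QQ}$ and $h(x)\in\overline{\QQ}[x]$ satisfy $xt^2+b(x)t+c(x)=h(x)^2$. Multiplying by $4x$ gives
\begin{equation*}
4x\,h(x)^2 \ = \ (2xt+b(x))^2-\big(b(x)^2-4xc(x)\big) \ = \ P(x)^2-D(x),
\end{equation*}
with $P(x)=2xt+b(x)$. Hence $D(x)=P(x)^2-x\,(2h(x))^2$, exactly the norm form from the proof of Lemma \ref{quadtatechow}.

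\textbf{Step 2: factor in $s$.} Set $x=s^2$ and $F(s)=P(s^2)+2s\,h(s^2)\in\overline{\QQ}[s]$. Then $D(s^2)=F(s)F(-s)$.

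On the other side,
\begin{equation*}
(s^2-(b_j+ic_j)^2)(s^2-(b_j-ic_j)^2) \ = \ s^4-2(b_j^2-c_j^2)s^2+(b_j^2+c_j^2)^2 .
\end{equation*}
Together with the linear factors $x-a_i^2$, this gives $D(s^2)=\prod_{k=1}^{2n}(s-\alpha_k)(s+\alpha_k)$.

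Degrees: since $\deg b=n\geq 3$, we have $\deg P=n$ and $\deg_s P(s^2)=2n$. Since $\deg h^2\leq n$, we have $\deg h\leq n/2$, so the odd part $2s\,h(s^2)$ has degree at most $n+1<2n-1$ (using $n\geq 3$). Thus $F$ has degree exactly $2n$ and its coefficient of $s^{2n-1}$ is $0$.

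\textbf{Step 3: match roots.} Write $F(s)=\lambda\prod_{j=1}^{2n}(s-\beta_j)$. Then $F(-s)=\lambda\prod_j(s+\beta_j)$, and the multisets satisfy
\begin{equation*}
\{\beta_j\}\cup\{-\beta_j\} \ = \ \{\alpha_k\}\cup\{-\alpha_k\}.
\end{equation*}
All $\alpha_k$ are nonzero: the $a_i$ are positive, and $b_j\pm ic_j\neq 0$ because $b_j,c_j$ are positive.

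Match the two multisets inductively. The root $\alpha_1$ lies in the left-hand multiset, so $\beta_j=\epsilon_1\alpha_1$ for some $j$ and some sign $\epsilon_1$. Cancel the pair $\{\beta_j,-\beta_j\}$ against $\{\alpha_1,-\alpha_1\}$; this is legitimate because $\beta_j\neq-\beta_j$. Repeating this shows $\{\beta_j\}=\{\epsilon_k\alpha_k\}$ as multisets for some $(\epsilon_k)\in\{\pm1\}^{2n}$.

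\textbf{Step 4: conclude.} The coefficient of $s^{2n-1}$ in $F$ is $-\lambda\sum_k\epsilon_k\alpha_k$. By Step 2 this coefficient is $0$, and $\lambda\neq 0$, so $\sum_k\epsilon_k\alpha_k=0$. This contradicts the hypothesis, so $xt^2+b(x)t+c(x)$ is never a square in $\overline{\QQ}[x]$. The case $t=0$ needs no separate treatment, since $P=b$ still has degree $n$.

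The main obstacle is the bookkeeping in Steps 2 and 3. Step 2 needs the odd part of $F$ to stay below degree $2n-1$, which is where $n\geq 3$ is used. Step 3 needs the multiset matching to go through without assuming the $\alpha_k$ are distinct; cancelling $\pm$ pairs one at a time, using only that each $\alpha_k$ is nonzero, handles repeated values.
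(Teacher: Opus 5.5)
Your proposal is correct and follows the paper's proof essentially step for step. Both arguments complete the square to get $D(x)=(b(x)+2xt)^2-x(2h(x))^2$, substitute $x=s^2$ to factor $D(s^2)=F(s)F(-s)$, and match the roots of $F$ with $\epsilon_k\alpha_k$; both then observe that the $s^{2n-1}$ coefficient of $F$ must vanish because its odd part has small degree. Your version is also tidier in two places: the multiset cancellation avoids the paper's appeal to $D$ being squarefree, and your bound $\deg\bigl(2s\,h(s^2)\bigr)\le n+1<2n-1$ (using $n\ge 3$) corrects the paper's off-by-one inequality $2g+1\le n$ and makes its parity case split unnecessary.
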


\begin{proof}
    Let $F(x,t) := xt^2+b(x)t+c(x)$. Suppose $F(x,t) = q(x)^2$ for some $t\in\overline{\QQ}$ and $q(x)\in\overline{\QQ}[x]$. Then we have
    \begin{align}
        D(x) \ &= \ (b(x)+2xt)^2 - 4x F(x,t) \notag \\
        &= \ (b(x)+2xt)^2 - x(2q(x))^2\mbox{.}
    \end{align}
    Let $s=\sqrt{x}$. Then $D(s^2) = A(s)A(-s)$, where $A(s) := b(s^2) + 2s^2t + 2sq(s^2)$.
    
    Notice that \begin{equation}
    D(x) \ = \ D(s^2) \ = \ \prod_{k=1}^{2n}(s+\alpha_k)(s-\alpha_k)\mbox{.}
    \end{equation}
    As $D(0)\neq 0$ and $D(x)$ is squarefree, the roots of $D(s^2)$ are $\pm\alpha_1$, \dots, $\pm\alpha_{2n}$, which are all distinct. Since $D(s^2) = A(s)A(-s)$, for every $k$, $A(s)$ should have exactly one of $\alpha_k$ or $-\alpha_k$ as a root. That is, \begin{equation}
    A(s) \ = \ \prod_{k=1}^{2n}(s-\epsilon_k\alpha_k)
    \end{equation}
    for some $(\epsilon_1,\dots,\epsilon_{2n}) \in \{\pm1\}^{2n}$. As $A(s) = b(s^2) + 2s^2t + 2sq(s^2)$, the first two terms contribute only to the even degree terms, and $2sq(s^2)$ contributes only to the odd degree terms.
    
    If $\deg_x F(x,t) = 2g+1$ for some $g\geq 1$, then \begin{equation}
    2g+1 \ = \ \deg_x F(x,t) \ = \ \deg q(x)^2 \ = \ 2\deg q(x)\mbox{,}
    \end{equation}
    which contradicts that $F(x,t) = q(x)^2$. Suppose $\deg_x F(x,t) = 2g$. Then $\deg q(x) = g$, and the term with maximal odd degree has degree $2g+1$. Since $2g+1 \leq n < 2n-1$, the coefficient of the $s^{2n-1}$ term of $A(s)$ is zero. However, the coefficient of the $s^{2n-1}$ term of $A(s)$ is $\sum_{k=1}^{2n}\epsilon_k\alpha_k$, which is never zero, a contradiction.
    
    Therefore, for any $t\in\overline{\QQ}$, $xt^2 + b(x)t + c(x)$ is never a square in $\overline{\QQ}[x]$.
\end{proof}

Now we prove Theorem \ref{midrankthm}.

\begin{proof}
    Fix $r$ such that $2g+1\leq r\leq 4g+2$. Let $k = -2g+r-1$ and $\ell = 4g-r+2$. Then $0\leq k,\ell\leq 2g+1$. By Lemma \ref{midranklem}, for any pairwise distinct nonzero $2n$-tuple $(a_1,\dots,a_{2k},b_1,\dots,b_\ell,c_1,\dots,c_\ell)$ of positive rationals such that 
    \begin{align}
        D(x) \ &:= \ b(x)^2 - 4xc(x) \notag \\
        &= \ (x-a_1^2)\cdots(x-a_{2k}^2)(x^2-2(b_1^2-c_1^2)x+(b_1^2+c_1^2)^2)\cdots(x^2-2(b_\ell^2-c_\ell^2)x+(b_\ell^2+c_\ell^2)^2)
    \end{align}
    for some $b(x)$, $c(x)$ with $\deg b(x) = n$, the surface $\mathscr{X}$ associated to $\mathcal{X}$ is $\QQ$-rational. Thus Tate's conjecture is true for $\mathscr{X}$. Furthermore, \cite{Tate1964-1966} gives $B = \operatorname{Pic}^0(\mathscr{X})/f^\ast\operatorname{Pic}^0(\mathbb{P}^1)$, where $f^\ast:\operatorname{Pic}^0(\mathbb{P}^1)\to\operatorname{Pic}^0(\mathscr{X})$ is the pullback map. But $\mathscr{X}$ is $\QQ$-rational, so $\operatorname{Pic}^0(\mathscr{X})=0$, hence $B=0$. Therefore, Conjecture \ref{ournagaoconj} holds for $\mathcal{X}$ and $\mathscr{X}\to\mathbb{P}_\QQ^1$. That is,
    \begin{equation}
    \lim_{N\to\infty}\frac{1}{N}\sum_{\substack{p\notin S \\ p\leq N}}-A_p(\mathscr{X})\log p \ = \ \operatorname{rank}J_\mathcal{X}(\QQ(T))\mbox{.}
    \end{equation}
    For each prime $p$, denote $\Delta_p\subset\mathbb{F}_p$ to be the set of bad fibers modulo $p$. Recall that for a good fiber $t$, \begin{equation}
    a_p(\mathscr{X}_{p,t}) \ = \ a_p(\mathcal{X}_t) \ = \ -\sum_{x=0}^{p-1}\left(\frac{xt^2+b(x)t+c(x)}{p}\right)\mbox{.}
    \end{equation}
    Calculating $A_p(\mathscr{X})$ for each $p$ gives
    \begin{align}
        &p\cdot A_p(\mathscr{X}) \notag \\
        &= \ \sum_{\substack{t\in\mathbb{F}_p \\ t\notin\Delta_p}}a_p(\mathscr{X}_{p,t}) + \sum_{\substack{t\in\mathbb{F}_p \\ t\in\Delta_p}}a_p(\mathscr{X}_{p,t}) + a_p(\mathscr{X}_{p,\infty}) \notag \\
        &= \ \sum_{\substack{t\in\mathbb{F}_p \\ t\notin\Delta_p}}a_p(\mathcal{X}_t) + \sum_{\substack{t\in\mathbb{F}_p \\ t\in\Delta_p}}a_p(\mathscr{X}_{p,t}) + a_p(\mathscr{X}_{p,\infty}) \notag \\
        &= \ -\sum_{t=0}^{p-1}\sum_{x=0}^{p-1}\left(\frac{xt^2+b(x)t+c(x)}{p}\right) + \sum_{\substack{t\in\mathbb{F}_p \\ t\in\Delta_p}}\sum_{x=0}^{p-1}\left(\frac{xt^2+b(x)t+c(x)}{p}\right) + \sum_{\substack{t\in\mathbb{F}_p \\ t\in\Delta_p}}a_p(\mathscr{X}_{p,t}) + a_p(\mathscr{X}_{p,\infty})\mbox{.}
    \end{align}
    By Deligne's Weil II \cite{deligne}, for all $t\in\Delta_p$ and $t=\infty$, $a_p(\mathscr{X}_{p,t}) = O(p^{1/2})$. Since there are only finitely many bad fibers, \begin{equation}
        \sum_{\substack{t\in\mathbb{F}_p \\ t\in\Delta_p}}a_p(\mathscr{X}_{p,t}) + a_p(\mathscr{X}_{p,\infty}) \ = \ O(p^{1/2})\mbox{.}
    \end{equation}
    Also, since $xt^2+b(x)t+c(x)$ is never a square in $\overline{\QQ}[x]$, for any prime $p$ and $t\in\mathbb{F}_p$, $xt^2+b(x)t+c(x)\in\mathbb{F}_p[x]$ is never a square in $\overline{\mathbb{F}_p}[x]$. Hence, the Weil bound on character sums gives \begin{equation}
        \sum_{x=0}^{p-1}\left(\frac{xt^2+b(x)t+c(x)}{p}\right) \ = \ O(p^{1/2})\mbox{,}
    \end{equation}
    hence \begin{equation}
        \sum_{\substack{t\in\mathbb{F}_p \\ t\in\Delta_p}}\sum_{x=0}^{p-1}\left(\frac{xt^2+b(x)t+c(x)}{p}\right) \ = \ O(p^{1/2})\mbox{.}
    \end{equation}
    Therefore, we have \begin{equation}
        p\cdot A_p(\mathscr{X}) \ = -\sum_{t=0}^{p-1}\sum_{x=0}^{p-1}\left(\frac{xt^2+b(x)t+c(x)}{p}\right) + O(p^{1/2})\mbox{.}
    \end{equation}
    Calculating the character sum on the right-hand side gives
    \begin{align}
        -\sum_{t=0}^{p-1}\sum_{x=0}^{p-1}\left(\frac{xt^2+b(x)t+c(x)}{p}\right) \ &= \ -\sum_{x=0}^{p-1}\sum_{t=0}^{p-1}\left(\frac{xt^2+b(x)t+c(x)}{p}\right) \notag \\
        &= \ -\sum_{\substack{x\bmod{p} \\ D(x)\equiv 0\!\!\pmod{p}}}(p-1)\left(\frac{x}{p}\right) + \sum_{\substack{x\bmod{p} \\ D(x)\not\equiv 0\!\!\pmod{p}}}\left(\frac{x}{p}\right) \notag \\
        &= \ -p\sum_{\substack{x\bmod{p} \\ D(x)\equiv 0\!\!\pmod{p}}}\left(\frac{x}{p}\right) + \sum_{x\bmod{p}}\left(\frac{x}{p}\right) \notag \\
        &= \ -p\sum_{\substack{x\bmod{p} \\ D(x)\equiv 0\!\!\pmod{p}}}\left(\frac{x}{p}\right)\mbox{.}
    \end{align}
    As the $a_i$, $b_j$, and $c_j$ are pairwise distinct positive rationals, $D(x)\equiv 0\pmod{p}$ has distinct roots modulo $p$. If $p\equiv 1\pmod{4}$, then $-1$ is a quadratic residue modulo $p$, so $D(x)$ has $2k+2\ell$ roots modulo $p$, and if $p\equiv 3\pmod{4}$, then $-1$ is not a quadratic residue modulo $p$, so $D(x)$ has $2k$ roots modulo $p$. Furthermore, these roots are of the form $a_i^2$, $(b_j+\sqrt{-1}c_j)^2$, and $(b_j-\sqrt{-1}c_j)^2$, so they are all quadratic residues. Hence
    \begin{equation}
    \sum_{\substack{x\pmod{p} \\ D(x) \equiv 0\pmod{p}}}p\left(\frac{x}{p}\right) \ = \ \begin{cases}
        (2k+2\ell)p & p\equiv 1\pmod{4} \\ 2kp & p\equiv 3\pmod{4}\mbox{,}
    \end{cases}
    \end{equation}
    and
    \begin{equation}
    - A_p(\mathscr{X}) \ = \ \begin{cases}
        2k+2\ell + O(p^{-1/2}) & p\equiv 1\pmod{4} \\ 2k + O(p^{-1/2}) & p\equiv 3\pmod{4}\mbox{.}
    \end{cases}
    \end{equation}
    Finally, Lemma \ref{chebyshevavg} gives $\lim_{N\to\infty}\frac{1}{N}\sum_{p\leq N}\log p = 1$, and $\frac{1}{N}\sum_{p\leq N}O(p^{-1/2})\log p = O\left(\frac{\log N}{\sqrt{N}}\right) = o(1)$ as $N\to\infty$. Therefore Conjecture \ref{ournagaoconj}, which is true for our $\mathcal{X}$ and $\mathscr{X}\to\mathbb{P}_\QQ^1$, gives
    \begingroup
    \allowdisplaybreaks
    \begin{align}
        \operatorname{rank}J_{\mathcal{X}}(\QQ(T)) \ &=\  \lim_{N\to\infty}\frac{1}{N}\sum_{p\leq N}-A_p(\mathscr{X})\log p \notag\\
        &= \ \lim_{N\to\infty}\frac{1}{\pi(N)}\sum_{p\leq N}-A_p(\mathscr{X}) \notag \\
        &= \ \lim_{N\to\infty}\frac{1}{\pi(N)}\left(\sum_{\substack{p\leq N \\ p\equiv 1\pmod{4}}}(2k+2\ell) + \sum_{\substack{p\leq N \\ p\equiv 3\pmod{4}}}2k\right) \notag \\
        &= \ \lim_{N\to\infty}\frac{2k+2\ell}{\pi(N)}\pi(N;4,1) + \lim_{N\to\infty}\frac{2k}{\pi(N)}\pi(N;4,3) \notag \\
        &= \ \lim_{N\to\infty}\frac{k+\ell}{\pi(N)}\cdot\frac{N}{\log N} + \lim_{N\to\infty}\frac{k}{\pi(N)}\cdot\frac{N}{\log N} \notag \\
        &= \ 2k+\ell\mbox{.}
    \end{align}
    \endgroup
    Hence $\operatorname{rank}J_\mathcal{X}(\QQ(T)) = 2k+\ell = r$, as desired. As there were infinitely many $2n$-tuples satisfying the condition, there exist infinitely many imaginary hyperelliptic curves of the form $\mathcal{X}:y^2 = xT^2 + b(x)T + c(x)$ over $\QQ(T)$ such that $\operatorname{rank}J_\mathcal{X}(\QQ(T)) = r$.
\end{proof}

\begin{rem}
    The same argument applies to real hyperelliptic curves by setting $k = -2g+r-2$ and $\ell = 4g-r+4$. Thus, for any fixed $2g+2\leq r\leq 4g+4$ there exist infinitely many real hyperelliptic curves $\mathcal{X}:y^2 = xT^2 + b(x)T + c(x)$ over $\QQ(T)$ with genus $g$ such that $\operatorname{rank}J_\mathcal{X}(\QQ(T)) = r$.
\end{rem}

\section{Low Rank Construction with Rational Points}

The canonical method of exhibiting independent points involves constructing $\QQ(T)$-points. We take a slightly more geometric approach by explicitly constructing lattice-independent divisors in the Mordell-Weil group, confirming non-vanishing of the Gram determinant of a height matrix, and applying Shioda-Tate \cite{schutt-shioda2019} to bound its rank from above.

\lowrankMW*

We consider the divisors over $\overline{\QQ}(T)$ that correspond to the roots of $f(x)$ over $\overline{\QQ}(T)$. Each one of these gives a divisor, $P_i$. We show that these are linearly independent. Then the Galois group acts on the space generated by these divisors. The divisors fixed under this action are defined over $\QQ$. If $h_i\mid f$ is irreducible and $\alpha_{i_1}, \dots, \alpha_{i_k}$ are roots over $\overline{\QQ}$, then 
\begin{equation}
    D_i \ = \ \sum_{h_i(\alpha_i)=0}P_{i}
\end{equation}
are divisors over $\QQ$ for all $i$ where the sum is taken over the roots of $h_i$.

To apply intersection theory and the Shioda-Tate formula, we let $S \to \mathbb{P}^1$ denote the smooth, proper fibration obtained via the minimal resolution of singularities. Under this resolution, the singular fiber, $C_\infty = \pi^{-1}(\infty)$, consists of $d+2$ irreducible components as seen below.

\subsection{Lower Bound}

By Theorem 6.24 in \cite{schutt-shioda2019}, we see the height pairing is given by 
\begin{equation}
    \langle P, Q\rangle \ = \ \chi + (P.O) + (Q.O) - (P.Q) -\sum_{v\in R} \text{contr}_v(P, Q).
\end{equation}
Here, $\chi$ is the Euler characteristic of the surface, $(\,.\,)$ denotes the intersection pairing, and $R$ is the set of singular fibers. When $P$ intersects  $\Theta_i$ and $Q$ intersects $\Theta_j$, we also define
\begin{equation}
    \text{contr}_v(P, Q) \ := \ \begin{cases}
        (-A^{-1}_v)_{i,j} & i,j\geq1\\
        0 & \text{otherwise}
    \end{cases}
\end{equation}
where $(A_v)_{i,j}$ is the $(i,j)$-th entry of the Gram matrix for the components $\Theta_{v,1},\cdots, \Theta_{v,m_v-1}$ at the fiber $v$ where $\Theta_{v,0}$ is the component that intersects the zero section, $O$.

For ease of calculation, let us define $O:=(\alpha_1,1)$.

Let us take $P_i$ to be $(\alpha_i,1)$ for $1\leq i \leq d$ where $d=\deg(f)=2g+1$. We will show that for $i\in \{2,\dots, d\}$, the $P_i$ are linearly independent in the Mordell-Weil group.

Now, for the singular fiber at $T=0$, we get the equation
\begin{equation}
    E_0:y^2\ =\ 1.
\end{equation}
This splits into two components, $y=1$ and $y=-1$. 
If we substitute $x=1/u$, then we can write the curve as
\begin{equation}
    y^2u^d\ =\ g(u)T+u^d
\end{equation}
where $g(u)$ is some polynomial in $u$. Then we see that $u=0$, that is, $x=\infty$, is another component.

Note that $O$ intersects $y=1$, so $\Theta_{0}$ is given by $y=1$. Then each $P_i$ only intersects the component $y=1$, that is, $\Theta_{0}$, so for all $i,j$, we have 
\begin{equation}
    \text{contr}_0(P,Q) \ = \ 0.
\end{equation}

Next, we look at the point at infinity. Let us change coordinates. We get
\begin{equation}
    Sy^2 \ = \ f(x)+S.
\end{equation}
At $S=0$, we get the components $x=\alpha_i$. After performing a weighted change of coordinates (using weights $(1,g+1,1)$ for $x,y,S$ to resolve the singularity at infinity), we let $y=v/u^{g+1}$ and $x=1/u$, and we get
\begin{equation}
    f(1/u) \ = \ \left(\frac{1}{u}\right)^d+a_{d-1}\frac{1}{u}^{d-1}+\cdots+a_0=\frac{1}{u^d}(1+a_{d-1}u+\dots+a_0u^d).
\end{equation}
Let 
\begin{equation}
    g(u) \ = \ 1+a_{d-1}u+\dots+a_0u^d.
\end{equation}
After the coordinate change, the hyperelliptic curve is of the form
\begin{equation}
    Sv^2 \ = \ g(u)u+Su^{d+1}
\end{equation}
so $u=0$, that is, $x=\infty$, is also a component (parametrized by $v$). Then we have $d$ components of the form $x=\alpha_i$. 

Finally, we can apply $y=1/w$ to the original equation to get
\begin{equation}
    S \ = \ f(x)w^2+Sw^2
\end{equation}
to see that $y=\infty$ is also a component (parametrized by $x$).

Now, we see that $x=\alpha_1$ is the only component intersecting $O$, so let us call this $\Theta_0$. Let $\Theta_1$ denote $y=\infty$ and $\Theta_i$ denote $x=\alpha_i$ for $i\geq 2$, and let $\Theta_\infty$ denote $x=\infty$. Now, we compute the Gram matrix, $A_\infty$. Note that $
(\Theta_1,\Theta_i)=1$ and $(\Theta_i,\Theta_j)=0$ for $i,j\in \{2,\dots, d,\infty\}$ with $i\neq j$. 

Then let $F_\infty$ denote the fiber at $T=\infty$. This is given by
\begin{equation}
    F_\infty \ = \ 2\Theta_1+\Theta_\infty+\Theta_0+\sum_{k=2}^{d}\Theta_k.
\end{equation}
We can smoothly move this to the fiber $F_1$. By standard intersection theory, for a fixed $j\geq 2$,
\begin{equation}
    (F_\infty,\Theta_j) \ = \ (F_1,\Theta_j).
\end{equation}
However, since points in $\Theta_j$ always have $T=\infty$, we see that this must always be 0. Therefore, 
\begin{equation}
    0 \ = \ (F_\infty,\Theta_j) \ = \ 2(\Theta_1,\Theta_j)+\sum_k (\Theta_k,\Theta_j)=2+(\Theta_j,\Theta_j),
\end{equation}
so for $j\geq 2$ and $j=\infty$, we have 
\begin{equation}
    (\Theta_j,\Theta_j) \ = \ -2.
\end{equation}
Similarly, for $j=1$, 
\begin{equation}
    (\Theta_1,\Theta_1) \ = \ -\frac{(d+1)}{2}.
\end{equation}
Thus, the Gram matrix $A_\infty$ is given by
\begin{equation}
    \begin{bmatrix}
        \frac{-1-d}{2} & 1 & 1 & \cdots & 1\\
        1 & -2 & 0 &\cdots & 0\\
        1 & 0 & -2 &\cdots & 0\\
        \vdots &\vdots&\vdots&\ddots&\vdots\\
        1 & 0 & 0 & \cdots & -2\\
    \end{bmatrix}.
\end{equation}
The inverse is given by
\begin{equation}
    \begin{bmatrix}
        -2 & -1 & -1 & \cdots & -1\\
        -1 & -1 & -1/2 &\cdots & -1/2\\
        -1 & -1/2 & -1 &\cdots & -1/2\\
        \vdots &\vdots&\vdots&\ddots&\vdots\\
        -1 & -1/2 & -1/2 & \cdots & -1\\
    \end{bmatrix}.
\end{equation}

Finally, we also have singular points when $f'(x)=0$ and $\frac{d}{dy}y^2=2y=0$. Let $\beta_i$ be a root of $f'$. Consider the curve evaluated at $(x,y)=(\beta_i,0)$:
\begin{equation}
    0 \ = \ f(\beta_i)T+1.
\end{equation}
This yields a singular fiber at $T_i:=-\frac{1}{f(\beta_i)}$. Now, the curve
\begin{equation}
    0 \ = \ f(x)T_i+1 \ = \ -\frac{f(x)}{f(\beta_i)}+1
\end{equation}
has a double root at $x=\beta_i$, giving us one irreducible component. Thus, $\text{contr}_{T_i}(P,Q)=0$ for all $P,Q$ since there is only one component.

Now, for $i,j\in \{2,\dots,d\}$, recalling that $O:=P_1$, we have
\begin{align}
    \langle P_i,P_j\rangle \ &= \ \chi+(P_1,P_i)+(P_j,P_1)-(P_i,P_j)-\frac{1}{2}=\chi-\frac{1}{2}\notag\\
    \langle P_i,P_i\rangle \ &= \ \chi+(P_1,P_i)+(P_i,P_1)-(P_i,P_i)-1\notag\\
    &= \ \chi-1-(P_i,P_i).
\end{align}
Since $(P_i,P_i)$ will be a negative integer independent of $i$, we can define $c_1:=\langle P_i,P_j\rangle=\chi-\frac{1}{2}$ and $c_0:=\langle P_i,P_i\rangle=\chi-1-(P_i,P_i)$, both of which are positive numbers. When $d\geq 2$ (which is guaranteed), we see that $c_0\neq c_1$.

Now, the Gram matrix for the height pairing on $P_2,\dots, P_d$ has $c_0$ on the diagonal and $c_1$ at all other terms. The determinant of the Gram matrix will be $ (c_0 - c_1)^{d-2}(c_0 + (d-2)c_1)$ which is invertible since $c_0\neq c_1$, and $(c_0 + (d-2)c_1)$ cannot be 0 since both summands are positive. Therefore, $P_2,\dots, P_d$ are linearly independent. 

This shows that the rank of the Mordell-Weil group over $\overline{\QQ}$ is at least $2g$.

\subsection{Calculating the Picard Number}
We write $\rho(X)$ for the arithmetic Picard number of the hyperelliptic surface $X/\QQ$. The rank of the N\'eron-Severi group over $\CC$ we term the \textit{geometric Picard number}, which can be characterized with classical Hodge theory. Note that the rational $NS$ group is a subgroup of $NS$ over $\CC$.

A generalized Shioda-Tate \cite{Shioda_1999} gives us the equality: \begin{equation}
    \rank(\mathcal J(X)/\QQ(T)) \ =\ \rank(NS(S))-2-\sum_{v\in R}(m_v-1),
\end{equation} and we want an upper bound on $\rho(X)$, the N\'eron-Severi rank. 

Now, by Chapter 4 of \cite{schutt-shioda2019}, we have Noether's formula
\begin{equation}
    12\chi(S) \ = \ e(S)+K_S^2
\end{equation}
where $\chi$ is the Euler characteristic, $e$ is the Euler-Poincar\'e characteristic and $K_S$ is the intersection number of the canonical divisor.

Note that $S$ can be rewritten as 
\begin{equation}
    T \ = \ \frac{y^2-1}{f(x)},
\end{equation}
so $S$ is birationally equivalent to $\PP_{\overline{\QQ}}^2$. It is also stated in \cite{schutt-shioda2019} that a smooth birational surface is obtained by a finite sequence of blow-ups from $\PP^2$ or a Hirzebruch surface.

Now, for $\PP^2$ we have $K_S^2=9$ and $\rho=1$. For the Hirzebruch surfaces, we have $K_S^2=8$ and $\rho=2$. This is given in \cite{hartshorne1977}. After a blow-up, $\rho$ increases by $1$ and $K_S^2$ decreases by 1. Thus, for smooth rational surfaces, we have $\rho+K_S^2=10$. Additionally, \cite{schutt-shioda2019} also gives that
\begin{equation}
    \chi(S) \ = \ 1-q(S)+p_g(S).
\end{equation}
These are dimensions of cohomology groups which remain invariant under birational morphisms. Thus, $\chi(S)=\chi(\PP^2)=1$. Putting all of this together yields
\begin{equation}
    \rho(S) \ = \ 10-K_S^2 \ = \ 10-(12\chi-e(S)) \ = \ e(S)-2.
\end{equation}

Thus, it suffices to compute $e(S)$. 

For a fibration $S\to \PP^1$, we have
\begin{equation}
    e(S) \ = \ e(\PP^1)e(F) + \sum_{v}(e(F_v)- e(F))
\end{equation}
where the sum is taken over the singular fibers, and $F$ is a general (non-degenerate) fiber (see page 137 of \cite{iskovskikh-shafarevich1996}).

The fibration considered here is 
\begin{align}
    S \ &\to \  \PP^1\\
    (x,y, T)\ &\mapsto \ T.\notag
\end{align}
Note that $S$ is the minimal resolution, so it is smooth.

We have $e(\PP^1)=2$ and $e(F)=2-2g$. Now, for $T=0$, the fiber curve, $C_0$, consists of three curves, $y+1=0$, $y-1=0$, and $1/x=0$ meeting at two nodes. When we normalize, we get the disjoint union of three curves, $\tilde{C_0}=C_1\sqcup C_2\sqcup C_3$ where each curve has genus 0. Then the Euler characteristic of $C_0$ is $e(\tilde{C_0})$ minus the number of nodes.
\begin{equation}
    e(C_0) \ = \ e(\tilde{C_0})-2 \ = \ \sum_{i=1}^3(2-2g_i)-2=4.
\end{equation}

Similarly, for $T=\infty$, the fiber $C_\infty$ consists of $d+2$ components of genus 0 meeting at $d+1$ points, giving us $e(C_\infty)=d+3$.

For $T_i=\beta_i$, the fiber $C_i$ is a curve of genus $g$ meeting itself at one node. Normalizing reduces the genus by 1, giving us 
\begin{equation}
    e(C_i) \ = \ e(\tilde{C_i})-1=2-2(g-1)-1=3-2g.
\end{equation}
Putting everything together gives us that
\begin{align}
    e(S) \ &= \ e(\PP^1)e(F) + \sum_{v}(e(F_v)- e(F))\notag\\
    \  &= 2(2-2g)+(4-(2-2g))+(d+3-(2-2g))+(d-1)(3-2g-(2-2g)) \notag\\
    \  &= 8+4g\notag\\
    \  &= 2d+6.
\end{align}
Then we have that 
\begin{equation}
    \rho(S) \ = \ e(S)-2 \ = \ 2d+4.
\end{equation}
Finally, 
\begin{align}
    \rank(\mathcal{J}(X)/\QQ(T)) \ &= \ \rank(NS(S))-2-\sum_v(m_v-1)\notag\\
    \ &=\rho(S)-2-\sum_{v}(m_v-1) \notag\\
    \ &=2d+2-(m_0-1)-(m_\infty-1)-\sum_{i=1}^{d-1}(m_i-1) \notag \\
    \ &=2d+2-(3-1)-(d+2-1)-\sum_{i=1}^{d-1}0 \notag \\
    \ &= d-1 \notag\\
    \ &= 2g.
\end{align}

This shows us that the rank of the Mordell-Weil group over $\overline{\QQ}$ is bounded above by $2g$. 

\subsection{Conclusion}
We conclude by proving Theorem \ref{lowrankMW}.
\begin{proof}
We have shown that the rank of the Mordell-Weil group over $\overline{\QQ}$ is $2g$, generated by $P_2,\dots, P_d$.

Now, recall that we have 
\begin{equation}
      f(x) \  = \  x\prod_{k=1}^{m}(x^2-a_k^2)\prod_{\ell=1}^{n}(x^2+b_\ell^2),
\end{equation}
where $\{a_k\}_{k=1}^{m}$ and $\{b_\ell\}_{\ell=1}^{n}$ are strictly increasing convex sequences.

Letting the root $x=0$ correspond to $P_1$, we get $2g$ linearly independent divisors over $\overline{\QQ}$. For each irreducible factor, $h_i(x)\neq x$, we get a divisor over $\QQ$ given by
\begin{equation}
    D_i \ =\ \sum_{h_i(\alpha_i)=0}P_{i}
\end{equation}
where the sum is taken over the roots of $h_i$. Note that these are also linearly independent.

Since we know the Mordell-Weil group over $\overline{\QQ}$ is generated by $P_2,\dots, P_d$, we see that these divisors generate the subspace fixed by the action of $\gal(\overline{\QQ}/\QQ)$. Therefore, we conclude that the rank of the Mordell-Weil group over $\QQ$ is the number of irreducible factors of $f$, which is given by $r$.
\end{proof}

\section{Mid Rank Construction with Rational Points}
We repeat a similar process to calculate the rank for mid-level ranks.

\midrankMW*

\begin{rem}
    Theorem \ref{midrankMW} has an analogous version for real hyperelliptic curves, where $n=2g+2$, $2g+2 \leq r \leq 4g+4$, $k=-2g+r-2$ and $\ell = 4g-r+4$. The proof uses the same argument with simply the numbers changed.
\end{rem}

\subsection{Lower Bound}
Let us find rational points. Let $\alpha_i$ denote a root of $D(x)$. We can construct $D$ so that $\alpha_i\neq 0$. Note that $b(\alpha_i)^2-4\alpha_ic(\alpha_i)=0$, so we can write
\begin{equation}
    y^2 \ =\ \alpha_i\left(T+\frac{b(\alpha_i)}{2\alpha_i}\right)^2.
\end{equation}
We let 
\begin{equation}
    P_i \ := \ \left(\alpha_i,\sqrt{\alpha_i}
    \left(T+\frac{b(\alpha_i)}{2\alpha_i}\right)\right).
\end{equation}

To apply the weighted change of coordinates to examine the behavior as $x\to\infty$ and $y\to \infty$, we set $x=1/u$ and $y=v/u^{g+1}$. The curve is now in the form $v^2 = ug(T,u)$ where $g(T,u)$ is some polynomial.

Thus, at $u=0$, we get a rational point: $v = 0$;
let us call this point $P_\infty$. Let $O:=P_\infty$.

Now, we compute the intersection pairing. 
There is a singular fiber at $T=\infty$ where we get the equation
\begin{equation}
    s^2y^2 \ = \ x+sb(x)+s^2c(x),
\end{equation}
so $s=0$ yields one irreducible component. This means $m_\infty=1$.

At values of $T$ where the discriminant vanishes, say $T=T_i$, we have the curve
\begin{equation}
    y^2 \ = \ \frac{(2xT_i+b(x))^2-D(x)}{4x}.
\end{equation}
We note that the right-hand side is not a perfect square by Lemma \ref{quadnonsquare}. Thus, we see that $m_i=1$.

At every singular fiber, there is only one irreducible component, so
\begin{align}
    \langle P_i,P_j\rangle \ &= \ \chi+(P_i,P_\infty)+(P_j,P_\infty)-(P_i,P_j)-\sum_v\text{contr}_v(P_i,P_j)\notag \\
    &= \  \chi+0+0+0-0\notag\\
    &= \ \chi,
\end{align}
and
\begin{align}
    \langle P_i,P_i\rangle \ &= \ \chi+( P_i,P_\infty)+(P_i,P_\infty)-(P_i,P_i)-\sum_v\text{contr}_v(P_i,P_i)\notag \\
    &= \  \chi-(P_i,P_i).
\end{align}
To compute $(P_i,P_i)$, we define
\begin{equation}
    P_i^- \ := \ \left(\alpha_i,-\sqrt{\alpha_i}\left(T+\frac{b(\alpha_i)}{2\alpha_i}\right)\right)
\end{equation}
which is another rational point. 

We can also consider $S$ as a conic fibration over $x$. Then letting $F_i$ denote the fiber $x=\alpha_i$ of the conic fibration, we see that $(F_i,F_i)=0$ and that $F_i=P_i+P^-_i$. Additionally, $(P_i,P^-_i)=1$. By symmetry, $(P_i,P_i)=(P^-_i,P^-_i)$, so
\begin{align}
    0 \ &= \ (F_i,F_i)\notag\\
    &= \ (P_i,P_i)+2(P_i,P^-_i)+(P^-_i,P^-_i)\notag\\
    &= \ 2(P_i,P_i)+2.
\end{align}
Therefore, $(P_i,P_i)=-1$.

We write the height pairing matrix as 
\begin{equation}
    \begin{bmatrix}
        \chi+1 & \chi & \cdots & \chi\\
        \chi &\chi+1&\ddots& \vdots\\
        \vdots&\ddots&\ddots& \chi\\
        \chi & \cdots & \chi & \chi+1
       
    \end{bmatrix}
\end{equation}
which we see is invertible since $\chi=1$ for rational surfaces. Thus, $P_1,\dots, P_{4g+2}$ are linearly independent.

\subsection{Calculating the Picard Number}
We fiber over $x$, noting that the surface is now a conic bundle.

\noindent
For a smooth conic bundle, the topological Euler characteristic is 
\begin{equation}
    e(S) \ = \ 4+n
\end{equation}
where $n$ is the number of degenerate fibers \cite{hassett2026moduli}. Since $D(x)$ is the discriminant, there is a singular fiber whenever $D(x)=0$. Thus, we have $4g+2$ degenerate fibers and 
\begin{equation}
    e(S) \ = \ 4+4g+2=4g+6. 
\end{equation}
Note that $S$ is rational. To see this, we apply Castelnuovo's criterion. Since the genus is 0, $q=0$. Then a global section of $2K_S$, where $K_S$ is the canonical bundle, will restrict to the fiber $F\cong \PP^1$ where $2K_S$ has no global sections.
Since $S$ is rational, we can apply the argument from the previous section to conclude that
\begin{equation}
    \rho(S) \ = \ 4g+4.
\end{equation}

\noindent
Recall that at each fiber we had $m_v=1$.
Therefore, 
\begin{align}
    \rank(\mathcal{J}(\mathcal{X})/\QQ(T)) \ &= \ \rho(S)-2-\sum_v(m_v-1)\notag\\
    &= \  4g+4-2-0\notag\\
    &= \  4g+2.
\end{align}

\subsection{Conclusion}
Now, we prove Theorem \ref{midrankMW}.
\begin{proof}
Note that $P_1,\dots, P_{4g+2}$ form a basis for the Mordell-Weil group over $\overline{\QQ}$. Over $\QQ$, we see that $D(x)$ factors into $r$ irreducible factors. Each one of these factors, $h_i(x)$, defines a divisor
\begin{equation}
    D_i \ = \ \sum_{h_i(\alpha_i)=0}P_{i}
\end{equation}
where the sum is taken over the roots of $h_i$. Note that these are linearly independent and span the subspace fixed under the action of $\gal(\overline{\QQ}/\QQ)$.
\end{proof}

\section*{Acknowledgments}
This research was supported with funding from the National Science Foundation (grant DMS2341670), the University of Chicago, the University of Michigan, and Williams College. The authors are also grateful for the support from Texas A\&M University.

\newpage
\nocite{*}
\printbibliography

\end{document}